\newtheorem{theorem}{Theorem}[section]
\newtheorem{corollary}[theorem]{Corollary}
\newtheorem{lemma}[theorem]{Lemma}
\newtheorem{proposition}[theorem]{Proposition}
\newtheorem{definition}[theorem]{Definition}
\theoremstyle{definition}
\newtheorem{remark}[theorem]{Remark}
\newtheorem{example}[theorem]{Example}
\title[Extensibility and denseness of periodic semigroup actions]{Extensibility and denseness of periodic semigroup actions}
\author{Raimundo Briceño}
\author{Álvaro Bustos-Gajardo}
\author{Miguel Donoso-Echenique}
\address{Facultad de Matem\'aticas, Pontificia Universidad Cat\'olica de Chile. Santiago, Chile}
\email{\{raimundo.briceno, abustog, miguel.donosoe\}@uc.cl}
\subjclass[2010]{Primary 22D40, 37A15, 20M05; Secondary 20M30, 37B10, 60J10.}
 \date{}
\newcommand{\R}{\mathbb{R}}
\newcommand{\N}{\mathbb{N}}
\newcommand{\Z}{\mathbb{Z}}
\newcommand{\F}{\mathbb{F}}
\newcommand{\C}{\mathbb{C}}
\newcommand{\acts}{\curvearrowright}
\newcommand{\Ext}{\operatorname{Ext}}
\newcommand{\im}{\operatorname{im}}
\newcommand{\supp}{\operatorname{supp}}
\newcommand{\Per}{\mathcal{P}}
\newcommand{\Perg}{\mathcal{P}^{\mathrm{erg}}}
\newcommand{\sgmorph}{\theta}
\newcommand{\grmorph}{\theta}
\newcommand{\G}{\mathbf{G}}
\newcommand{\Periodic}{\operatorname{Per}}
\newcommand{\receivingSgroup}{\mathbf{G}}
\newcommand{\freeSgroup}{\bm{\Gamma}}
\newcommand{\freegroup}{\mathbf{F}}
\newcommand{\GrRightFrac}[1]{\freeSgroup_{#1}}
\newcommand{\genextmap}{\tau}
\newcommand{\GextensionSet}{\hat{X}}
\begin{document}

\begin{abstract}
We study periodic points and finitely supported invariant measures for continuous semigroup actions. Introducing suitable notions of periodicity in both topological and measure-theoretical contexts, we analyze the space of invariant Borel probability measures associated with these actions. For embeddable semigroups, we establish a direct relationship between the extensibility of invariant measures to the free group on the semigroup and the denseness of finitely supported invariant measures. Applying this framework to shift actions on the full shift, we prove that finitely supported invariant measures are dense for every left amenable semigroup that is residually a finite group and for every finite-rank free semigroup.
\end{abstract}

\keywords{Countable semigroup; embeddable semigroup; natural extension; free semigroup; periodic measure; Markov tree chain}

\maketitle
\setcounter{tocdepth}{2}
\tableofcontents

\section*{Introduction}

The study of periodic orbits is a fundamental aspect of the qualitative theory of dynamical systems. In particular, the question of whether periodic points are dense in a given system is relevant to topological and smooth dynamics \cite{artin1965periodic,devaney1986introduction}. The measure-theoretical analogue of this problem is whether periodic measures are dense in the set of invariant measures, a topic that has been extensively addressed in classical ergodic theory \cite{ruelle1973,sigmund1974}.

In the context of group actions, a concrete formulation of the first question is whether, for a countable group $G$ acting continuously on a compact metric space $X$, the set of periodic points $\Periodic(X,G)$ is dense in $X$. In this setting, the concept of periodicity must be adapted to the group framework, and the potential denseness of periodic points imposes algebraic constraints on $G$, specifically requiring it to be \emph{residually finite}. Moreover, denseness of periodic points characterizes residual finiteness in the following sense: a countable group $G$ is residually finite if and only if $\Periodic(X,G)$ is dense in $\mathcal{A}^G$ for the shift action \cite[Theorem 2.7.1]{cellular_automata}.

More recently, interest has grown \cite{kechris2020weak,kechris2012weakcontainment,ren2018,shriver2023free} in the measure-theoretical analogue of this characterization: determining for which countable groups $G$ periodic measures are weak-* dense in the space of $G$-invariant Borel probability measures $\mathcal{M}_G(\mathcal{A}^G)$. This leads to a natural dichotomy among countable groups, distinguishing those for which periodic measures are weak-* dense from those for which they are not. The groups for which periodic measures are dense are said to have the \emph{\textsc{pa} property}, and they are necessarily residually finite. However, not every residually finite group has the \textsc{pa} property. 

We may further ask if this still holds true for the subset of ergodic periodic measures; the groups with this property will be said to have the \emph{\textsc{epa} property}. Clearly, the \textsc{epa} property implies the \textsc{pa} property, but it is not known if the converse implication holds in general. It is known that all amenable, residually finite groups have the \textsc{epa} property (see \cite{ren2018}, where denseness is established for more general systems with specification). On the other hand, Bowen showed that free groups on finitely many generators have the \textsc{pa} property \cite{bowen2003periodicity}. However, the construction outlined in this work does not suffice to conclude that these groups have the \textsc{epa} property.

Notice that these properties are intrinsic to the group, as the action is prescribed once the group is set. This is not the only case where the structure of the space of invariant measures tell us something about the algebraic properties of the acting group (e.g., see \cite{glasner1997}).

In this work, we aim to extend these questions and results to the more general setting of semigroup actions. To do so, we first need to examine the notion of periodicity, which is more subtle in the semigroup setting than in the group case, as pre-periodic behavior that is not periodic appears. With the appropriate definitions in place, our first result is as follows.

\begin{restatable}{thm}{firstproposition}\label{thm:A}
		Let $S$ be a left reductive semigroup. Then:
		\begin{enumerate}
			\item[\textup{(i)}] $S$ is residually a finite semigroup if and only if the set of pre-periodic points of $\mathcal{A}^S$ is dense in $\mathcal{A}^S$ for every finite alphabet $\mathcal{A}$.
			\item[\textup{(ii)}] $S$ is residually a finite group if and only if the set of periodic points of $\mathcal{A}^S$ is dense in $\mathcal{A}^S$ for every finite alphabet $\mathcal{A}$.
		\end{enumerate}
\end{restatable}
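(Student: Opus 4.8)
The plan is to establish each of the two equivalences by the two standard mechanisms, adapted from the group case \cite[Theorem~2.7.1]{cellular_automata} so as to respect the one-sidedness of a semigroup action. The central device is the following: for a configuration $x\in\mathcal{A}^S$, with the shift action written $(s\cdot x)(t)=x(ts)$ and orbit $S^1x=\{s\cdot x: s\in S^1\}$ (with $1\cdot x=x$), define the relation $u\mathrel{\mu_x}v$ on $S^1$ by $(uw)\cdot x=(vw)\cdot x$ for every $w\in S^1$. A direct check shows that $\mu_x$ is a two-sided congruence, that $S^1/\mu_x$ acts faithfully on $S^1x$ via $\bar s\cdot(w\cdot x)=(sw)\cdot x$, and hence that $\mu_x$ has finite index whenever $S^1x$ is finite (the faithful action embeds $S^1/\mu_x$ into the finite monoid of self-maps of $S^1x$). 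Moreover, when $x$ is periodic the defining property forces $S$ to act on the finite orbit $S^1x$ by bijections, so this faithful action takes values in $\operatorname{Sym}(S^1x)$, exhibiting $S^1/\mu_x$ as a finite subsemigroup of a group, hence as a \emph{finite group}.

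\emph{From the residual property to denseness.} Fix a finite alphabet $\mathcal{A}$ (the case $|\mathcal{A}|\le 1$ being trivial) and a cylinder given by a pattern $p\colon F\to\mathcal{A}$ with $F\subseteq S$ finite. Using residual finiteness, pick for each pair of distinct elements of $F$ a homomorphism onto a finite semigroup (resp.\ finite group) separating them, and let $\phi\colon S\to T$ be the product of these homomorphisms; then $T$ is finite and $\phi|_F$ is injective. Put $x(s):=\tilde p(\phi(s))$, where $\tilde p\colon T\to\mathcal{A}$ is any extension of the pushforward of $p$ along $\phi|_F$. Then $x$ lies in the cylinder, and since $s\cdot x$ depends on $s$ only through $\phi(s)$, the orbit $S^1x$ is finite, so $x$ is pre-periodic. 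In the group case, replace $T$ by $\phi(S)$, a finite subsemigroup of a group and hence a subgroup, so that $\phi$ may be taken onto; then $S$ acts on the finite orbit $\{x_g:g\in T\}$ through $T$ acting on itself by left translations, i.e.\ by bijections, and $x$ is periodic. Thus pre-periodic (resp.\ periodic) points are dense in $\mathcal{A}^S$.

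\emph{From denseness to the residual property.} This is where left reductivity enters. Let $u\ne v$ in $S$; left reductivity yields $t\in S$ with $tu\ne tv$. Fixing $a_0\ne a_1$ in $\mathcal{A}$ (permissible since denseness is assumed for every finite alphabet, in particular a two-letter one), the configuration $c$ equal to $a_1$ at $tu$ and to $a_0$ everywhere else satisfies $u\cdot c\ne v\cdot c$, and so does any configuration agreeing with $c$ on $\{tu,tv\}$. By denseness choose such a configuration $c'$ that is pre-periodic (resp.\ periodic). Then $(u,v)\notin\mu_{c'}$, while $S^1c'$ is finite, so $\mu_{c'}$ is a finite-index two-sided congruence on $S^1$; its restriction to $S$ is a finite-index congruence on $S$ separating $u$ from $v$, which proves that $S$ is residually a finite semigroup. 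In the periodic case, $S^1/\mu_{c'}$ is a finite group and $S\to S^1/\mu_{c'}$ is a homomorphism to a finite group with $\bar u\ne\bar v$, which proves that $S$ is residually a finite group. Since $u\ne v$ were arbitrary, both converses follow.

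The genuine work, and the only real obstacle, lies not in these constructions but in reconciling them with the paper's precise definitions of \emph{pre-periodic} and \emph{periodic} point: one must verify that being pre-periodic is equivalent to having finite orbit, and that being periodic forces the finite orbit to carry a compatible group action by $S$ (so that the remarks about $\mu_x$ apply), and one must check that left reductivity is exactly the hypothesis that makes the separating configuration in the converse available — it can fail without it. A minor but pervasive bookkeeping point is that $\mu_x$ naturally lives on $S^1$ whereas residual finiteness is a statement about $S$, so one must pass between the two consistently, confirming that finiteness of index and separation of points survive restriction to $S$.
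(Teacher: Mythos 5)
Your argument follows the paper's proof in both directions: the forward implications use the same construction (pull a configuration back along a homomorphism to a finite semigroup/group that is injective on the given finite window, and observe that the resulting configuration has finite orbit, with complete invariance in the group case because left translation on a finite group is bijective), and the converse of (i) is the paper's argument phrased through the congruence $\mu_x$, which is exactly the paper's $\mathcal{R}_x$ extended to $S^1$.

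The one step that does not hold as stated is the claim underpinning your converse of (ii): that periodicity of $x$ forces $S$ to act on $S^1x=Sx\cup\{x\}$ by bijections, so that $S^1/\mu_x$ embeds into $\mathrm{Sym}(S^1x)$ and is therefore a finite group. The definition of periodicity only gives $tSx=Sx$, i.e.\ bijectivity on $Sx$; it says nothing about the extra point when $x\notin Sx$, and in that case every $t\in S$ maps the $(n+1)$-point set $S^1x$ onto the $n$-point set $Sx$, so no element acts injectively on $S^1x$ and $S^1/\mu_x$ need not be a group. This is not vacuous for left reductive semigroups: take $S=\F_2^+\setminus\{\varepsilon\}$ (no identity adjoined) and let $x\in\{0,1\}^S$ record the last letter of a word. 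Then $Sx$ consists of the two constant configurations, every $t$ fixes $Sx$ pointwise, so $x$ is periodic, yet $x\notin Sx$ and $S^1/\mu_x$ contains the two distinct idempotents $[a],[b]$ with $[u][v]=[v]$ --- a right-zero semigroup, not a group. Your homomorphism onto it still witnesses part (i) for the pair $(a,b)$, but not part (ii). The paper avoids asserting anything about $S^1x$ by mapping $S$ directly into $\mathrm{Sym}(Sx)$; note, though, that its final step (``$s\cdot x\ne s'\cdot x$, hence $s|_{Sx}\ne s'|_{Sx}$'') tacitly uses $x\in Sx$ as well, and in the same example $a|_{Sx}=b|_{Sx}=\mathrm{id}$. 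Both arguments close up whenever $x\in Sx$ --- automatic when $S$ is a monoid, since $x=1_S\cdot x$ --- so on that footing your proof and the paper's coincide. But the blanket assertion that $S^1/\mu_x$ is a finite group for every periodic $x$ is false, and this is precisely the point you yourself defer at the end as ``the genuine work''; it cannot be left as a remark, since it is where the group (rather than merely the finite semigroup) in part (ii) has to come from.
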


Next, we turn our attention to the measure-theoretical case and introduce the \textsc{(e)pa} property for semigroups, extending the notion developed for groups. To study this notion, we make use of the tool of natural extensions. Given a continuous action $S \acts X$ of an embeddable semigroup $S$ and a receiving $S$-group $\receivingSgroup$, we consider its \emph{topological $\receivingSgroup$-extension} $G \acts X_\receivingSgroup$, as studied in \cite{bricenobustosdonoso1}. Then, for the projection map $\pi\colon X_{\receivingSgroup} \rightarrow X$, we let $\pi_*\colon \mathcal{M}_G(X_{\receivingSgroup})\rightarrow \mathcal{M}_S(X)$ be the push-forward map and define the set of $\receivingSgroup$-extensible measures as $\Ext_{\freeSgroup}(X,S) = \im(\pi_*)$. With this in place, we establish the following connection between the set of $\receivingSgroup$-extensible measures and the weak-* closure of the set of $S$-periodic measures (resp. ergodic $S$-periodic measures).

\begin{restatable}{thm}{secondtheorem}
\label{thm:B}
    Let $S$ be an embeddable semigroup, and let $\freeSgroup = (\Gamma,\gamma)$ be a realization of the free $S$-group. If $S\acts X$ is a continuous action and $\Per(X_{\freeSgroup},\Gamma)$ (resp. $\Perg(X_{\freeSgroup},\Gamma)$) is weak-* dense in $\mathcal{M}_\Gamma(X_{\freeSgroup})$, then
    $$
    \Ext_{\freeSgroup}(X,S)=\overline{\Per(X,S)} \quad \text{(resp. $\Ext_{\freeSgroup}(X,S)=\overline{\Perg(X,S)}$)}.
    $$
    In particular, if 
    \begin{enumerate}
        \item[\textup{(i)}] $\Gamma$ has the \textsc{(e)pa} property, and
        \item[\textup{(ii)}] $\textup{Ext}_{\freeSgroup}(\mathcal{A}^S,S) = \mathcal{M}_S(\mathcal{A}^S)$ for every finite alphabet $\mathcal{A}$,
    \end{enumerate}
    then $S$ has the \textsc{(e)pa} property.
\end{restatable}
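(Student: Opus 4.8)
The plan is to reduce both claims to the single fact that the push-forward $\pi_*$ maps $\Per(X_{\freeSgroup},\Gamma)$ onto $\Per(X,S)$ and, more delicately, $\Perg(X_{\freeSgroup},\Gamma)$ onto $\Perg(X,S)$. Granting this, the rest is soft: since $X_{\freeSgroup}$ is compact, $\mathcal{M}_\Gamma(X_{\freeSgroup})$ is weak-$*$ compact and $\pi_*$ is weak-$*$ continuous, so $\Ext_{\freeSgroup}(X,S)=\pi_*(\mathcal{M}_\Gamma(X_{\freeSgroup}))$ is compact, hence closed; and under the denseness hypothesis $\mathcal{M}_\Gamma(X_{\freeSgroup})=\overline{\Per(X_{\freeSgroup},\Gamma)}$, so, the continuous image of a dense subset of a compact set being dense in the image, $\Ext_{\freeSgroup}(X,S)=\overline{\pi_*(\Per(X_{\freeSgroup},\Gamma))}=\overline{\Per(X,S)}$, and identically with $\Perg$ throughout. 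For the ``in particular'' part one applies this with $X=\mathcal{A}^S$: recalling that the topological $\receivingSgroup$-extension of the full shift $\mathcal{A}^S$ is conjugate to the full shift $\mathcal{A}^\Gamma$ (see \cite{bricenobustosdonoso1}), hypothesis (i) says precisely that $\Per(X_{\freeSgroup},\Gamma)$ (resp.\ $\Perg(X_{\freeSgroup},\Gamma)$) is weak-$*$ dense there, so $\Ext_{\freeSgroup}(\mathcal{A}^S,S)=\overline{\Per(\mathcal{A}^S,S)}$ (resp.\ $\overline{\Perg(\mathcal{A}^S,S)}$); combining with hypothesis (ii) yields $\overline{\Per(\mathcal{A}^S,S)}=\mathcal{M}_S(\mathcal{A}^S)$ (resp.\ with $\Perg$) for every finite $\mathcal{A}$, i.e.\ $S$ has the \textsc{(e)pa} property.

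The structural input I would establish first concerns the shape of finitely supported invariant measures. If $\mu$ is such a measure on an $S$-space, then $S$ preserves the finite set $F:=\supp\mu$, and each $s\in S$ acts on $F$ as a surjection, hence as a bijection; thus the $S$-action on $F$ factors through the image $H$ of $S$ in $\operatorname{Sym}(F)$, which --- being a finite cancellative semigroup --- is a group, and $\mu$ is $H$-invariant. Moreover $\mu$ is ergodic exactly when $H$ acts transitively on $F$, in which case $\mu$ is the uniform measure on $F$. Since $S\to H$ is a semigroup morphism into a group and $H$ is its image, the universal property of the free $S$-group $\freeSgroup=(\Gamma,\gamma)$ supplies a surjective group homomorphism $\rho\colon\Gamma\to H$ with $\rho\circ\gamma$ equal to the action map; in particular $F$ carries a $\Gamma$-action through $\rho$ extending its $S$-action, and $F\hookrightarrow X$ is $S$-equivariant for it.

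For $\Per(X,S)\subseteq\pi_*(\Per(X_{\freeSgroup},\Gamma))$, take $\mu\in\Per(X,S)$ and form $F$, $H$, $\rho$ as above. Invoking the universal property (equivalently, the explicit description in \cite{bricenobustosdonoso1}) of the topological $\receivingSgroup$-extension, lift the $S$-equivariant map $F\hookrightarrow X$ to a $\Gamma$-equivariant continuous map $\phi\colon F\to X_{\freeSgroup}$ with $\pi\circ\phi$ the inclusion; then $\phi$ is injective, $\phi(F)$ is $\Gamma$-invariant, and $\hat\mu:=\phi_*\mu$ is finitely supported, $\Gamma$-invariant (as $\mu$ is $H$-invariant and $\Gamma$ acts through $H$), and satisfies $\pi_*\hat\mu=\mu$. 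If $\mu$ is ergodic, then $F$ is $H$-transitive, so $\phi(F)$ is $\Gamma$-transitive and $\hat\mu$ is ergodic; hence $\Perg(X,S)\subseteq\pi_*(\Perg(X_{\freeSgroup},\Gamma))$.

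The reverse inclusion $\pi_*(\Per(X_{\freeSgroup},\Gamma))\subseteq\Per(X,S)$ is immediate, and I expect its ergodic refinement $\pi_*(\Perg(X_{\freeSgroup},\Gamma))\subseteq\Perg(X,S)$ to be the main obstacle, since one must track how transitivity transports through $\pi$. Given $\hat\mu\in\Perg(X_{\freeSgroup},\Gamma)$, write it as the uniform measure on a finite $\Gamma$-orbit $\hat O=\supp\hat\mu$ and put $V:=\pi(\hat O)$, a finite $S$-invariant set on which $S$ acts by bijections through some finite group $H$, again via a surjective homomorphism $\rho\colon\Gamma\to H$ from the free-$S$-group universal property. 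The crux is that $\pi|_{\hat O}\colon\hat O\to V$ is $\Gamma$-equivariant: since $\pi$ is $S$-equivariant and $\gamma(S)$ generates $\Gamma$, this follows by induction on word length, the inverse-generator step using that $\hat O$ is $\Gamma$-invariant together with the relation $\pi(\gamma(s)\hat x)=s\,\pi(\hat x)$. Consequently $V$, being the image of the transitive $\Gamma$-set $\hat O$, is $\Gamma$-transitive, so $H$ acts transitively on $V$, and therefore $\pi_*\hat\mu$ is the uniform $H$-invariant measure on $V$, which is ergodic. Together with the previous paragraph this gives $\pi_*(\Per(X_{\freeSgroup},\Gamma))=\Per(X,S)$ and $\pi_*(\Perg(X_{\freeSgroup},\Gamma))=\Perg(X,S)$, and the proof is complete; the two points requiring care are this equivariance bookkeeping and, for the lift, having the right universal-property handle on $X_{\freeSgroup}$.
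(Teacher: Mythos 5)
Your proposal is correct, and its skeleton is the same as the paper's: everything reduces to (a) lifting (ergodic) periodic measures through $\pi$ and (b) the fact that $\pi_*$ sends (ergodic) periodic measures to (ergodic) periodic ones, after which the weak-$*$ part is soft (your ``continuous image of a dense subset of a compact set'' step is a repackaging of Proposition \ref{prop:ext_weak_closed} combined with Proposition \ref{supp_lemma}(ii)--(iii)). Where you genuinely diverge is in how (a) and (b) are proved, and your route is leaner. For (a), the paper (Proposition \ref{per_are_ext}) manufactures the group acting on the finite support by letting the free product $S\ast\hat{S}$ act and quotienting by the congruence of equal action, whereas you observe directly that the image $H$ of $S$ in $\mathrm{Sym}(\supp\mu)$ is a finite subsemigroup of a finite group, hence a subgroup, and feed the $S$-group $(H,\,s\mapsto s\rvert_{\supp\mu})$ into the universal property of $\freeSgroup$ to get $\rho\colon\Gamma\to H$; the lift is then the explicit map $\phi(y)=(\rho(h)\cdot y)_{h\in\Gamma}$, which is exactly the formula the paper also ends up writing. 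One caution here: the universal property of $X_{\freeSgroup}$ as stated requires the comparison map onto $X$ to be \emph{surjective}, so you cannot quote it verbatim for the inclusion $\supp\mu\hookrightarrow X$; use the explicit formula instead (a one-line check that it lands in $X_{\freeSgroup}$), as your parenthetical already allows. For (b) in the ergodic case, your word-length induction showing that $\pi\rvert_{\hat{O}}$ is $\rho$-equivariant replaces the paper's Lemma \ref{image_of_periodic} (which treats inverse generators via the finite-order trick, $\eta(s)^{-1}$ acting as $\eta(s^{n-1})$ on a finite orbit) together with the support identity $\supp(\pi_*\bar{\mu})=\pi(\supp\bar{\mu})$ of Proposition \ref{supp_lemma}; both arguments are sound, and yours additionally exhibits the fibers of $\pi\rvert_{\hat{O}}$ as having constant cardinality, which gives uniformity of the image measure at once. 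The structural facts you defer (each $s$ acts bijectively on a finite support; ergodic means a single transitive orbit carrying the uniform measure) are precisely Proposition \ref{prop:structure_periodic_measures}, so nothing essential is missing.
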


We then investigate the $\freeSgroup$-extensibility of measures for the free $S$-group $\freeSgroup$, and use Theorem \ref{thm:B} to establish the \textsc{(e)pa} property for some classes of semigroups. First, we consider the class of left amenable semigroups and prove the following result.

\begin{restatable}{thm}{thirdtheorem}
\label{thm:C}
Let $S$ be a left amenable semigroup that is residually a finite group. Then, $S$ has the \textsc{epa} property.
\end{restatable}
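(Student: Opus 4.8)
The plan is to deduce Theorem~\ref{thm:C} from Theorem~\ref{thm:B} applied to the free $S$-group $\freeSgroup=(\Gamma,\gamma)$, by checking its two sufficient conditions: that $\Gamma$ has the \textsc{epa} property, and that $\Ext_{\freeSgroup}(\mathcal{A}^S,S)=\mathcal{M}_S(\mathcal{A}^S)$ for every finite alphabet $\mathcal{A}$. I begin with a structural remark pinning down $\Gamma$. Since $S$ is residually a finite group it embeds in a group, hence is cancellative and $\gamma$ is injective. Moreover, a left-invariant mean $m$ on $\ell^\infty(S)$ forces the right Ore condition: if $aS\cap bS=\emptyset$ for some $a,b\in S$, then $m(\mathbf{1}_{aS})=1$ because $s\mapsto\mathbf{1}_{aS}(as)$ is identically $1$, and likewise $m(\mathbf{1}_{bS})=1$, contradicting $\mathbf{1}_{aS}+\mathbf{1}_{bS}\le\mathbf{1}_{S}$. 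Thus the group of right fractions $SS^{-1}$ exists and, being the universal group of $S$, realizes the free $S$-group; we may therefore take $\Gamma=SS^{-1}$, so that every $g\in\Gamma$ has the form $ab^{-1}$ with $a,b\in S$.

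The technical heart is transporting amenability from $S$ to $\Gamma$ at the level of Følner sets. Being left amenable and left cancellative, $S$ admits, by Følner's condition, a sequence $(F_n)_n$ of finite nonempty subsets with $|aF_n\symdiff F_n|/|F_n|\to 0$ for every $a\in S$. Viewing each $F_n$ inside $\Gamma$ via $\gamma$ and telescoping along a factorization $\gamma=ab^{-1}$ — using that left multiplication by any element of $\Gamma$ is a bijection, so that $|ab^{-1}F_n\symdiff aF_n|=|b^{-1}F_n\symdiff F_n|=|F_n\symdiff bF_n|$ — one gets $|\gamma F_n\symdiff F_n|\le|bF_n\symdiff F_n|+|aF_n\symdiff F_n|=o(|F_n|)$ for every $\gamma\in\Gamma$. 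Hence $(F_n)_n$ is a left Følner sequence for $\Gamma$; in particular $\Gamma$ is amenable.

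With this in hand I can verify both conditions of Theorem~\ref{thm:B}. For (i): given $g\in\Gamma\setminus\{e\}$, write $g=ab^{-1}$ with $a\neq b$ in $S$; since $S$ is residually a finite group there is a morphism $\phi\colon S\to H$ onto a finite group with $\phi(a)\neq\phi(b)$, which the universal property extends to $\bar\phi\colon\Gamma\to H$ with $\bar\phi(g)=\phi(a)\phi(b)^{-1}\neq e$, so $\Gamma$ is residually finite; being also amenable, $\Gamma$ has the \textsc{epa} property by \cite{ren2018}. For (ii), I would in fact prove the stronger statement that $\Ext_{\freeSgroup}(X,S)=\mathcal{M}_S(X)$ for \emph{every} continuous action $S\acts X$, by averaging: fix $\mu\in\mathcal{M}_S(X)$; since the projection $\pi\colon X_{\freeSgroup}\to X$ is a continuous surjection of compact metrizable spaces (by the construction of the topological $\freeSgroup$-extension in \cite{bricenobustosdonoso1}), choose $\tilde\mu\in\mathcal{M}(X_{\freeSgroup})$ with $\pi_*\tilde\mu=\mu$ (e.g.\ via a Borel section of $\pi$), and let $\hat\mu$ be a weak-* limit point of $\tfrac{1}{|F_n|}\sum_{g\in F_n}g_*\tilde\mu$. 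The Følner property of $(F_n)_n$ for $\Gamma$ makes $\hat\mu$ $\Gamma$-invariant, while $S$-equivariance of $\pi$ together with $F_n\subseteq S$ and $\mu\in\mathcal{M}_S(X)$ gives $\pi_*(g_*\tilde\mu)=g_*\mu=\mu$ for each $g\in F_n$, whence $\pi_*\hat\mu=\mu$. Thus $\mu\in\im(\pi_*)=\Ext_{\freeSgroup}(X,S)$, and in particular (ii) holds. Feeding (i) and (ii) into Theorem~\ref{thm:B} in the \textsc{epa} case yields that $S$ has the \textsc{epa} property.

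The main obstacle, and the only place a genuinely nontrivial input is used, is the semigroup-theoretic step identifying $\Gamma$: one must know that left amenability forces the Ore condition (so that the universal group is the group of fractions) and must match the handedness conventions relating the $S$-action, the $\freeSgroup$-extension of \cite{bricenobustosdonoso1}, and the Følner condition above. Once $\Gamma$ is recognized as $SS^{-1}$ and equipped with a Følner sequence coming from $S$, everything else is soft: no specification hypothesis and no explicit periodic-point construction (in contrast with Bowen's treatment of free groups) is needed, since amenability of $\Gamma$ does all the work through the averaging argument.
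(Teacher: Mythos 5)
Your argument reaches Theorem~\ref{thm:C} along the same skeleton as the paper---identify the free $S$-group $\freeSgroup=(\Gamma,\gamma)$ with the group of right fractions, check hypotheses (i) and (ii) of Theorem~\ref{thm:B}, and conclude---but it verifies those hypotheses differently. For (i), the paper quotes Remark~\ref{rem:unique_receiving_group}, Proposition~\ref{prop:right_fractions_amenable} and Proposition~\ref{prop:residually_finite_embeddings} and then invokes \cite{ren2018} (Corollary~\ref{cor:epa_property_amenable}); you instead re-derive these facts from scratch: the invariant-mean argument for the Ore condition, the transport of a F\o lner sequence from $S$ to $\Gamma=S S^{-1}$ via the telescoping estimate, and residual finiteness of $\Gamma$ through the universal property (here you should replace $H$ by the finite subgroup generated by $\phi(S)$ so that $(H,\phi)$ is an $S$-group, a harmless fix). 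All of this is sound. The genuine divergence is hypothesis (ii): the paper proves $\Ext_{\freeSgroup}(X,S)=\mathcal{M}_S(X)$ for \emph{every} continuous action of a left reversible bicancellative semigroup by a Kolmogorov-consistency construction (Theorem~\ref{thm:extensibility_reversible}, proved in the Appendix), which uses only the directedness of the fraction order and no amenability, whereas you lift $\mu$ arbitrarily to $X_{\freeSgroup}$ and average the lift along the F\o lner sequence of $\Gamma$. Your averaging argument is shorter and softer, but it is intrinsically tied to amenability of $\Gamma$; the paper's construction is a standalone result valid for all reversible semigroups and all continuous actions.

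One caveat on your ``stronger statement'': it rests on the assertion that $\pi\colon X_{\freeSgroup}\to X$ is surjective for every continuous action, which is false in general---surjectivity of $\pi$ is exactly topological $\freeSgroup$-extensibility, and the result from \cite{bricenobustosdonoso1} guarantees it only for \emph{surjective} actions. For instance, for $\N\acts[0,1]$ with $n\cdot x=x/2^n$ the natural $\Z$-extension is a single point, so no lift $\tilde\mu$ of a generic measure would exist a priori; showing that an invariant $\mu$ is supported on $\pi(X_{\freeSgroup})$ in the non-surjective case is essentially what the Kolmogorov construction provides. This does not affect your proof of Theorem~\ref{thm:C}: condition (ii) of Theorem~\ref{thm:B} only concerns the full shift, where $\pi\colon\mathcal{A}^\Gamma\to\mathcal{A}^S$ is the restriction map $\bar{x}\mapsto\bar{x}\circ\gamma$ and is plainly surjective, so the lift exists and your averaging argument closes the proof.
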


Next, we consider the free case, and obtain the following.

\begin{restatable}{thm}{fourththeorem}
\label{thm:D}
Let $\F_d$ be the free group on generators $\{a_1,\dots,a_d\}$. For $\Sigma \subseteq \{a_1^{\pm 1},\dots,a_d^{\pm 1}\}$, consider the subsemigroup $S = \left<\Sigma\right>^+$, so that $\freegroup_d=(\F_d,\iota)$ is the free $S$-group for the inclusion $\iota\colon S\to \F_d$. Then, $\textup{Ext}_{\freegroup_d}(\mathcal{A}^S,S)=\mathcal{M}_S(\mathcal{A}^S)$ and $S$ has the \textsc{pa} property.
\end{restatable}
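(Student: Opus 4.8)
The plan is to verify hypotheses (i) and (ii) of Theorem~\ref{thm:B} for the free $S$-group $\freegroup_d = (\F_d,\iota)$, from which the \textsc{pa} property for $S$ follows immediately. Hypothesis (i) is essentially Bowen's theorem: the free group $\F_d$ has the \textsc{pa} property by \cite{bowen2003periodicity}, so periodic measures are weak-* dense in $\mathcal{M}_{\F_d}(X_{\freegroup_d})$ for any continuous action, and in particular the first conclusion of Theorem~\ref{thm:B} applies. (Note we only claim \textsc{pa}, not \textsc{epa}, precisely because Bowen's construction is not known to yield ergodic periodic measures — this is flagged in the introduction.) So the real content is hypothesis (ii): we must show $\Ext_{\freegroup_d}(\mathcal{A}^S,S) = \mathcal{M}_S(\mathcal{A}^S)$, i.e.\ every $S$-invariant Borel probability measure on $\mathcal{A}^S$ lifts to an $\F_d$-invariant measure on the $\freegroup_d$-extension $(\mathcal{A}^S)_{\freegroup_d}$ whose push-forward is the original measure.

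The key point is the combinatorial structure of $S = \langle\Sigma\rangle^+$ inside $\F_d$: since $\Sigma \subseteq \{a_1^{\pm1},\dots,a_d^{\pm1}\}$ and $\F_d$ is free, every element of $S$ has a \emph{unique} reduced word representation in the generators of $\F_d$, and that reduced word lies entirely in $\Sigma^*$ (no cancellation occurs when multiplying elements of $S$, because there is no way to produce $a_i^{-1}$ from a positive product of letters unless $a_i^{-1} \in \Sigma$, in which case it is already a generator of $S$). Concretely, $S$ is itself a free semigroup on $\Sigma$, and the Cayley graph of $\F_d$ restricted to how $S$ acts looks like a forest. I would make this precise by identifying $(\mathcal{A}^S)_{\freegroup_d}$ with (a subsystem of) $\mathcal{A}^{\F_d}$ via the coordinates indexed by the $S$-translates, using the description of the topological $\G$-extension from \cite{bricenobustosdonoso1}: a point of the extension records, for each $g \in \F_d$, a configuration in $\mathcal{A}^S$ obtained by ``shifting by $g$,'' and the compatibility conditions are exactly that overlapping $S$-cosets agree.

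The heart of the argument is then a measure-lifting construction. Given $\mu \in \mathcal{M}_S(\mathcal{A}^S)$, I would build an $\F_d$-invariant measure $\hat\mu$ on the extension by a tree/branching-process construction — this is where the phrase ``Markov tree chain'' in the keywords points. The idea: the $S$-action on $\mathcal{A}^S$ determines, via $\mu$, a consistent family of finite-dimensional marginals; because $S$ freely generates a ``positive cone'' in $\F_d$ and the tree of cosets branches in a controlled way, one can propagate $\mu$ outward along the tree using conditional measures, Kolmogorov-extension-style, to define $\hat\mu$, and then symmetrize/average to obtain full $\F_d$-invariance rather than mere $S$-invariance. The compatibility of the conditionals is forced by the freeness (unique reduced words $\Rightarrow$ no cyclic consistency obstructions, unlike in a general embeddable semigroup). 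Finally, one checks $\pi_*\hat\mu = \mu$ by construction, giving $\mathcal{M}_S(\mathcal{A}^S) \subseteq \Ext_{\freegroup_d}(\mathcal{A}^S,S)$; the reverse inclusion is automatic since push-forwards of invariant measures are invariant.

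The main obstacle I anticipate is the construction and $\F_d$-invariance of $\hat\mu$: extending $\mu$ from the ``positive tree'' $S$ to all of $\F_d$ requires making genuinely many choices (one must define the measure on coordinates indexed by group elements not in $S$), and proving that these choices can be made consistently and symmetrically under left multiplication by \emph{all} of $\F_d$, not just $S$, is delicate. I expect this is handled by exploiting that the extension $X_{\freegroup_d}$ already ``contains'' an $\F_d$-action by design (that is the whole point of the natural extension in \cite{bricenobustosdonoso1}), so the task reduces to showing the \emph{set} of $\F_d$-invariant measures projecting onto a given $\mu$ is nonempty — which should follow from a compactness argument (the fibers of $\pi_*$ over a point are compact and nonempty once one exhibits a single invariant measure in the fiber, e.g.\ via a Markov-tree-chain whose one-step transitions are read off from $\mu$'s conditional structure along $\Sigma$). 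Verifying that these transition kernels are well-defined $\mu$-a.e.\ and that the resulting tree-indexed process is stationary under the $\F_d$-action is the technical crux.
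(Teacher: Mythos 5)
Your overall architecture is exactly the paper's: invoke Bowen's theorem for hypothesis (i) of Theorem~\ref{thm:B}, prove $\Ext_{\freegroup_d}(\mathcal{A}^S,S)=\mathcal{M}_S(\mathcal{A}^S)$ for hypothesis (ii), and conclude. Your sketch of the Markov case is also essentially the paper's Proposition~\ref{propExtFd}: one reads off transition matrices $\mathbf{P}^a$ along $\Sigma$ and defines the missing matrices for $a\notin\Sigma$ by the detailed-balance formula $\hat{\mathbf{P}}^a_{k,\ell}=\tfrac{\mathbf{p}_\ell}{\mathbf{p}_k}\mathbf{P}^{a^{-1}}_{\ell,k}$, which by the tree-indexed Markov criterion (Proposition~\ref{prop:georgii}) yields an $\F_d$-invariant Markov measure projecting to $\mu$.

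The genuine gap is the passage from Markov measures to arbitrary $\mu\in\mathcal{M}_S(\mathcal{A}^S)$. Your proposed resolution does not work as stated, for two reasons. First, ``symmetrize/average to obtain full $\F_d$-invariance'' is unavailable: $\F_d$ is non-amenable for $d\ge 2$, so there is no averaging procedure, and since $S$ is not left reversible one cannot translate an arbitrary finite subset of $\F_d$ into $S$ (e.g.\ no $g$ satisfies $\{a_1^{-1},a_2^{-1}\}g\subseteq\F_2^+$), so the marginals of the putative extension on coordinates behind the root are \emph{not} determined by $\mu$ and must be chosen; Kolmogorov consistency of these choices across all finite trees is precisely the hard part. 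Second, your fallback --- exhibit one invariant measure in the fiber via ``a Markov-tree-chain whose one-step transitions are read off from $\mu$'s conditional structure along $\Sigma$'' --- produces a measure whose push-forward is the one-step \emph{Markovization} of $\mu$, not $\mu$ itself, so for non-Markov $\mu$ it lands in the wrong fiber. The missing idea (Proposition~\ref{prop:all_measures_are_extensible}) is to Markovize at every scale: for each $m$, pass to the higher $B_m$-block recoding, define a Markov $\Sigma$-tree chain from the $\mu$-probabilities of patterns on $B_m$, check the resulting measure $\nu_m$ is supported on the image of the block code so that it pulls back to a measure $\mu_m$ on $\mathcal{A}^S$, note each $\mu_m$ is $\freegroup_d$-extensible (Markov case plus invariance of extensibility under conjugacy, Proposition~\ref{weakfactors}), verify $\mu_m\to\mu$ weak-*, and finally invoke the weak-* closedness of $\Ext_{\freegroup_d}(\mathcal{A}^S,S)$ (Proposition~\ref{prop:ext_weak_closed}) to conclude $\mu$ itself is extensible. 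Without this approximation-plus-closedness step your argument only covers Markov measures. (A minor additional slip: if $\Sigma$ contains some $a_i$ together with $a_i^{-1}$, then $S$ is not free on $\Sigma$, though this does not affect the main line of the argument.)
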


In particular, Theorem \ref{thm:D} allows us to conclude that the free semigroup $\F_d^+$ on $d$ generators has the \textsc{pa} property. Finally, we provide a characterization of the free $S$-group for actions of certain subsemigroups $S$ of free groups of finite rank $\F_d$, which is reminiscent of the main result in \cite{bricenobustosdonoso1}, but in a measure-theoretical context.

\begin{restatable}{thm}{fifththeorem}
\label{thm:E}
Let $d$ be a positive integer and consider the free group $\F_d$ on the generators $\{a_1,\dots,a_d\}$. Given $\Sigma \subseteq \{a_1^{\pm 1},\dots,a_d^{\pm 1}\}$ such that $\Sigma \supseteq \{a_1,\dots,a_d\}$, consider the subsemigroup $S = \left<\Sigma\right>^+$, and let $\receivingSgroup$ be a receiving $S$-group. The following are equivalent:
\begin{enumerate}
\item[(i)] $\receivingSgroup$ is a realization of the free $S$-group, i.e., $\receivingSgroup\simeq \freegroup_d$.
\item[(ii)] Every $S$-invariant measure in $\mathcal{M}(\mathcal{A}^S)$ is $\receivingSgroup$-extensible.
\item[(iii)] Every fully supported $S$-invariant Markov measure in $\mathcal{M}(\mathcal{A}^S)$ is $\receivingSgroup$-extensible.
\end{enumerate}
\end{restatable}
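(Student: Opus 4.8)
The plan is to prove the cycle $(\mathrm{i})\Rightarrow(\mathrm{ii})\Rightarrow(\mathrm{iii})\Rightarrow(\mathrm{i})$. For $(\mathrm{i})\Rightarrow(\mathrm{ii})$, note that $\{a_1,\dots,a_d\}\subseteq\Sigma\subseteq\{a_1^{\pm1},\dots,a_d^{\pm1}\}$ places $S$ and the inclusion $\iota\colon S\hookrightarrow\F_d$ within the scope of Theorem~\ref{thm:D}, so $\Ext_{\freegroup_d}(\mathcal{A}^S,S)=\mathcal{M}_S(\mathcal{A}^S)$ for every finite alphabet. Since the natural extension, the projection $\pi$, and hence $\Ext_\receivingSgroup$ depend on $\receivingSgroup$ only up to isomorphism of $S$-groups, the hypothesis $\receivingSgroup\simeq\freegroup_d$ transports this equality to $\receivingSgroup$, which is $(\mathrm{ii})$. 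The implication $(\mathrm{ii})\Rightarrow(\mathrm{iii})$ is immediate, as fully supported $S$-invariant Markov measures lie in $\mathcal{M}_S(\mathcal{A}^S)$.

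For $(\mathrm{iii})\Rightarrow(\mathrm{i})$ I would argue by contraposition: assuming $\receivingSgroup=(\G,\theta)\not\simeq\freegroup_d$ as $S$-groups, I produce a fully supported $S$-invariant Markov measure that is not $\receivingSgroup$-extensible. Let $\phi\colon\F_d\to\G$ be the canonical surjection of $S$-groups furnished by the universal property (recall that $\freegroup_d$ realizes the free $S$-group here), so $N:=\ker\phi\neq\{e\}$. Suppose first that $\theta$ is not injective on $S$, and choose $s_1\neq s_2$ in $S$ with $\theta(s_1)=\theta(s_2)$. Recalling that a point $\xi$ of the $\receivingSgroup$-extension of $\mathcal{A}^S$ assigns to each $g\in\G$ a point $\xi(g)\in\mathcal{A}^S$ with $\pi(\xi)=\xi(e)$ and $\xi(g\theta(s))=s\cdot\xi(g)$, the equality $\theta(s_1)=\theta(s_2)$ forces $s_1\cdot\xi(g)=s_2\cdot\xi(g)$ for every $g$; in particular $\pi(\xi)$ lies in the proper, closed, shift-invariant set $\{x\in\mathcal{A}^S : x(ts_1)=x(ts_2)\text{ for all }t\in S\}$. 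Hence $\pi_*\nu$ is never fully supported, so the uniform Bernoulli measure — which is Markov, fully supported and $S$-invariant — is not $\receivingSgroup$-extensible, contradicting $(\mathrm{iii})$.

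Now suppose $\theta$ is injective on $S$. Because $\F_d^+\subseteq S$, this is equivalent to $N\cap\{s_1s_2^{-1}:s_1,s_2\in S\}=\{e\}$, which forbids valley-free (i.e.\ positive-times-negative) nontrivial elements of $N$ even after cyclic conjugation; hence every $w\in N\setminus\{e\}$ admits, up to a cyclic conjugate (still in $N$), a syllable decomposition $w=p_1q_1^{-1}\cdots p_kq_k^{-1}$ with $k\geq2$ and each $p_i,q_i$ a nonempty positive word, so $p_i,q_i\in S$. The relation $\phi(w)=e$ reads $\theta(p_1)\theta(q_1)^{-1}\cdots\theta(p_k)\theta(q_k)^{-1}=e$ in $\G$; setting $g_0=g_k=e$ and $g_i=g_{i-1}\theta(p_i)\theta(q_i)^{-1}$ we get $g_{i-1}\theta(p_i)=g_i\theta(q_i)$ in $\G$, whence in the natural extension $p_i\cdot\xi(g_{i-1})=q_i\cdot\xi(g_i)$ for $1\le i\le k$. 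For any $\G$-invariant $\nu$ on the $\receivingSgroup$-extension each $\xi(g_i)=\pi(g_i\cdot\xi)$ has marginal $\mu:=\pi_*\nu$ and the loop $g_0\to\cdots\to g_k=g_0$ is $\nu$-preserving, so composing the linking identities around the loop collapses them to a nontrivial \emph{finite} identity satisfied by $\mu$: two cylinder probabilities over $\mathcal{A}^S$ that are unconstrained for a general $S$-invariant measure must coincide. (In the archetypal case $w=(a_ia_j^{-1})^2$, the map $\xi\mapsto\theta(a_i)\theta(a_j)^{-1}\cdot\xi$ is a $\nu$-preserving involution, and combining this with the two linking identities forces $(x(a_i),x(a_j))$ — or $(x(ta_i),x(ta_j))$ for a generator $t$, if $e\notin S$ — to have an exchangeable law under $\mu$.) Since $S$-invariant Markov measures on $\mathcal{A}^S$ are Markov tree chains, governed by a common stationary vector $\lambda$ and transition kernels $P_{a_1},\dots,P_{a_d}$ that may be prescribed independently, one can choose a fully supported such $\mu$ — over a suitable alphabet with $|\mathcal{A}|\geq3$ — violating this identity (for two siblings the joint law is $\sum_\gamma\lambda(\gamma)P_{a_i}(\gamma,\cdot)P_{a_j}(\gamma,\cdot)$, which is generically non-exchangeable). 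This $\mu$ is a fully supported $S$-invariant Markov measure that is not $\receivingSgroup$-extensible, contradicting $(\mathrm{iii})$. Therefore $N=\{e\}$ and $\receivingSgroup\simeq\freegroup_d$.

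The main obstacle is the analysis in the third paragraph: converting an arbitrary nontrivial element of $\ker\phi$ into a genuine finitely-supported consistency identity for $\pi_*\nu$ — rather than one automatically satisfied by every $S$-invariant measure — and then verifying that the family of fully supported Markov tree chains is rich enough to violate every such identity. The remaining ingredients (the cycle structure, the dichotomy according to whether $\theta$ is injective, and the non-injective case) are routine given Theorem~\ref{thm:D} and the natural-extension machinery of \cite{bricenobustosdonoso1}.
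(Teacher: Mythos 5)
Your overall architecture coincides with the paper's: (i)$\Rightarrow$(ii) via Theorem~\ref{thm:D} (i.e., Proposition~\ref{prop:all_measures_are_extensible}) combined with the fact that extensibility depends on the receiving $S$-group only up to isomorphism of $S$-groups (Proposition~\ref{prop:isomorphic_S_groups}); (ii)$\Rightarrow$(iii) trivially; and (iii)$\Rightarrow$(i) by contraposition, exhibiting a fully supported $S$-invariant Markov measure that is not $\receivingSgroup$-extensible. One minor point: your case split on whether $\theta$ is injective on $S$ is vacuous, since by definition a receiving $S$-group comes equipped with an \emph{embedding} $\eta\colon S\to G$, so only your second case can occur.

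More seriously, the heart of the matter --- converting an arbitrary nontrivial $w\in\ker(\F_d\to G)$ into a constraint that some fully supported $S$-invariant Markov measure violates --- is exactly the step you leave open, and it does not follow from the loop-composition sketch. Composing the linking identities around the collapsed cycle only constrains $\supp(\bar{\mu})$; extracting from this a finite identity for $\pi_*\bar{\mu}$ that is \emph{not} automatically satisfied by every $S$-invariant measure, and then verifying that Markov tree chains generically violate it, is precisely what must be proved, and the archetypal case $(a_ia_j^{-1})^2$ (exchangeability of siblings) gives no indication of how to treat a $w$ with long mixed syllable structure. The paper resolves this with a concrete construction: take a faithful $2\times2$ integer matrix representation $A_1,\dots,A_d$ of $\F_d$, choose a prime $p$ so large that $w$ still acts nontrivially on $\Z_p^2$, and let $\mu$ be the $S$-invariant Markov measure on $(\Z_p^2)^S$ with uniform stationary vector and transition matrices assigning weight $1-(p^2-1)\delta$ to $\mathbf{v}=A\mathbf{u}\bmod p$ and $\delta$ otherwise. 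Any $\receivingSgroup$-extension $\bar{\mu}$ would give the cylinder $[\mathbf{v^*};1_G]$, for a vector $\mathbf{v^*}$ moved by $w$ mod $p$, mass at most $p^{2n-4}\delta$, because every configuration on the length-$n$ cycle in $G$ determined by $w$ must contain at least one $\delta$-transition; taking $\delta<p^{-(2n-2)}$ contradicts $\mu([\mathbf{v^*};\varepsilon])=p^{-2}$. Note that this is a counting \emph{inequality} around the cycle rather than an exact consistency identity, which is why it applies uniformly to all $w$; your proposal, as it stands, has a genuine gap at this point.
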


The paper is organized as follows. In \S \ref{sec1}, we introduce the basic concepts of semigroup actions. In \S \ref{sec2}, we discuss the notion of periodicity for semigroup actions---in the topological and measure-theoretical settings---and its connection to residual finiteness, leading to the proof of Theorem \ref{thm:A}. In \S \ref{sec3}, we consider natural extensions and explore the relationship between the \textsc{(e)pa} property for a semigroup $S$ and its free $S$-group, establishing Theorem \ref{thm:B}. In \S \ref{sec4}, we prove the \textsc{(e)pa} property for certain semigroups, as given in Theorem \ref{thm:C} and Theorem \ref{thm:D}, and conclude with a measure-theoretical characterization of the free $S$-group for certain subsemigroups $S$ of free groups of finite rank $\F_d$, as given in Theorem \ref{thm:E}.

\section{Preliminaries}
\label{sec1}

\subsection{Semigroups and embeddings into groups}

A \textbf{semigroup} is a set $S$ together with an associative binary operation $(s,t) \mapsto st$. A \textbf{monoid} is a semigroup which has a---necessarily unique---identity element $1_S$. Throughout this work, we shall only deal with countable, discrete semigroups, with a special emphasis on monoids.

A \textbf{subsemigroup} of $S$ is a subset $T\subseteq S$ such that $tt'\in T$ for all $t,t'\in T$. In this scenario, we write $T\leq S$. A common way to identify a subsemigroup $T$ is to take this set to be comprised of all possible products of elements of a subset $K\subseteq S$. More precisely, we define the \textbf{subsemigroup generated by} $K$ as follows:
$$\langle K\rangle^+=\{k_0\cdots k_{n-1} : n \in \N, k_0,\dots,k_{n-1}\in K\} \subseteq S,$$
where the empty product is the identity $1_S$ when $S$ is a monoid.
We say that $K$ \textbf{generates} a subsemigroup $T\le S$ if $T=\langle K\rangle^+$.

A \textbf{semigroup homomorphism} is a function $\sgmorph\colon S\rightarrow T$ satisfying $\sgmorph(st)=\sgmorph(s)\sgmorph(t)$ for every $s,t\in S$. A semigroup homomorphism between monoids is called a \textbf{monoid homomorphism} if, additionally, $\sgmorph(1_S)=1_T$. An \textbf{embedding} will be an injective homomorphism, and an \textbf{isomorphism} will be a bijective homomorphism. If there is an isomorphism $S\to T$, we say $S$ and $T$ are \textbf{isomorphic} and denote this by $S\simeq T$. In this case, $S$ and $T$ will be, in most cases, essentially the same for our purposes.

\subsubsection{Free semigroups and presentations}

We want to give a semigroup structure to the quotient of a semigroup by an equivalence relation. An equivalence relation $\mathcal{R}$ on a semigroup $S$ is said to be a \textbf{congruence} if for every $a,b,c\in S$, $a\mathbin{\mathcal{R}} b$ implies both $ca \mathbin{\mathcal{R}} cb$ and $ac \mathbin{\mathcal{R}} bc$. Given such a relation, we denote by $\pi_\mathcal{R}\colon S\rightarrow S/\mathcal{R}$ the quotient map $\pi_\mathcal{R}(s)=[s]_\mathcal{R}$.

If $\mathcal{R}$ is a congruence on $S$ and $a\mathbin{\mathcal{R}} a'$, $b\mathbin{\mathcal{R}} b'$,  transitivity implies that $ab \mathbin{\mathcal{R}} a'b'$, and thus the binary operation $S/\mathcal{R}\times S/\mathcal{R}\longrightarrow S/\mathcal{R}$ given by $[a]_\mathcal{R} \cdot [b]_\mathcal{R} =[ab]_\mathcal{R}$ is well-defined and gives $S/\mathcal{R}$ a semigroup structure, for which $\pi_{\mathcal{R}}$ is a semigroup homomorphism. For groups, the notions of quotient by a congruence and quotient by a normal subgroup are equivalent; this is easily seen by noting that there is a bijective correspondence associating to a congruence $\mathcal{R}$ the normal subgroup $N_{\mathcal{R}}=\{g\in G:g\mathrel{\mathcal{R}} 1_G\}\trianglelefteq G$ and vice versa. 

Given any set $B$, the \textbf{free semigroup generated by} $B$ is the set $\F(B)^+$ of all finite words of elements from $B$, with concatenation as its binary operation. There is a canonical inclusion $\iota\colon X\hookrightarrow \F(B)^+$, which comes with a universal property, analogous to that of free groups. If $\lvert B_1\rvert = \lvert B_2\rvert$, then $\mathbb{F}(B_1)^+ \simeq \mathbb{F}(B_2)^+$. Thus, we may talk of \emph{the} free semigroup generated by $d$ elements, $\F_d^+:=\F(\{a_1,\dotsc,a_d\})^+$ for an arbitrary, fixed set $\{a_1,\dotsc,a_d\}$, and then $\F(B)^+\cong \F_d^+$ for any set $B$ of $d$ elements. The empty word $\varepsilon$ will be the identity element of $\F(B)^+$.

If $S=\langle B\rangle^+$ for some $B \subseteq S$, then $S$ is isomorphic to a quotient of $\mathbb{F}(B)^+$. This allows us to give a combinatorial description of a semigroup: given a set $B$ of \textbf{generators} and a collection $R \subseteq\F(B)^+\times \F(B)^+$ of \textbf{relations}, the \textbf{semigroup presentation} $\langle B\mid R\rangle^+$ corresponds to the ``largest'' semigroup generated by $B$ for which all equations $u=v$, for $(u,v)\in R$, hold true. This is formally the quotient $\F(B)^+/\mathcal{R}$, where $\mathcal{R}$ is the smallest congruence in $\F(B)^+$ containing $R$ as a subset. Note that many different sets of relations may describe the same semigroup.

\subsubsection{The free group on a semigroup}

A pair $\receivingSgroup = (G,\eta)$ is an $S$\textbf{-group} if $G$ is a group and $\eta\colon S\rightarrow G$ is a semigroup morphism with $\langle \eta(S)\rangle =G$. If $\eta$ is an embedding, $\receivingSgroup$ is called a \textbf{receiving} $S$\textbf{-group}, and whenever a semigroup admits an embedding into a group, we will say $S$ is \textbf{embeddable}. A \textbf{morphism} between two receiving $S$-groups $(G,\eta)$ and $(G',\eta')$ is a group morphism $\grmorph\colon G\rightarrow G'$ such that $\grmorph\circ \eta=\eta'$.

\begin{definition}
    Let $S$ be a semigroup. A \textbf{free group on the semigroup} $S$, or a \textbf{free} $S$\textbf{-group}, is an $S$-group $\freeSgroup = (\Gamma,\gamma)$ such that for every $S$-group $(G,\eta)$ there is a unique morphism $\grmorph \colon \Gamma\rightarrow G$ with $\grmorph\circ \gamma=\eta$.
$$
\begin{tikzcd}
S \arrow[r, "\gamma"] \arrow[rd, "\eta"'] &  \Gamma \arrow[d, "\grmorph", dashed] \\
&  G                             
\end{tikzcd}
$$
\end{definition}

Note that, if $(\Gamma,\gamma)$ is the free $S$-group and $(G,\eta)$ is any $S$-group, the morphism $\grmorph\colon \Gamma\to G$ granted by the universal property of $(\Gamma,\gamma)$ must be surjective, as its image contains a generating set for $G$.

The free $S$-group always exists and it is unique up to isomorphism of $S$-groups (see \cite[\S 12]{CliffordII} for more details). Accordingly, we will speak of \emph{the} free $S$-group whenever we talk about properties that are stable under isomorphisms of $S$-groups, and of a \emph{realization} of the free $S$-group when we want to refer to a specific pair $(\Gamma,\gamma)$. 

If we know a presentation $\langle B\mid R\rangle^+$ for $S$, the group $\langle B\mid R\rangle$ provides a concrete way of viewing the free $S$-group. More precisely, if $\mathcal{R}^+$ is the congruence generated by $R$ in $\F(B)^+$ and $\mathcal{R}$ is the congruence generated by $R$ in $\F(B)$, let
$$S=\F(B)^+/\mathcal{R}^+\quad \text{and}\quad \Gamma=\F(B)/\mathcal{R}.$$
Then, if $\iota\colon S\rightarrow \Gamma$ is the semigroup morphism given by $\iota([b]_{\mathcal{R}^+})=[b]_{\mathcal{R}}$ for $b \in B$, extended homomorphically to all of $S$, the pair $(\langle B\mid R\rangle,\iota)$ will be a realization of the free $S$-group. In this case, the pair $(\langle B\mid R\rangle, \iota)$ will be referred to as the \textbf{canonical realization} of the free $S$-group (see \cite[Proposition 3.2]{bricenobustosdonoso1}). For example, the following are the associated groups to canonical realizations of semigroups:
\begin{enumerate}
    \item $\Z^d=\langle a_1,\dots a_d\mid a_ia_j=a_ja_i\rangle$ for $\N^d=\langle a_1,\dots a_d\mid a_ia_j=a_ja_i \rangle^+$;
    \item $\text{BS}(m,n)=\langle a,b\mid ab^m=b^na\rangle$ for $\text{BS}(m,n)^+=\langle a,b\mid ab^m=b^na\rangle^+$;
    \item $\F_d=\langle a_1,\dotsc,a_d\mid\varnothing\rangle$ for $\F_d^+=\langle a_1,\dotsc,a_d\mid\varnothing\rangle^+$.
\end{enumerate}
In these three instances, the semigroup morphism $\iota$ associated to each canonical realization is the one that sends generators to generators in the natural way. Moreover, in every case, $\iota$ is an embedding. In particular, all these cases correspond to embeddable semigroups. If $\langle B\mid R\rangle^+$ turns out to be embeddable into $\langle B\mid R\rangle$ via $\iota$, we can always assume that $\langle B\mid R\rangle^+$ is a monoid by artifically adding an identity $1_S$ if necessary. In this case, defining $\iota(1_S)=1_\Gamma$ yields an embedding as well. Thus, in any setting where embeddability plays a role, the semigroup $\langle B\mid R\rangle^+$ will be understood as a monoid. For instance, $\F_d^+$ will be understood as the \textbf{free monoid} on $d$ elements by adjoining the empty word $\varepsilon$ to the free semigroup generated by $d$ elements, etc.

Embeddable semigroups are necessarily \textbf{bicancellative}, meaning that $a=b$ whenever $ac=bc$ or $ca=cb$ hold. None\-the\-less, not every bicancellative semigroup can be embedded into a group \cite{malcev}. In fact, given a semigroup presentation $S=\langle B\mid R\rangle^+$, determining whether $S$ can be embedded into its free $S$-group, or more generally into an arbitrary group, is not an easy matter. The relevant thing about the free $S$-group is that it characterizes embeddability, meaning that a semigroup $S$ can be embedded into a group if and only if it can be embedded into its free $S$-group \cite[Theorem 12.4]{CliffordII}.

A useful result due to Adian \cite{adjan1966defining} states that bicancellative one-relator finitely presented semigroups can be embedded into a group.

\subsubsection{The left reversible case}

A semigroup $S$ is said to be \textbf{left reversible} if for every $s,t\in S$ there are $x,y\in S$ such that $sx=ty$, or equivalently if for all $s,t\in S$, $sS\cap tS\neq \varnothing$. Groups and Abelian semigroups are left reversible semigroups. In contrast, the free monoid $\F_d^+$ is not left reversible for $d \geq 2$, since $a_1\F_d^+\cap a_d\F_d^+=\varnothing$.

In \cite[Theorem 1]{ore1931}, Ore showed that reversibility is a sufficient condition for a bicancellative semigroup to be embeddable into a group. In this scenario, the possible receiving $S$-groups have a nice characterization, for which we need the following definition:

\begin{definition}
Let $S$ be a semigroup. A receiving $S$-group $\receivingSgroup= (G,\eta)$ is a \textbf{group of right fractions} of $S$ if for every $g\in G$ there exist $s,t\in S$ such that $g=\eta(s)\eta(t)^{-1}$.
\end{definition}

If $\receivingSgroup$ and $\receivingSgroup'$ are groups of right fractions of $S$, then they are isomorphic as $S$-groups \cite[Theorem 1.25]{Clifford}, and the group of right fractions may thus be denoted by $\GrRightFrac{S} = (\Gamma_S,\gamma)$.

\begin{remark}\label{rem:unique_receiving_group}
In \cite{dubreil1943problemes}, Dubreil proved that if $S$ is a bicancellative semigroup, then $S$ is left reversible if and only if $\GrRightFrac{S}$ exists. Whenever the group of right fractions of $S$ exists, it is the only receiving $S$-group up to isomorphism of $S$-groups, and it is hence isomorphic to the free $S$-group \cite[Corollary 2.20 and Remark 3.4]{bricenobustosdonoso1}.
\end{remark}

\begin{remark}
If $S$ is embeddable but not left reversible, then receiving $S$-groups are not necessarily unique modulo isomorphism of $S$-groups. This is shown in \cite[Example 2.27]{bricenobustosdonoso1}, where both $\text{BS}(1,2)$ and $\F_2$---paired with adequate embeddings---play the role of receiving $\F_2^+$-groups, but are non isomorphic even as groups.
\end{remark}

\subsection{Semigroup actions and topological natural extensions}

We will understand an \textbf{action} $\alpha$ of $S$ over a set $X$, denoted as $S\overset{\alpha}{\acts} X$, by a function $\alpha\colon S\times X\rightarrow X$ such that $\alpha(s,\alpha(t,x))=\alpha(st,x)$ for all $x \in X$ and $s,t\in S$. If $S$ is a monoid, we additionally request that $\alpha(1_S,x)=x$ for all $x \in X$. Most of the time $\alpha(s,x)$ will be written simply as $s \cdot x$, and the function $\alpha(s,\cdot)\colon X\rightarrow X$ as simply $\alpha_s$ or $s$. An action $S\acts X$ is said to be \textbf{surjective} if $\alpha_s$ is surjective for each $s\in S$. Given two monoids $S$ and $T$, a monoid morphism $\theta\colon S\to T$ and two actions $S\acts X$ and $T\acts Y$, a function $\varphi\colon Y\rightarrow X$ will be called $\theta$\textbf{-equivariant} if $\varphi(\theta(s)\cdot y)=s\cdot \varphi(x)$ for all $y\in Y$ and $s\in S$. If $S=T$ and $\theta=\text{id}_{S}$, we say that $\varphi$ is \textbf{equivariant}. A subset $A\subseteq X$ is $S$\textbf{-invariant} for an action $S\overset{\alpha}{\acts}X$ if $sA:=\{s\cdot x:x\in A\}\subseteq A$ for all $s\in S$, and \textbf{completely} $S$\textbf{-invariant} if $sA=A$ for every $s\in S$. In this case it makes sense to consider the action $S\acts A$ given by restricting $\alpha|_{S\times A}\colon S\times A\rightarrow A$. Given $x \in X$, we define its $\textbf{$S$-orbit}$ as the $S$-invariant set $Sx = \{s \cdot x: s \in S\}$.

\subsubsection{Structure preserving actions}

Suppose that $X$ is a compact metric space. We will denote by $\mathcal{B}(X)$ the associated Borel $\sigma$-algebra. Given a Borel probability measure $\mu$ on $(X,\mathcal{B}(X))$, we will denote by $(X,\mu)$ the corresponding Borel probability space.

A \textbf{continuous $S$-action} (or simply \textbf{continuous action}, if $S$ is implicit) will be an action $S\overset\alpha\acts X$ such that $\alpha_s$ is continuous for each $s\in S$. Given two continuous actions $S\overset{\alpha}{\acts} X$ and $S\overset{\beta}{\acts} Y$, a continuous equivariant function $\varphi\colon Y\to X$ is called a \textbf{topological factor map} if it is surjective, and a \textbf{topological conjugacy} if it is a homeomorphism. In the first case, we say that $\alpha$ is a \textbf{topological factor} of $\beta$ (and that $\beta$ is a \textbf{topological extension} of $\alpha$), while in the second we say that both actions are \textbf{topologically conjugate}.

Given two Borel probability spaces $(X,\mu)$ and $(Y,\nu)$, a measurable map $\varphi\colon X \rightarrow Y$ is \textbf{measure preserving} if $\mu(\varphi^{-1}A)=\nu(A)$ for all $A\in \mathcal{B}(Y)$. Given a continuous action $S\overset{\alpha}{\acts}X$, we say that $\mu$ is $S$\textbf{-invariant} if each $\alpha_s$ is measure-preserving, i.e., if $\mu(s^{-1}A)=\mu(A)$ for all $A\in \mathcal{B}(X)$ and $s\in S$. In this case, we also say that $\alpha$ is a \textbf{probability measure-preserving (p.m.p.)} action, and denote it by $S\acts(X,\mu)$. 
Given two p.m.p. actions $S\overset{\alpha}{\acts} (X,\mu)$ and $S \overset{\beta}{\acts} (Y,\nu)$, we say that $\alpha$ is \textbf{factor} of $\beta$ (and that $\beta$ is an \textbf{extension} of $\alpha$) if there exist a full measure $S$-invariant sets $X'\subseteq X$ and $Y' \subseteq Y$ and a measure preserving equivariant map $\varphi\colon Y' \rightarrow X'$. In this case, $\varphi$ is called a \textbf{factor map}. If $\varphi$ is moreover a bi-measurable map, we say that $\alpha$ and $\beta$ are \textbf{(measure-theoretically) conjugate} and that $\varphi$ is a \textbf{(measure-theoretical) conjugacy}.

The space of Borel probability measures on $(X,\mathcal{B}(X))$ will be denoted by $\mathcal{M}(X)$. We can endow $\mathcal{M}(X)$ with the weak-* topology, which is metrizable and compact. Convergence is characterized by: 
$$\mu_n\to \mu\iff \int_Xf\,d\mu_n\to \int_{X}f\,d\mu \quad \text{for all }f\in \mathcal{C}(X),$$
and a basis for the topology consists of the sets
$$V(\mu,\mathcal{F},\varepsilon)=\left\{\mu'\in \mathcal{M}(X):\left|\int f\,d\mu'-\int f\,d\mu\right|<\varepsilon, \forall f\in \mathcal{F}\right\},$$
where $\varepsilon>0$, $\mathcal{F}\subseteq \mathcal{C}(X)$ is finite, and $\mu\in \mathcal{M}(X)$.

Given a continuous action $S\acts X$, the space of $S$-invariant measures on $(X,\mathcal{B}(X))$, denoted by $\mathcal{M}_S(X)$, is convex and weak-* closed in $\mathcal{M}(X)$. Depending on $S\acts X$, the space $\mathcal{M}_S(X)$ may be empty or not. A measure $\mu\in \mathcal{M}_S(X)$ is \textbf{ergodic} if every set $A\in \mathcal{B}(X)$ such that $\mu(s^{-1}A\triangle A)=0$ for all $s\in S$ satisfies $\mu(A)\in \{0,1\}$. 

Given $\mu\in \mathcal{M}(X)$, we define the \textbf{support} of $\mu$ as
$$\supp(\mu)=\{x\in X:\mu(U)>0\text{ for every open neighborhood }U\text{ of }x\}.$$
The set $\supp(\mu)$ is closed, and $\mu(A)=0$ whenever $A\cap \supp(\mu)=\varnothing$.

\begin{remark}
\label{rem:supp-inv}
If $\mu\in \mathcal{M}_S(X)$, then $\supp(\mu)$ is an $S$-invariant subset. Indeed, take $x \in \supp(\mu)$ and $s \in S$. Given an open neighborhood $U$ of $s \cdot x$, we have that $s^{-1}U$ is an open neighborhood of $x$, so $\mu(U) = \mu(s^{-1}U) > 0$. This proves that $s \cdot x \in \supp(\mu)$, so $\supp(\mu)$ is $S$-invariant.
\end{remark}

\subsubsection{Shift actions}

Given a compact metric space $\mathcal{A}$, we consider the product space $\mathcal{A}^S$ of all functions $x\colon S \to \mathcal{A}$ endowed with the product topology. There is a natural continuous and surjective action  $S \acts \mathcal{A}^S$ given by
$$
(s\cdot x)(t)=x(ts) \quad  \text{ for all } s,t\in S,
$$
which will be called the \textbf{shift action} and denoted by $\sigma$. An important special case is when $\mathcal{A}$ is a finite set---an \textbf{alphabet}---endowed with the discrete topology. If $|\mathcal{A}| \geq 2$, the space of configurations $\mathcal{A}^S$ is a Cantor space, and together with the action $S\acts \mathcal{A}^S$ is called the \textbf{full} $S$\textbf{-shift}. A closed $S$-invariant subset of $ \mathcal{A}^S$ will be called an $S$\textbf{-subshift}.

Given subsets $Q \subseteq P \subseteq S$ and $p \in \mathcal{A}^P$, we define the {\bf cylinder set induced by $p$ at $Q$} as
$$
[p;Q] = \{x \in \mathcal{A}^S: x(t) = p(t) \text{ for all } t \in Q\}.
$$
The collection of all cylinder sets $[x;Q]$ with $x\in \mathcal{A}^S$ and $Q$ finite is a basis of clopen sets for the topology of $\mathcal{A}^S$.

\subsection{Periodic configurations in groups}
Let $G$ be a group and $\mathcal{A}^G$ be the full $G$-shift for some alphabet $\mathcal{A}$. An element $x\in \mathcal{A}^G$ is \textbf{periodic} if it has a finite $G$-orbit, that is, if $|Gx|<\infty$. Equivalently, $x\in \mathcal{A}^G$ is periodic if and only if its \textbf{stabilizer}
$$
\text{Stab}_G(x) = \{g \in G: g \cdot x = x\}
$$
is a finite-index subgroup of $G$, i.e., $[G : \text{Stab}_G(x)] < \infty$. These notions extend to any group action $G\acts X$ , and we denote by $\Periodic(X,G)$ the set of periodic points in $X$.

A group $G$ is \textbf{residually finite} if for every pair of distinct elements $g,h\in G$ there is a finite group $F$ and a group morphism $\theta\colon G\rightarrow F$ such that $\theta(g)\neq \theta (h)$. More generally, if $(\textsc{p})$ is a class of semigroups, we say that a semigroup $S$ is \textbf{residually $(\textsc{p})$} if for any distinct $s,t\in S$ there exists a semigroup $T$ in the class $(\textsc{p})$ and a semigroup morphism $\theta\colon S\to T$ such that $\theta(s)\ne\theta(t)$. Residually finite groups have several equivalent characterizations, some of which are listed below: 

\begin{proposition}\label{prop:residually_finite_groups}
    Let $G$ be a group. The following are equivalent:
    \begin{enumerate}
        \item[\textup{(i)}] $G$ is residually finite.
        \item[\textup{(ii)}] For any $g\in G$ there exists a finite group $F$ and a group morphism $\theta\colon G\to F$ such that $\theta(g)\ne 1_F$,
        \item[\textup{(iii)}] $G$ is isomorphic to a subgroup of a Cartesian product of (possibly infinitely many) finite groups.
        \item[\textup{(iv)}] The set $\Periodic(\mathcal{A}^G,G)$ is dense in $\mathcal{A}^G$ for every finite alphabet $\mathcal{A}$.
    \end{enumerate}
\end{proposition}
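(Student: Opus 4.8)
The plan is to prove Proposition~\ref{prop:residually_finite_groups} by establishing the cycle of implications (i)\,$\Rightarrow$\,(iii)\,$\Rightarrow$\,(iv)\,$\Rightarrow$\,(i) together with the easy equivalence (i)\,$\Leftrightarrow$\,(ii). For (i)\,$\Leftrightarrow$\,(ii), I would note that taking $h=1_G$ in (i) gives (ii), and conversely, given distinct $g,h\in G$, apply (ii) to $g h^{-1}\neq 1_G$ to obtain a finite quotient separating $g$ and $h$; this uses only that a group morphism respects multiplication and inverses.

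For (i)\,$\Rightarrow$\,(iii), the idea is to take the product of all finite quotients. More precisely, let $\mathcal{N}$ be the collection of finite-index normal subgroups $N\trianglelefteq G$ obtained as kernels of morphisms to finite groups, and consider the diagonal map $\Phi\colon G\to \prod_{N\in\mathcal{N}} G/N$, $g\mapsto (gN)_{N}$. Residual finiteness (in the form (ii)) guarantees that for each $g\neq 1_G$ some coordinate is nontrivial, so $\ker\Phi = \bigcap_{N\in\mathcal{N}} N = \{1_G\}$, hence $\Phi$ is injective and identifies $G$ with a subgroup of a Cartesian product of finite groups. Conversely (iii)\,$\Rightarrow$\,(i) would be immediate if needed, since projections onto coordinates separate distinct elements, but the cyclic argument does not require it.

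For (iii)\,$\Rightarrow$\,(iv), suppose $G\le \prod_{i\in I} F_i$ with each $F_i$ finite, and fix a finite alphabet $\mathcal{A}$. To show $\Periodic(\mathcal{A}^G,G)$ is dense, it suffices to approximate an arbitrary $x\in\mathcal{A}^G$ on an arbitrary finite ``window'' $Q\subseteq G$, i.e., to produce a periodic $y$ with $y|_Q = x|_Q$. Choose a finite index set $J\subseteq I$ large enough that the projection $\pi_J\colon G\to \prod_{i\in J}F_i$ is injective on the finite set $Q$ and, more usefully, such that the finite-index normal subgroup $N=\ker\pi_J$ satisfies $Q$ being contained in distinct cosets of $N$ (which one arranges by enlarging $J$ so that $\pi_J$ separates the finitely many elements of $QQ^{-1}\setminus\{1_G\}$). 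Then pick a transversal $T$ for $G/N$ with $Q\subseteq T$, define $y$ on the coset $tN$ to be the constant-on-cosets extension $y(tn) = x(t)$ for the unique $t\in T$ representing that coset, and check that $y$ is $N$-periodic: $\mathrm{Stab}_G(y)\supseteq N$ has finite index, and $y|_Q = x|_Q$ by construction. Finally, (iv)\,$\Rightarrow$\,(i): given distinct $g,h\in G$, approximate a point $x\in\mathcal{A}^G$ that distinguishes $g$ and $h$ in a window (e.g.\ with $|\mathcal{A}|\ge 2$, take $x$ with $x(g)\neq x(h)$) by a periodic point $y$ agreeing with $x$ on $\{g,h\}$; then $y(g)\neq y(h)$ forces $g\,\mathrm{Stab}_G(y)\neq h\,\mathrm{Stab}_G(y)$, so the morphism $G\to \mathrm{Sym}(G/\mathrm{Stab}_G(y))$ (a finite group, as $\mathrm{Stab}_G(y)$ has finite index) separates $g$ from $h$.

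I expect the main obstacle to be the bookkeeping in (iii)\,$\Rightarrow$\,(iv): one must be careful that the periodic configuration $y$ is genuinely well-defined, which requires choosing the finite-index subgroup $N$ so that the window $Q$ meets each coset of $N$ at most once (not merely that the coordinates separate $Q$), and then extending $N$-periodically via a coset transversal. The argument is essentially that of \cite[Theorem 2.7.1]{cellular_automata}, so I would either cite it or reproduce this transversal construction in full; everything else is routine manipulation of finite quotients and the definition of the weak-* (here, product-topology) basis of cylinder sets.
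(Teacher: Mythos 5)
Your proposal is correct. The paper itself gives no proof of Proposition \ref{prop:residually_finite_groups} --- it is stated as a list of known characterizations, with the nontrivial equivalence (i)$\Leftrightarrow$(iv) attributed to \cite[Theorem 2.7.1]{cellular_automata} --- so there is nothing to compare line by line; but your cycle (i)$\Rightarrow$(iii)$\Rightarrow$(iv)$\Rightarrow$(i) together with (i)$\Leftrightarrow$(ii) is the standard argument, and your two key steps (building a periodic configuration by pulling back a labelling of a finite quotient, and recovering a separating morphism from $G\to\mathrm{Sym}(G/\mathrm{Stab}_G(y))$ for a periodic $y$) are exactly the construction the authors later use to prove the more general Theorem \ref{thm:A} for left reductive semigroups, specialized to groups. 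The one point you rightly flag --- choosing $N=\ker\pi_J$ so that the window $Q$ meets each coset at most once before extending along a transversal --- is handled correctly, so the proof is complete as written.
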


Evidently, every finite group is automatically residually finite. Some less trivial examples of residually finite groups include $\Z^d$ and all free groups, including $\F_d$ for all $d\in\N$; similarly, the linear groups $\mathrm{GL}_d(\Z)$ for any $d\ge 1$ have this property. Furthermore, the class of residually finite groups is closed under taking subgroups, Cartesian products and inverse limits (but not under quotients). In contrast, \emph{divisible} groups, that is, those for which the equation $x^n=g$ has a solution for any $n\ge 1$ and $g\in G$, are never residually finite; these include $\mathbb{Q},\R$ and $\C$. Similarly, \emph{non-Hopfian} groups, that is, those for which a surjective, non-injective morphism $G\to G$ exists, cannot be residually finite either; an example of such a group is the Baumslag--Solitar group $\mathrm{BS}(2,3)$ \cite{kaiser1}.

\section{Periodicity for semigroup actions}
\label{sec2}

In general, periodicity is a concept that tries to capture both finiteness and circularity. In the case of group actions, due to invertibility, the finite orbit condition is sufficient for precluding the existence of any transient portion in the orbit, thus assuring a circular behavior. However, even in the case of a single non-invertible surjective endomorphism, that is, an action of the semigroup $\N$, there could exist finite orbits with a transient initial section, yielding the distinction between periodic and pre-periodic points. 

\subsection{Periodic points and finitely supported invariant measures}

The definition of periodicity in the context of semigroup actions is subtle and there may be various notions that capture it. However, one of our main purposes is to study finitely supported invariant measures. With this goal in mind, we introduce the following definition of periodicity.

\begin{definition}
Let $S\acts X$ be a semigroup action. An element $x\in X$ will be called \textbf{pre-periodic} if $|Sx|<\infty$, and \textbf{periodic} if $|Sx|<\infty$ and the set $Sx$ is completely $S$-invariant, that is, $tSx=Sx$ for all $t\in S$. The set of $S$-periodic points of $X$ will be denoted as $\Periodic(X,S)$.
\end{definition}

\begin{example}
Complete $S$-invariance does not necessarily imply $|Sx|<\infty$, although in the $S=\N$ case it does. For instance, the element $x\in \{0,1\}^{\N^2}$ given by $x(n,m)=1$ if and only if $n=m$, has infinite translates, and every element of $\N^2$ acts bijectively upon its orbit.    
\end{example}

A natural question is whether, as in the case of $\N$ actions, the orbit of every pre-periodic point contains a periodic point or, equivalently, the support of an $S$-invariant measure. However, this is not necessarily the case for general semigroup actions. 

\begin{example}
\label{exmp:tree}
Consider $\F_2^{+}=\langle a,b\rangle^+$ acting on the full $\F_2^+$-shift $\{0,1\}^{\F_2^+}$. Let $x\in \{0,1\}^{\F_2^{+}}$ be the configuration defined by $x(\varepsilon) = 0$ and, for all $s\in \F_2^+$, $x(as)=1$ and $x(bs)=0$. The orbit of this configuration does not contain a periodic point, as can be seen in Figure \ref{bad_orbit_figure}, so there is no subset of this orbit which can support an invariant measure. Although these kind of orbits exhibit a valid form of periodicity, they will not be taken in consideration here.

\begin{figure}[ht]
\centering
\begin{tikzcd}
\arrow[loop left]{r}{b} x
\arrow[rr, bend left, "a"]
& &  \arrow[ll, bend left, "b"] y \arrow[loop right]{r}{a} 
\end{tikzcd}\hspace{1cm} %
\begin{tikzcd}[column sep=-0.8em]
      &   &   &   &   &   &   & |[draw,circle]| 0\arrow[dllll,dash,"a"'] \arrow[drrrr,dash,"b"] \\
      &   &   & |[draw,circle]| 1\arrow[dll,dash]\arrow[drr,dash] &   &   &   &   &   &   &   & |[draw,circle]| 0 \arrow[dll,dash]\arrow[drr,dash] \\
      & |[draw,circle]|1\arrow[dl,dash]\arrow[dr,dash] &   &   &   & |[draw,circle]|0\arrow[dl,dash]\arrow[dr,dash] &   &   &   & |[draw,circle]|1\arrow[dl,dash]\arrow[dr,dash] &   &   &   & |[draw,circle]|0\arrow[dl,dash]\arrow[dr,dash]\\
    |[draw,circle]|1 &   & |[draw,circle]|0 &   & |[draw,circle]|1 &   & |[draw,circle]|0 &   & |[draw,circle]|1 &   & |[draw,circle]|0 &   & |[draw,circle]|1 &   & |[draw,circle]|0
\end{tikzcd}

\caption{A diagram of the $\F_2^{+}$-orbit $\{x,y\}$ of $x$ and the representation of $x$ as a labeling of $\F_2^+$.}		
\label{bad_orbit_figure}
\end{figure}
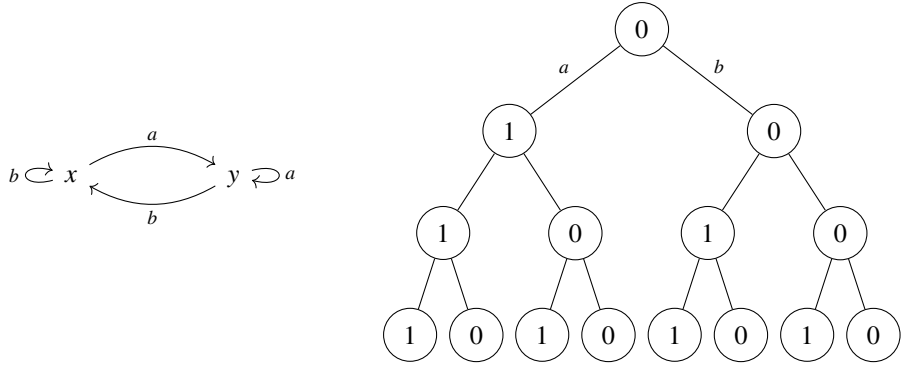

\end{example}

\begin{remark}
\label{rem:congruence}
		In the context of group actions, we saw that every periodic element induces a finite-index subgroup (namely, $\text{Stab}(x)$). For semigroup actions, if $x\in X$ has finite $S$-orbit, it induces a congruence on $S$ by
		$$s\mathrel{\mathcal{R}_x} t\iff s\cdot y=t\cdot y\text{ for all }y\in Sx\cup\{x\}.$$
		Moreover, $\mathcal{R}_x$ has \textbf{finite index}, i.e., finitely many equivalence classes, so in order to admit periodic points, a semigroup must contain finite-index congruences. Note that, in the group case, the analogous to the congruence $\mathcal{R}_x$ is the largest normal subgroup contained in the stabilizer, i.e., the intersection of the stabilizers of all elements in $Gx$.
	\end{remark}

Note that every element $t\in S$ defines a function $t\colon Sx\to Sx$ by $y\mapsto t\cdot y$. The \textbf{full transformation monoid} of $Sx$ will be the set of all functions $Sx\to Sx$ together with function composition, denoted by $\mathcal{T}(Sx)$. We have that $s\mathrel{\mathcal{R}_x} t$ if and only if $s$ and $t$ define the same elements of $\mathcal{T}(Sx)$, obtaining an embedding $S/\mathcal{R}_x\to \mathcal{T}(Sx)$. By a subgroup of $\mathcal{T}(Sx)$, we will understand a submonoid of $\mathcal{T}(Sx)$ that is a group.

\begin{proposition}
    Let $S\acts X$ be a semigroup action and $x\in X$. Then, $x$ is pre-periodic if and only if $S/\mathcal{R}_x$ is a finite semigroup, and $x$ is periodic if and only if $S/\mathcal{R}_x$ is a finite subgroup of $\mathcal{T}(Sx)$.
\end{proposition}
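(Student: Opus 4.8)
The statement has two equivalences to establish, and the key observation is that the congruence $\mathcal{R}_x$ always embeds $S/\mathcal{R}_x$ into the full transformation monoid $\mathcal{T}(Sx)$ via $[s]_{\mathcal{R}_x} \mapsto (y \mapsto s \cdot y)$, as noted just before the statement. So the plan is to unwind the definitions on both sides through this embedding. First I would handle the pre-periodic case: if $x$ is pre-periodic, then $Sx$ is finite, so $\mathcal{T}(Sx)$ is a finite monoid, and since $S/\mathcal{R}_x$ embeds into it, $S/\mathcal{R}_x$ is finite. Conversely, if $S/\mathcal{R}_x$ is finite, I would argue that the map $S/\mathcal{R}_x \to Sx$ sending $[s]_{\mathcal{R}_x}$ to $s \cdot x$ (or rather, if $S$ is a monoid, noting $Sx = \{s \cdot x : s \in S\}$ and using that $s \mapsto s\cdot x$ factors through $\mathcal{R}_x$) is well-defined and surjective, hence $|Sx| \le |S/\mathcal{R}_x| < \infty$. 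A small care point: when $S$ is not a monoid, $x$ itself need not be of the form $s \cdot x$, but the definition of $\mathcal{R}_x$ quantifies over $y \in Sx \cup \{x\}$, and surjectivity of $[s] \mapsto s \cdot x$ onto $Sx$ still holds directly; I would just make sure the bookkeeping matches whichever convention the paper uses (the paper emphasizes monoids, so this is likely not an issue).

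For the periodic case, I would assume $x$ is pre-periodic throughout (both conditions being refined — "periodic" includes $|Sx| < \infty$ and "finite subgroup" includes "finite"), so by the first part $M := S/\mathcal{R}_x$ is a finite submonoid of $\mathcal{T}(Sx)$. The remaining claim is: $Sx$ is completely $S$-invariant (i.e. $t \cdot Sx = Sx$ for all $t \in S$) if and only if $M$ is a subgroup of $\mathcal{T}(Sx)$, that is, every element of $M$ is a bijection of $Sx$ and $M$ is closed under inverses (equivalently, $M$ contains an identity element acting as $\mathrm{id}_{Sx}$ and every element is invertible in $\mathcal{T}(Sx)$ with inverse lying in $M$). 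The forward direction: if $t \cdot Sx = Sx$ for every $t$, then each $t$ acts surjectively on the finite set $Sx$, hence bijectively; so every element of $M$ is a permutation of $Sx$, i.e. $M \le \mathrm{Sym}(Sx)$. Since $M$ is a finite subsemigroup of the finite group $\mathrm{Sym}(Sx)$, it is automatically a subgroup (a finite subsemigroup of a group is a subgroup — each $g \in M$ has some power $g^k = g^{k+m}$ hence $g^m$ is the identity and $g^{m-1}$ is an inverse, all inside $M$). The converse: if $M$ is a subgroup of $\mathcal{T}(Sx)$, then in particular each generator $t$ (image of $t \in S$ in $M$) acts invertibly on $Sx$, hence bijectively, so $t \cdot Sx = Sx$; thus $Sx$ is completely $S$-invariant.

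The step I expect to require the most care is pinning down what "$M$ is a subgroup of $\mathcal{T}(Sx)$" means precisely, and reconciling it with "every $t \in S$ acts bijectively on $Sx$": a priori a submonoid of $\mathcal{T}(Sx)$ consisting of bijections might have its own identity that differs from $\mathrm{id}_{Sx}$, but on the finite set $Sx$ a semigroup of bijections is a subgroup of $\mathrm{Sym}(Sx)$ and its identity must be $\mathrm{id}_{Sx}$ (an idempotent permutation of a finite set is the identity). Once that is observed, the equivalence "subgroup of $\mathcal{T}(Sx)$" $\iff$ "subgroup of $\mathrm{Sym}(Sx)$" $\iff$ "every element acts bijectively" is clean, using throughout the finiteness of $Sx$ and the elementary fact that a finite cancellative (here: consisting of injective maps) semigroup is a group. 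All other steps are routine unwinding of definitions, so I would keep the write-up short: one paragraph for pre-periodicity, one for periodicity, invoking the embedding $S/\mathcal{R}_x \hookrightarrow \mathcal{T}(Sx)$ already established in the text.
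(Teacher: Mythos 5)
Your proof is correct and takes essentially the same approach as the paper: the pre-periodic equivalence is obtained by trading finiteness of $S/\mathcal{R}_x$ against finiteness of $Sx$ through the embedding into $\mathcal{T}(Sx)$ (the paper phrases your surjection $[s]_{\mathcal{R}_x}\mapsto s\cdot x$ contrapositively), and the periodic case is handled exactly as you propose, via $\mathrm{Sym}(Sx)$ and the fact that a finite subsemigroup of a group is a subgroup in one direction, and via an inverse class composing with $t$ to $\mathrm{id}_{Sx}$ (hence $tSx=Sx$) in the other. Your worry about the identity of the subgroup is already settled by the paper's convention that a subgroup of $\mathcal{T}(Sx)$ means a \emph{submonoid} that is a group, so its identity is $\mathrm{id}_{Sx}$ by definition.
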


\begin{proof}
    If $x$ is pre-periodic, there are finitely many functions $Sx\to Sx$, hence finitely many equivalence classes for $\mathcal{R}_x$. Conversely, if $x$ is not pre-periodic, there exists an infinite subset $Q\subseteq S$ such that $s\cdot x\neq s'\cdot x$ for all $s,s'\in Q$ with $s\neq s'$. This immediately implies that $|S/\mathcal{R}_x|\geq |Q|=\infty$.

    If $x$ is periodic, every $t\in S$ defines a bijection $Sx\to Sx$, so there is an injective semigroup morphism $S/\mathcal{R}_x\to \mathrm{Sym}(Sx)$. Since $\mathrm{Sym}(Sx)$ is a finite group, $S/\mathcal{R}_x$ must be a finite group as well. Conversely, suppose that $S/\mathcal{R}_x$ is a subgroup of $\mathcal{T}(Sx)$ and let $t\in S$. There exists some $t'\in S$ such that $[tt']_{\mathcal{R}_x}$ is the identity class in $S/\mathcal{R}_x$, that is, $t\circ t'$ is the identity as a function $Sx\to Sx$. Thus, for any $s\in S$ we have $t\cdot (t's\cdot x)=(tt')\cdot (s\cdot x)=s\cdot x$, i.e., $tSx=Sx$.
\end{proof}

Notice that the action of $\F_2^+$ upon the orbit of the point $x$ defined in Example \ref{exmp:tree} is \textbf{transitive}, i.e., for all $s,t\in S$, $s \cdot x \in S(t \cdot x)$. This holds true for every periodic point.

\begin{proposition}
\label{prop:transitive}
If $x\in X$ is periodic, then the action $S\acts Sx$ is transitive. In particular, two $S$-periodic orbits are either equal or disjoint.
\end{proposition}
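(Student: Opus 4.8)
The plan is to leverage the preceding proposition: since $x$ is periodic, $G := S/\mathcal{R}_x$ is a finite group, and the map sending the class of $t\in S$ to the bijection $\alpha_t\colon Sx\to Sx$, $y\mapsto t\cdot y$, is an isomorphism of $G$ onto a finite subgroup of $\mathrm{Sym}(Sx)$, with $\alpha_{st}=\alpha_s\circ\alpha_t$. Thus $\{\alpha_t:t\in S\}$ is exactly (the image of) $G$, and the $S$-action on $Sx$ is the same as this $G$-action.

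First I would show that $Sx$ is a single $G$-orbit. Since $1_S\cdot x=x$, we have $x\in Sx$, hence
$$
G\cdot x=\{\alpha_t(x):t\in S\}=\{t\cdot x:t\in S\}=Sx,
$$
so $Sx$ coincides with the $G$-orbit of $x$. In particular the $G$-action—equivalently the $S$-action—on $Sx$ is transitive. Concretely, for arbitrary $s,t\in S$ one has $S(t\cdot x)=\{\alpha_u(t\cdot x):u\in S\}=G\cdot(t\cdot x)$, and since $t\cdot x\in Sx=G\cdot x$ while $G$-orbits are $G$-invariant, $G\cdot(t\cdot x)=G\cdot x=Sx\ni s\cdot x$. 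Therefore $s\cdot x\in S(t\cdot x)$, which is precisely the transitivity statement.

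For the second assertion, suppose $x$ and $x'$ are periodic with $Sx\cap Sx'\neq\varnothing$ and pick $w$ in this intersection. Because $Sx$ is completely $S$-invariant and $w\in Sx$, we get $Sw\subseteq Sx$; conversely, transitivity of $S\acts Sx$ (applied to $w\in Sx$) gives $Sx\subseteq Sw$, so $Sw=Sx$. The same argument applied to $x'$ yields $Sw=Sx'$, whence $Sx=Sx'$. Thus two $S$-periodic orbits are either equal or disjoint.

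The argument is short once the preceding proposition is available; the only point requiring care is the reduction of $Sx$ to a single $G$-orbit, for which we use $x\in Sx$ (automatic in the monoid setting that is our main concern). I do not anticipate a real obstacle beyond keeping track of the direction of composition in $\mathcal{T}(Sx)$ against multiplication in $S$.
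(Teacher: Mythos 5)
Your argument is correct (in the monoid setting you flag) and its first half takes a genuinely different route from the paper. For transitivity, the paper argues directly: if some $s\cdot x\notin S(t\cdot x)$, one may assume $x\notin S(t\cdot x)$, and then finiteness of $Sx$ produces a minimal $k\ge 1$ with $t^n\cdot x=t^k\cdot x$ for some $n>k$, so that $t$ identifies the distinct points $t^{n-1}\cdot x$ and $t^{k-1}\cdot x$, contradicting injectivity of $t$ on $Sx$; the ``equal or disjoint'' statement is then deduced essentially as you do. You instead invoke the preceding proposition: $G=S/\mathcal{R}_x$ is a finite group acting by bijections on $Sx$, every element of $G$ is the class of some $u\in S$, so $S(t\cdot x)=G\cdot(t\cdot x)$, and since $x\in Sx$ the set $Sx=G\cdot x$ is a single $G$-orbit; transitivity is then just the orbit-partition property of group actions. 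Your route avoids the power-of-$t$ bookkeeping and makes transparent that $S\acts Sx$ is really a transitive action of the finite group $S/\mathcal{R}_x$; the price is the explicit use of $x\in Sx$, i.e., of $1_S$. That is not a real deficiency relative to the paper: its own proof uses the same fact implicitly (in the case $k=1$ the point $t^{k-1}\cdot x=x$ must lie in $Sx$ for the injectivity contradiction to apply), and without $x\in Sx$ the statement can genuinely fail --- for the identity-free free semigroup on $\{a,b\}$ acting on $\{x,p,q\}$ with $a\cdot x=p$, $b\cdot x=q$ and both generators fixing $p$ and $q$, the point $x$ is periodic in the paper's sense, yet $a\cdot x\notin S(b\cdot x)$. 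So your monoid caveat is exactly the right one, and the rest (the composition convention $\alpha_{st}=\alpha_s\circ\alpha_t$ and the disjointness step via $Sw=Sx=Sx'$) is in order.
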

	
\begin{proof}
Suppose that $S\acts Sx$ is not transitive. Then, there exists $t \in S$ such that $x\not\in S(t \cdot x)$. Let 
$$k=\min\{i\geq 1:\text{ there is a }n>i\text{ with }t^n\cdot x=t^{i}\cdot x\},$$
which exists because $Sx$ is finite. By definition, there is an $n>k$ with $t^n\cdot x=t^k\cdot x$, and we must have $t^{n-1}\cdot x\neq t^{k-1}\cdot x$. Note that 
$$t(t^{n-1}\cdot x)=t^n\cdot x=t^k\cdot x=t(t^{k-1}\cdot x),$$
contradicting the injectivity of $t$ upon $Sx$. Therefore, $S\acts Sx$ is transitive.

For the final statement, let $x_1,x_2\in X$ be two $S$-periodic elements. If $y\in Sx_1\cap Sx_2$, there are elements $s_1,s_2\in S$ such that $y=s_1\cdot x_1=s_2\cdot x_2$. Since the action $S\acts Sx$ is transitive, we may choose an element $t\in S$ with $ts_1\cdot x_1=x_1$, so $ts_2\cdot x_2=x_1$, which implies $Sx_1\subseteq Sx_2$. By symmetry, we get $Sx_2\subseteq Sx_1$, and so both orbits coincide.
\end{proof}

\begin{remark}
Notice that if $x$ is pre-periodic for an $\N$-action, then the transitivity of $\N \acts Sx$ is equivalent to $x$ being periodic. However, Proposition \ref{prop:transitive} combined with Example \ref{exmp:tree} show that periodicity is a strictly stronger notion for general semigroup actions.
\end{remark}

A \textbf{periodic measure} in $\mathcal{M}_S(X)$ will be a measure $\mu\in \mathcal{M}_S(X)$ such that $\supp(\mu)$ is finite. The set of periodic measures will be denoted by $\Per(X,S)$, and the set of periodic ergodic measures in $\mathcal{M}_S(X)$ will be denoted as $\Perg(X,S)$. The following result justifies our chosen definition of periodicity.

\begin{lemma}
    Let $S\acts X$ be a continuous action, and $x\in X$ a pre-periodic point. Then, there is an $S$-invariant measure $\mu\in \mathcal{M}_S(X)$ with $\textup{supp}(\mu)=Sx$ if and only if $x$ is periodic.
\end{lemma}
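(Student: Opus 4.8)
The plan is to prove both directions by exploiting the transitivity established in Proposition \ref{prop:transitive}. For the ``if'' direction, assume $x$ is periodic, so that $Sx$ is finite and completely $S$-invariant, and by Proposition \ref{prop:transitive} the action $S \acts Sx$ is transitive. First I would put the uniform (counting) measure on $Sx$, i.e. $\mu = \frac{1}{|Sx|}\sum_{y \in Sx} \delta_y$, viewed as a Borel measure on $X$ supported exactly on the finite set $Sx$. To check $S$-invariance, fix $s \in S$; the pushforward $s_*\mu$ is again uniformly distributed on $sSx = Sx$ (using complete $S$-invariance), but I must be careful that the map $y \mapsto s \cdot y$ is a \emph{bijection} of $Sx$, not merely a surjection, so that no mass is collapsed — this follows because $S/\mathcal{R}_x$ is a finite group acting by permutations on $Sx$ (the proposition preceding Proposition \ref{prop:transitive}), hence each $s$ acts as a permutation of $Sx$. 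Thus $s_*\mu = \mu$ and $\supp(\mu) = Sx$.

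For the ``only if'' direction, suppose $\mu \in \mathcal{M}_S(X)$ has $\supp(\mu) = Sx$; I must show $Sx$ is completely $S$-invariant, i.e. $tSx = Sx$ for all $t \in S$. Since $x$ is pre-periodic, $Sx$ is finite, so $\mu$ is a finitely supported measure on the finite set $Sx$, assigning positive mass to each of its points (that is what $\supp(\mu) = Sx$ means). Fix $t \in S$. By Remark \ref{rem:supp-inv}, $\supp(\mu)$ is $S$-invariant, so $tSx \subseteq Sx$ automatically; the content is the reverse inclusion. For that, I would use invariance of $\mu$: for any $y \in Sx$, $\mu(\{y\}) = \mu(t^{-1}\{y\}) = \sum_{z \in Sx,\, t\cdot z = y} \mu(\{z\})$. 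Summing this identity over all $y \in Sx$ and comparing with $\sum_{z\in Sx}\mu(\{z\}) = 1$ — together with the fact that every $z \in Sx$ lies in exactly one preimage set $t^{-1}\{t\cdot z\}$ — forces the map $z \mapsto t \cdot z$ on the finite set $Sx$ to be injective (otherwise some $y \in Sx$ with nonempty but ``overloaded'' preimage would force a deficit elsewhere, but all weights are strictly positive), hence bijective, hence surjective, giving $tSx = Sx$.

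The main obstacle I anticipate is the bookkeeping in the ``only if'' direction: one needs the strict positivity of $\mu$ on \emph{every} point of $Sx$ to convert the measure-preserving identity into injectivity of the finite self-map $y \mapsto t \cdot y$. A clean way to package this is: a measure-preserving self-map of a finite probability space in which every atom has positive mass must be a bijection — this is elementary but worth stating as the crux. An alternative, perhaps smoother route is to invoke the proposition preceding Proposition \ref{prop:transitive}: it suffices to show $S/\mathcal{R}_x$ is a subgroup of $\mathcal{T}(Sx)$, and since $S/\mathcal{R}_x \hookrightarrow \mathcal{T}(Sx)$ is a finite submonoid, it is a group precisely when each generating transformation is a bijection, which again reduces to the same injectivity statement. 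I would present the direct argument above and remark on this equivalent formulation.
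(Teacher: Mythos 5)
Your proposal is correct and follows essentially the same route as the paper: the uniform measure on $Sx$ for the ``if'' direction, and for the ``only if'' direction the observation that a non-bijective $t\colon Sx\to Sx$ must miss some $y\in Sx$, whose singleton then gets measure $\mu(\{y\})=\mu(t^{-1}\{y\})=0$, contradicting $y\in\supp(\mu)$. (Your ``summing and comparing'' framing is a slight detour—the totals balance automatically since the fibers partition $Sx$—but the parenthetical pigeonhole/deficit argument you give is exactly the paper's proof.)
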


\begin{proof}
    Suppose $x$ is not periodic and choose $t\in S$ such that $t\colon Sx\rightarrow Sx$ is not bijective. Then, since $Sx$ is finite, there exists $y\in Sx$ such that $t^{-1}(y)\cap (Sx)=\varnothing$. Take any $S$-invariant measure $\mu\in \mathcal{M}_S(X)$ with $\textup{supp}(\mu)\subseteq Sx$. Then, $\mu(X-Sx)=0$, so 
    $$\mu(\{y\})=\mu(t^{-1}(y))=\mu(t^{-1}(y)\cap Sx)=0.$$
    Thus, $\text{supp}(\mu)\subsetneq Sx$, so no invariant measure can be supported upon the whole of $Sx$. Conversely, assume each $t \in S$ acts as a bijection of $Sx$ and consider the measure given by
    $$\mu(A)=\frac{|A\cap Sx|}{|Sx|} \quad \text{ for every } A\in \mathcal{B}(X).$$
    Define $\varphi_t\colon t^{-1}A\cap Sx\rightarrow A\cap Sx$ by $y\mapsto t\cdot y$. For each $t \in S$, the function $\varphi_t$ is injective as a consequence of $t$ being injective upon $Sx$. Choose any $y\in A\cap Sx$. Since $tSx=Sx$, there exists $y'\in Sx$ with $t\cdot y'=y\in A$, so $y'\in t^{-1}A$ as well, showing surjectivity of $\varphi_t$. Thus, $\mu$ satisfies
    $$\mu(t^{-1}A)=\frac{|t^{-1}A\cap Sx|}{|Sx|}=\frac{|A\cap Sx|}{|Sx|}=\mu(A),$$
    as we wanted.
\end{proof}

We have the following characterizations of periodic measures.
	
	\begin{proposition}\label{prop:structure_periodic_measures}
		Let $S\acts X$ be a continuous action and $\mu\in \mathcal{M}_S(X)$. Then,  
		\begin{enumerate}
			\item[\textup{(i)}] $\mu\in \Per(X,S)$ if and only if $\supp(\mu)$ is a finite disjoint union of periodic $S$-orbits,
			\item[\textup{(ii)}] $\mu\in \Perg(X,S)$ if and only if $\supp(\mu)$ corresponds to a single periodic $S$-orbit.
		\end{enumerate}
		
	\end{proposition}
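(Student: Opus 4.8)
The plan is to reduce everything to the structure of $\supp(\mu)$ as a finite $S$-invariant set, using the previous lemma and Proposition \ref{prop:transitive}. First I would handle part (i). For the forward direction, suppose $\mu \in \Per(X,S)$, so $F := \supp(\mu)$ is finite. By Remark \ref{rem:supp-inv}, $F$ is $S$-invariant, hence it is a finite union of $S$-orbits; pick a minimal such orbit $Sx \subseteq F$ (minimal under inclusion among orbits meeting $F$, which exists by finiteness). The key sub-step is to show each minimal orbit is in fact a \emph{periodic} orbit: minimality forces $S \acts Sx$ to be transitive (if $x \notin S(t\cdot x)$ for some $t$, then $S(t\cdot x) \subsetneq Sx$ would be a strictly smaller invariant set, and one checks it still meets $\supp(\mu)$ because $\mu$ restricted appropriately is still invariant — more carefully, I would argue directly that every element of $S$ acts bijectively on $Sx$: if some $t$ were not injective on $Sx$ then by the argument in the previous lemma some point of $Sx$ would get $\mu$-measure zero, and then iterating, $\supp(\mu) \cap Sx$ would be a proper subset that is still the support of an invariant measure, eventually contradicting minimality). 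Once each minimal orbit is periodic, Proposition \ref{prop:transitive} tells us distinct periodic orbits are disjoint, so the periodic orbits contained in $F$ are pairwise disjoint; I must then check that $F$ equals their union, i.e., no point of $F$ lies outside every periodic orbit. This follows because $F$ is a finite $S$-invariant set and every such set's "eventual image" $\bigcap_{n} t^n \cdot F$ is nonempty and consists of periodic points, combined with the fact (from the lemma's argument) that $\mu$-mass concentrates on points lying in periodic orbits. For the converse, if $\supp(\mu)$ is a finite disjoint union of periodic orbits, it is finite, so $\mu \in \Per(X,S)$ by definition.

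For part (ii), recall that a measure is ergodic iff it is an extreme point of $\mathcal{M}_S(X)$, and for finitely supported invariant measures I would use the explicit description. Given $\mu \in \Perg(X,S)$: by (i), $\supp(\mu) = Sx_1 \sqcup \dots \sqcup Sx_k$ is a disjoint union of periodic orbits, and on each $Sx_i$ the uniform measure $\mu_i(A) = |A \cap Sx_i|/|Sx_i|$ is $S$-invariant (this is exactly the construction in the previous lemma). If $k \geq 2$, then $\mu$ is a nontrivial convex combination of the $\mu_i$ (one checks $\mu = \sum_i \mu(Sx_i)\,\mu_i$ with all weights positive since each $Sx_i$ meets $\supp(\mu)$), contradicting ergodicity; hence $k = 1$, and $\supp(\mu)$ is a single periodic orbit. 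Conversely, if $\supp(\mu) = Sx$ is a single periodic orbit, then $\mu$ must be the uniform measure on $Sx$ (any invariant measure supported on a transitive finite orbit assigns equal mass to all points, since the bijections $t \colon Sx \to Sx$ act transitively and preserve $\mu$), and one verifies this measure is ergodic: any $S$-invariant Borel set $A$ with $\mu(s^{-1}A \triangle A) = 0$ for all $s$ must, by transitivity, contain either all or none of $Sx$ up to measure zero, forcing $\mu(A) \in \{0,1\}$.

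The main obstacle I anticipate is the forward direction of (i): cleanly proving that every orbit inside $\supp(\mu)$ that is "terminal" is genuinely periodic (complete $S$-invariance, not just finiteness and transitivity — recall Example \ref{exmp:tree} shows transitivity alone is weaker). The cleanest route is probably to avoid minimality arguments and instead argue pointwise: for $x \in \supp(\mu)$, the restriction of the dynamics to the finite set $Sx$ has a well-defined "eventual range" $R = \bigcap_{t \in S}\bigcap_{n \geq 1} t^n Sx$ (or more carefully the set of points $y \in Sx$ with $y \in S(s\cdot y)$ for all relevant $s$), and the lemma's mass-vanishing argument shows $\mu$ gives zero mass to $Sx \setminus R$; but $x \in \supp(\mu)$ forces $x \in R$, and then $Sx = R$ is a periodic orbit by Proposition \ref{prop:transitive}'s proof technique. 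I would need to assemble this carefully, but it is essentially a repackaging of the preceding lemma applied orbit by orbit, so no genuinely new idea is required — just bookkeeping.
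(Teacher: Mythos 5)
Your overall plan---the support is a finite $S$-invariant set, a mass/preimage argument forces the orbits it contains to be periodic, Proposition \ref{prop:transitive} gives disjointness, and for (ii) the support decomposes into orbits each carrying the uniform measure---is the same skeleton as the paper's, and your treatment of (ii) is essentially sound. However, the step you yourself identify as the crux of (i) is exactly where your proposed arguments would fail, and the failure is not ``just bookkeeping.'' Both of your routes (minimality of an orbit, and the orbit-local ``eventual range'' $R$) rest on the inference: if $t^{-1}(y)\cap Sx=\varnothing$ for some $y\in Sx$, then $\mu(\{y\})=0$. That inference is only valid when $\supp(\mu)\subseteq Sx$, which is the hypothesis of the preceding lemma but not your situation: in general $\mu(\{y\})=\mu(t^{-1}(y)\cap\supp(\mu))$, and $t^{-1}(y)$ may meet $\supp(\mu)$ at points $w$ outside $Sx$ whose orbits contain $y$ (before periodicity is established, orbits of points of the support need not be disjoint, so preimages of points of $Sx$ are not confined to $Sx$). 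Relatedly, the set $R=\bigcap_{t\in S}\bigcap_{n\geq 1}t^{n}Sx$ can be empty---in Example \ref{exmp:tree} the two generators have disjoint eventual ranges on the two-point orbit---so ``$x\in R$, hence $Sx=R$ is periodic'' also does not follow without further input.

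The repair is to run the mass argument globally on $F=\supp(\mu)$ rather than orbit by orbit. Since $F$ is finite, every $y\in F$ satisfies $\mu(\{y\})>0$, and invariance gives $\mu(\{y\})=\mu\bigl(t^{-1}(\{y\})\cap F\bigr)$, so $t^{-1}(\{y\})\cap F\neq\varnothing$; thus each $t\in S$ is surjective, hence bijective, on $F$. Then for $x\in F$ the restriction $t|_{Sx}$ is injective and $tSx\subseteq Sx$ with $Sx$ finite, so $tSx=Sx$: every orbit in $F$ is periodic, and (i) follows from Proposition \ref{prop:transitive}. This global framing is, in substance, what the paper's (terser) argument does. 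For (ii), your forward direction invokes the fact that ergodic measures are extreme points; the paper instead checks directly that $\mu(t^{-1}(Sx_i)\triangle Sx_i)=0$ and applies the definition of ergodicity, which is more self-contained in the semigroup setting.
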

	
	\begin{proof}
		To prove (i), first assume that $\mu\in \Per(X,S)$. Then, by Remark \ref{rem:supp-inv} we have that $\supp(\mu)$ is $S$-invariant, which means $Sx\subseteq \supp(\mu)$ for all $x\in \supp(\mu)$. If it were the case that an element $x\in \supp(\mu)$ does not belong to the orbit of an element of $\supp(\mu)$, then $s^{-1}(x)\subseteq X-\supp(\mu)$ for every $s\in S$, yielding $\mu(\{x\})=0$, a contradiction.
        This means that $\supp(\mu)=\bigcup_{x\in \supp(\mu)}Sx$, which implies the statement, since periodic orbits are either equal or disjoint by Proposition \ref{prop:transitive}. The converse is direct.
		
		To prove (ii), first assume $\mu\in \Perg(X,S)$. Then, by part (i) there exist $S$-periodic elements $x_1,\dots,x_m\in X$ such that 
        $$\supp(\mu)=\bigsqcup_{i=1}^mSx_i.$$
        Given $t\in S$ and $i\neq j$, if $x\in t^{-1}(Sx_i)\cap Sx_j$, then $t\cdot x\in Sx_i\cap tSx_j=Sx_i\cap Sx_j=\varnothing$, so $t^{-1}(Sx_i)\cap Sx_j=\varnothing$. Also note that $Sx_i\subseteq t^{-1}(Sx_i)$ for every $t\in S$ and $i$, as $x_i$ is $S$-periodic. Therefore,
		$$\mu(t^{-1}(Sx_i)\triangle Sx_i)=\mu(t^{-1}(Sx_i)-Sx_i)\leq \mu(X-\supp(\mu))=0.$$
		 Hence, since $\mu$ is ergodic, $\mu(Sx_i) \in \{0,1\}$, so $\supp(\mu)$ consists of a single orbit. 
		
		Conversely, if $\supp(\mu)=Sx$ for an $S$-periodic point $x$ and $A\in \mathcal{B}(X)$ is such that $\mu(s^{-1}A\triangle A)=0$ for all $s\in S$, then $Sx\cap(A-s^{-1}A)=\varnothing$ for all $s\in S$, and we have two cases. First, if $Sx\cap A=\varnothing$, then $\mu(A)=0$. Second, if $t\cdot x\in Sx\cap A$ for some $t \in S$, necessarily we must have $t\cdot x\in s^{-1}A$ for every $s \in S$ as well, obtaining $st\cdot x\in A$ for every $s\in S$. Since the action of $S$ upon $Sx$ is transitive, this implies $Sx\subseteq A$, and so $\mu(A)=1$. Thus, $\mu$ is ergodic.
	\end{proof}
	
	As a consequence of last proposition, a periodic measure can always be written in the form
	$$\mu=\sum_{i=1}^{m}\frac{\mu(Sx_i)}{|Sx_i|}\sum_{x\in Sx_i}\delta_x,$$
where $\sum_{i=1}^m\mu(Sx_i)=1$, with $m=1$ if and only if the measure is moreover ergodic.

\subsection{Residual finiteness and periodic orbits}

We aim to relate algebraic properties of semigroups and both periodic and pre-periodic orbits. In order to do this, let us start by introducing two properties that are semigroup analogues to residual finiteness in groups.

    \begin{definition}
		A semigroup $S$ is \textbf{residually a finite group} (resp. \textbf{residually a finite semigroup}) if for every pair of distinct elements $t,t'\in S$ there is a finite group (resp. semigroup) $F$ and a semigroup morphism $\theta\colon S\rightarrow F$ such that $\theta(t)\neq \theta (t')$.
    \end{definition}

    \begin{remark}
    \label{rem:residually_finite_morphism}
    If $S$ is residually a finite group (resp. residually a finite semigroup) and $Q\subseteq S$, then for every $t,t'\in Q$ there exist a group (resp. semigroup) $F_{t,t'}$ and a semigroup morphism $\theta_{t,t'}\colon S\to F_{t,t'}$ such that $\theta_{t,t'}(t)\ne\theta_{t,t'}(t')$ if $t\neq t'$, and $F_{t,t'}$ is the trivial group if $t=t'$. Thus, we may define a semigroup morphism $\theta_Q\colon S\to\prod_{t, t'\in Q} F_{t,t'}$ such that $\theta(t)\ne\theta(t')$ for any $t,t'\in Q$, $t\ne t'$, by sending any $s \in S$ to the tuple $(\theta_{t,t'}(s))_{t, t'\in Q}$. 
    
    As a consequence of this, $S$ is residually a finite group (resp. residually a finite semigroup) if and only if for every finite subset $Q\subseteq S$, there is a finite group (resp. semigroup) $F$, namely $\prod_{t, t'\in Q} F_{t,t'}$, and a semigroup morphism $\theta\colon S\to F$, such that $\theta\rvert_{Q}$ is injective.
    \end{remark}
	
	\begin{remark}
		It is clear that being residually a finite group always implies being residually a finite semigroup. Furthermore, we have the converse in the case where $S$ is a group, since the image of a group via a semigroup morphism is always a group. In this situation, both notions coincide with the classic notion of residual finiteness for groups. However, for general semigroups these notions differ: every finite non-bicancellative semigroup is residually a finite semigroup but not residually a finite group. Indeed, if $S$ a non-bicancellative semigroup, it cannot be residually a finite group. Indeed, if $\theta\colon S\rightarrow F$ is a morphism to a group $F$, and $a,b,c\in S$ are such that $ab=ac$ and $b\neq c$, then $\theta(a)\theta(b)=\theta(a)\theta(c)$ so $\theta(b)=\theta(c)$, so $b$ and $c$ cannot be distinguished by such morphism.
	\end{remark}
	
    The following result is analogous to the characterization of residually finite groups provided in Proposition \ref{prop:residually_finite_groups} (iii).
	
	\begin{proposition}
    \label{prop:charac_res_finite_embedding}
		A semigroup $S$ is residually a finite group (resp.  residually a finite semigroup) if and only if it can be embedded into a Cartesian product of finite groups (resp. semigroups). In particular, a semigroup is residually a finite group if and only if it can be embedded into a residually finite group.
	\end{proposition}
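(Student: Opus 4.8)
The plan is to prove the two implications of the stated equivalence separately, running the ``finite group'' and ``finite semigroup'' cases in parallel since the argument is identical, and then to deduce the last sentence from Proposition \ref{prop:residually_finite_groups}.

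For the forward direction, I would assume $S$ is residually a finite group (resp. semigroup) and build a single separating morphism into a product, exactly in the spirit of Remark \ref{rem:residually_finite_morphism} but indexed over all of $S$ rather than over a finite subset. Concretely: for each ordered pair $(t,t')\in S\times S$ with $t\ne t'$, fix a finite group (resp. semigroup) $F_{t,t'}$ and a semigroup morphism $\theta_{t,t'}\colon S\to F_{t,t'}$ with $\theta_{t,t'}(t)\ne\theta_{t,t'}(t')$. Form the Cartesian product $P=\prod_{t\ne t'}F_{t,t'}$, a group (resp. semigroup) under coordinatewise operations, and define $\Theta\colon S\to P$ by $\Theta(s)=(\theta_{t,t'}(s))_{t\ne t'}$. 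Then $\Theta$ is a semigroup morphism because each coordinate map is, and it is injective since for distinct $s,s'\in S$ the value of $\Theta$ already differs in the $(s,s')$-coordinate. Hence $\Theta$ embeds $S$ into a Cartesian product of finite groups (resp. semigroups).

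For the reverse direction, I would start from an embedding $\iota\colon S\to P=\prod_{i\in I}F_i$ with each $F_i$ a finite group (resp. semigroup). Given distinct $t,t'\in S$, injectivity of $\iota$ yields an index $i\in I$ with $\pi_i(\iota(t))\ne\pi_i(\iota(t'))$, where $\pi_i\colon P\to F_i$ is the canonical projection; then $\pi_i\circ\iota\colon S\to F_i$ is a semigroup morphism to a finite group (resp. semigroup) separating $t$ from $t'$. Thus $S$ is residually a finite group (resp. semigroup). Finally, for the last assertion: if $S$ is residually a finite group, then by the above it embeds into a Cartesian product of finite groups, which is residually finite by Proposition \ref{prop:residually_finite_groups}; conversely, if $S$ embeds into a residually finite group $G$, then $G$ in turn embeds into a Cartesian product of finite groups by that same proposition, so $S$ does too, and the first part of the statement applies.

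I do not expect a genuine obstacle here; the proof is essentially a repackaging of the definition. The only points requiring any care are verifying that coordinatewise operations on the (possibly infinite) product really do make $\Theta$ a semigroup morphism and keeping the indexing set honest, and the one nontrivial external input is the equivalence (i) $\Leftrightarrow$ (iii) of Proposition \ref{prop:residually_finite_groups}, used for the closing ``in particular''.
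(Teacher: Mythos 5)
Your proof is correct and follows essentially the same route as the paper: the forward direction is exactly the separating product map $\theta_Q$ with $Q=S$ from Remark \ref{rem:residually_finite_morphism} (which is all the paper's proof explicitly records), and the reverse direction via coordinate projections and the closing appeal to Proposition \ref{prop:residually_finite_groups}(iii) are the routine details the paper leaves implicit. No issues.
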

	
	\begin{proof}
        Assume $S$ is residually a finite group (resp. residually a finite semigroup). The map $\theta_Q$ for $Q=S$, following the notation from Remark \ref{rem:residually_finite_morphism}, is an embedding into a product of finite groups (resp. semigroups).
	\end{proof}

	\begin{remark}
        Notice that Proposition \ref{prop:charac_res_finite_embedding} implies that any semigroup that is residually a finite group is necessarily bicancellative, and moreover embeddable into \emph{some} residually finite group. 
	\end{remark}

	A semigroup $S$ such that for every pair of distinct elements $s,s'\in S$, there is some $t\in S$ with $ts\neq ts'$, is called a \textbf{left reductive} semigroup. In particular, every monoid and every left cancellative semigroup is left reductive. On the other hand, a semigroup action $S \acts X$ is said to be \textbf{faithful} if for every $s,s' \in S$ with $s\neq s'$ there exists $x\in X$ such that $s\cdot x\neq s'\cdot x$. The following result relates these two notions.

    \begin{lemma}
        A semigroup $S$ is left reductive if and only if the shift action $S\curvearrowright \mathcal{A}^S$ is faithful for every (resp. any) alphabet $\mathcal{A}$ with $|\mathcal{A}|\geq 2$.
    \end{lemma}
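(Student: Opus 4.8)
The plan is to prove the cycle of implications: left reductive $\Rightarrow$ the shift action is faithful for \emph{every} alphabet $\mathcal{A}$ with $|\mathcal{A}|\ge 2$ $\Rightarrow$ the shift action is faithful for \emph{some} such alphabet $\Rightarrow$ left reductive. The middle implication is trivial, so there are really only two things to do, and both reduce to unwinding the definition of the shift action. The key observation to record first is that, for $x\in\mathcal{A}^S$ and $s,t\in S$, one has $(s\cdot x)(t)=x(ts)$; hence $s\cdot x\ne s'\cdot x$ holds if and only if there is some $t\in S$ with $x(ts)\ne x(ts')$.

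For the implication ``left reductive $\Rightarrow$ faithful for every $\mathcal{A}$'', fix an alphabet $\mathcal{A}$ with at least two distinct symbols, say $0,1\in\mathcal{A}$, and take $s\ne s'$ in $S$. By left reductivity there is $t\in S$ with $ts\ne ts'$. Since $ts$ and $ts'$ are distinct elements of $S$, I can define a configuration $x\in\mathcal{A}^S$ by $x(ts)=0$, $x(ts')=1$, and $x(u)=0$ for all other $u\in S$; this is well defined precisely because $ts\ne ts'$. Then $(s\cdot x)(t)=x(ts)=0\ne 1=x(ts')=(s'\cdot x)(t)$, so $s\cdot x\ne s'\cdot x$, witnessing faithfulness.

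For the implication ``faithful for some $\mathcal{A}$ $\Rightarrow$ left reductive'', suppose the shift action on $\mathcal{A}^S$ is faithful for some alphabet $\mathcal{A}$ with $|\mathcal{A}|\ge 2$, and let $s\ne s'$ in $S$. By faithfulness there is $x\in\mathcal{A}^S$ with $s\cdot x\ne s'\cdot x$, so by the observation above there is $t\in S$ with $x(ts)\ne x(ts')$. In particular $ts\ne ts'$, since equal elements of $S$ would force equal values of $x$. Hence $S$ is left reductive.

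I do not expect a genuine obstacle here: the statement is a direct translation between the algebraic separation property defining left reductivity and the dynamical separation property defining faithfulness, with the only point requiring mild care being the bookkeeping of the quantifiers ``for every'' versus ``for some'' alphabet, which is handled by the trivial middle implication of the cycle.
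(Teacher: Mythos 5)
Your proof is correct and follows essentially the same route as the paper: construct the indicator-type configuration separating $ts$ from $ts'$ for one direction, and read off $ts\ne ts'$ from $(s\cdot x)(t)\ne(s'\cdot x)(t)$ for the other. If anything, your explicit definition $x(ts)=0$, $x(ts')=1$ is written slightly more carefully than the paper's.
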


    \begin{proof}
        If $S$ is left reductive and $s,s'\in S$ are distinct elements, there is a $t\in S$ with $ts\neq ts'$, so the configuration $x\in \mathcal{A}^S$ given by $x(t)=1$ and $x(t')=0$ if $t'\neq t$ satisfies $s\cdot x\neq s'\cdot x$. Conversely, if $S\acts \mathcal{A}^S$ is faithful for some alphabet $\mathcal{A}$ with $|\mathcal{A}|\geq 2$, given $s\neq s'$ in $S$ there exist $x\in \mathcal{A}^S$ and $t\in S$ with $(s\cdot x)(t)\neq (s'\cdot x)(t)$, so $ts$ and $ts'$ cannot be equal. 
    \end{proof}

    Our first main result is the following.

	\firstproposition*
	
	\begin{proof}
		Let us prove (ii), as the proof of (i) is essentially the same.
        
        Suppose that $S$ is residually a finite group, $x\in\mathcal{A}^S$ is an arbitrary point, and let $Q\subseteq S$ be an arbitrary finite set. By Remark \ref{rem:residually_finite_morphism}, there exist a finite group $F$ and a semigroup morphism $\theta\colon S\to F$ with $\theta\rvert_{Q}$ injective.
		
		Note that every $w\in\mathcal{A}^F$ defines a point $\bar{w}\in\mathcal{A}^S$ by $\bar{w}(s)=w(\theta(s))$ for all $s\in S$. It holds true that $s\cdot \bar{w} = \overline{\theta(s)\cdot w}$ for any $s\in S$, since for every $t\in S$ we have
        $$(s\cdot \bar{w})(t)=\bar{w}(ts)=w(\theta(ts))=w(\theta(t)\theta(s))=(\theta(s)\cdot w)(\theta(t))=\big{(}\overline{\theta(s)\cdot w}\big{)}(t).$$
        Thus, for any $w\in \mathcal{A}^F$, the element $\bar{w}\in \mathcal{A}^S$ is $S$-periodic. Indeed, the above proven equality shows that $S\bar{w}\subseteq \{\bar{w'}:w'\in Fw\}$, which together with the fact that $F$ is finite prove that $|S\bar{w}|<\infty$. Moreover, given $t\in S$, since $\theta(S)$ is a group, for every $s \in S$, there is an $s'\in S$ such that $\theta(t)\theta(s')=\theta(s)$, yielding
        $$ts'\cdot \bar{w}=\overline{\theta(ts')\cdot w}=\overline{\theta(s)\cdot w}=s\cdot \bar{w}.$$
        Thus, $tS\bar{w}=S\bar{w}$ for every $t \in S$. Define a configuration $w_x\colon F\to\mathcal{A}$ by $w_x(\theta(s))=x(s)$ for all $s\in Q$, extending it arbitrarily to the rest of $F$ if needed. As the map $\theta\rvert_Q\colon Q\to F$ is injective, this is well-defined. The corresponding point $\overline{w_x}$ in $\mathcal{A}^S$ is $S$-periodic and satisfies the equality $\overline{w_x}\rvert_Q=x\rvert_Q$. This shows that, for any $x\in\mathcal{A}^S$ and any finite $Q\subseteq S$, the cylinder $[x;Q]$ must contain a periodic point, and thus the set of periodic points is dense. 
		
		Conversely, if $\mathcal{A}^S$ has dense set of periodic points and $s,s'\in S$ are distinct elements, by left reductiveness, there exists $t\in S$ such that $ts\neq ts'$. Choose two distinct elements from $\mathcal{A}$, which, for simplicity, will be denoted by $0$ and $1$, and consider any $x\in \mathcal{A}^S$ such that $x(ts)=0$ and $x(ts')=1$. By our hypothesis of denseness of periodic points, we may assume that $x$ is $S$-periodic and consider $F=\text{Sym}(Sx)$, which is a finite group. The definition of $S$-periodicity yields a morphism $\theta\colon S\rightarrow F$ by sending $a\in S$ to the function $a\rvert_{Sx}\colon Sx\rightarrow Sx$ given by $a\rvert_{Sx}(b\cdot x)=ab\cdot x$ for all $b\in S$.	By our choices of $x$ and $t$, we must have $(s\cdot x)(t)=x(ts)=0$ and $(s'\cdot x)(t)=x(ts')=1$. Hence, $s\cdot x \ne s'\cdot x$, and thus $s\rvert_{Sx}$ and $s'\rvert_{Sx}$ correspond to different permutations of $Sx$, that is, $\theta(s)\neq\theta(s')$.

        The proof of (i) is identical, but replacing the finite group $F$ by a finite semigroup satisfying that $\theta\rvert_Q$ is injective, and omitting the proof of complete $S$-invariance; and for the opposite direction, by replacing the symmetric group $\textup{Sym}(Sx)$ by the monoid of all transformations $Sx\to Sx$.
	\end{proof}
	
	Since a left cancellative semigroup is automatically left reductive, any semigroup that is residually a finite group is left reductive. In the non-left reductive case, however, the associated full $S$-shift may have dense periodic points, but this does not imply that the semigroup is residually a finite group.
	
	\begin{example}[Non-reductive case]\label{ex:non-reductive-dense-periodic-points}
		Let $L=\{a,b\}$ be the \textbf{left zero} semigroup, where $st=s$ for all $s,t\in L$. The semigroup $L$ is not left reductive, but satisfies that $s\cdot x=x$ for every $s\in L$ and $x\in \mathcal{A}^L$, so every element of $\mathcal{A}^L$ is periodic. Thus, it has a dense set of periodic points for the shift action $L\acts \mathcal{A}^L$, but $L$ is not residually a finite group since it is not bicancellative. Similarly, the semigroup $S=\Z\times L$ provides an example of an infinite semigroup with dense set of periodic points in $\mathcal{A}^S$, which is not residually a finite group.
	\end{example}

\subsection{The periodic approximation property}
In view of Theorem \ref{thm:A}, it is natural to ask under what conditions upon the acting semigroup $S$ the periodic measures on $\mathcal{A}^S$ are weak-* dense. Considering this, we now introduce a definition which will be fundamental throughout this work.

\begin{definition}
    A semigroup $S$ has the \textbf{periodic approximation property}, or simply the \textsc{pa} \textbf{property}, if $\Per(X,S)$ is weak-* dense in $\mathcal{M}_S(\mathcal{A}^S)$ for every finite alphabet $\mathcal{A}$. A semigroup $S$ has the \textbf{ergodic periodic approximation property}, or simply the \textsc{epa} \textbf{property}, if $\Perg(X,S)$ is weak-* dense in $\mathcal{M}_S(\mathcal{A}^S)$ for every finite alphabet $\mathcal{A}$. 
\end{definition}

This definition extends the one discussed for groups in the Introduction. In the case of groups, the \textsc{pa} property is equivalent to the property \textsc{md} described by Kechris \cite{kechris2020weak,kechris2012weakcontainment} in the context of weak containment of group actions. It is known that all amenable, residually finite groups have the \textsc{epa} property \cite{ren2018}. Similarly, Bowen showed that free groups on finitely many generators have the \textsc{pa} property \cite{bowen2003periodicity}. Examples of residually finite groups without the \textsc{pa} property include the special linear group $\operatorname{SL}_n(\Z)$ for $n\ge 3$ \cite{kechris2012weakcontainment} (in contrast to $\operatorname{SL}_2(\Z)$, which has the \textsc{pa} property) and $\F_2\times\F_2$ as a consequence of the negative answer to the Connes' Embedding Problem (see \cite[p. 27]{kechris2020weak} and \cite{ji2021}).

We stress that, although the \textsc{pa} property has been attentively studied for groups, this has not been the case with general semigroups. As a first preliminary result, we show that residual finiteness turns out to be a key necessary condition for having the \textsc{pa} property, as seen below.

\begin{proposition}\label{prop:PA_are_residually_finite}
    Let $S$ be a left reductive semigroup. If $S$ has the $\textsc{pa}$ property, then it is residually a finite group.
\end{proposition}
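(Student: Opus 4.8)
We want: if $S$ is left reductive and has the PA property, then $S$ is residually a finite group. By Theorem \ref{thm:A}(ii), it suffices to show that the set of periodic points $\Periodic(\mathcal{A}^S,S)$ is dense in $\mathcal{A}^S$ for every finite alphabet $\mathcal{A}$. So the plan is to deduce topological denseness of periodic points from weak-* denseness of periodic measures.

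The plan is as follows. Fix a finite alphabet $\mathcal{A}$, a configuration $x \in \mathcal{A}^S$, and a finite set $Q \subseteq S$; I want to find a periodic point in the cylinder $[x;Q]$. The first step is to manufacture an $S$-invariant measure on $\mathcal{A}^S$ that ``sees'' the pattern $x|_Q$: the natural candidate is a weak-* limit point $\mu$ of Cesàro-type averages of $\delta_x$ along the $S$-orbit, but because $S$ need not be amenable we cannot expect such averages to converge to an invariant measure directly. Instead, a cleaner route is to invoke the PA hypothesis through an auxiliary invariant measure. Concretely, I would argue: either $\mathcal{M}_S(\mathcal{A}^S)$ contains a measure $\mu$ with $[x;Q] \cap \supp(\mu) \neq \varnothing$ (in which case PA gives a periodic measure $\nu$ close to $\mu$, and for a suitable finite test family $\mathcal{F}$ of indicator-like continuous functions adapted to $Q$ we can force $\supp(\nu) \cap [x;Q] \neq \varnothing$, producing a periodic point in $[x;Q]$ since $\supp(\nu)$ is a finite union of periodic orbits by Proposition \ref{prop:structure_periodic_measures}); or else no such invariant measure exists. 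The heart of the matter is therefore to rule out the second alternative, or rather to produce the measure $\mu$ in the first place.

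To produce $\mu$, here is the key step. Consider the closed $S$-invariant set $Y = \overline{Sx} \subseteq \mathcal{A}^S$ — actually, one should pass to a minimal closed $S$-invariant subset, but since $S$ is only a semigroup one must be careful; instead I would use the following observation tied to the shift structure. The point $x$ itself, viewed through the shift, gives: $(s\cdot x)(t) = x(ts)$. The subtlety is that $x|_Q$ need not reappear in any orbit point, so we cannot simply take an orbit closure and a measure on it. The correct move is to use the hypothesis that the PA property holds \emph{for every finite alphabet}, and to enlarge the alphabet or pick $x$ cleverly. Actually, re-examining: the statement only requires denseness of periodic \emph{points}, and by Theorem \ref{thm:A} this is equivalent to residual finiteness, so it is enough to find, for each finite $Q$ and each pattern $p \in \mathcal{A}^Q$, \emph{some} configuration extending $p$ that is periodic. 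So I would fix $p\colon Q \to \mathcal{A}$, and consider any $S$-invariant measure supported on configurations compatible with $p$ on $Q$ — if $\mathcal{M}_S([p;Q]\text{-compatible configs})$ is nonempty. If $\mathcal{M}_S(\mathcal{A}^S)$ is nonempty at all (which it is, by compactness and the Markov–Kakutani-type argument, or simply: constant configurations are fixed points when $S$ is a monoid), we can combine this with a combinatorial gadget: since $S$ acts on $\mathcal{A}^S$ by $(s\cdot x)(t)=x(ts)$, the cylinder $[p;Q]$ has the property that its translates generate enough room. The clean finish: take $\mu \in \mathcal{M}_S(\mathcal{A}^S)$ arbitrary, use PA to approximate it by periodic $\nu_n \to \mu$; the supports $\supp(\nu_n)$ are finite unions of periodic orbits, hence consist of periodic points; and as $n \to \infty$ these finite supports must become weak-* equidistributed toward $\mu$, in particular $\bigcup_n \supp(\nu_n)$ is dense in $\supp(\mu)$. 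Varying $\mu$ over all of $\mathcal{M}_S(\mathcal{A}^S)$, the union $\bigcup_{\mu}\overline{\bigcup_n \supp(\nu_n)} \supseteq \bigcup_\mu \supp(\mu)$, and one shows $\overline{\bigcup_\mu \supp(\mu)} = \mathcal{A}^S$ using that $S$ is a monoid (so constants are fixed, giving fixed points $\delta_{\bar a}$ whose supports are single points $\bar a$ — but this only gives constants).

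The \textbf{main obstacle}, which I expect to be the crux, is exactly this last point: showing that periodic measures being dense forces periodic \emph{points} to be dense, i.e., that $\bigcup\{\supp(\mu) : \mu \in \Per(\mathcal{A}^S,S)\}$ is dense in $\mathcal{A}^S$. The resolution should go through left reductiveness: given $x$ and finite $Q$, one builds a \emph{specific} invariant measure whose support meets $[x;Q]$. I would do this by working in a finite ``quotient alphabet'': since we are trying to prove $S$ is residually a finite group, we should assume for contradiction it is not, find two elements $s \neq s'$ in $S$ not separated by any finite-group quotient, use left reductiveness to get $t$ with $ts \neq ts'$, and then construct an invariant measure on $\{0,1\}^S$ that is close to $\delta_x$ for $x(ts)=0, x(ts')=1$ but whose periodic approximants, by the structure of periodic measures (Proposition \ref{prop:structure_periodic_measures}) and the resulting finite-group morphisms $S \to \mathrm{Sym}(S y)$, would separate $ts$ from $ts'$ — mirroring exactly the converse direction in the proof of Theorem \ref{thm:A}(ii). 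So concretely: assume $S$ is not residually a finite group; pick $s \neq s'$ inseparable; pick $t$ with $ts \neq ts'$; let $\mu$ be any weak-* limit of $\frac{1}{|Q_n|}\sum_{q \in Q_n}\delta_{q \cdot x}$ failing — instead directly take $\mu \in \mathcal{M}_S(\mathcal{A}^S)$ with $ts, ts'$ "distinguished" on a positive-measure set, apply PA to get periodic $\nu$ nearby, extract from $\supp(\nu)$ a periodic point $y$ with $y(ts) \neq y(ts')$ by choosing the test functions in $V(\mu,\mathcal F,\varepsilon)$ to be the indicators of the clopen sets $\{z : z(ts)=i\}$, and then the morphism $\theta\colon S \to \mathrm{Sym}(Sy)$ from periodicity satisfies $\theta(s) \neq \theta(s')$ (since $(s \cdot y)(t) \neq (s' \cdot y)(t)$), contradicting inseparability. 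The delicate verification is that such a $\mu$ exists — for this one observes $\mathcal{A}^S$ with the shift admits, for any prescribed value at a single point $r \in S$, an invariant measure giving positive mass to each cylinder $\{z : z(r) = i\}$: e.g. any Bernoulli-type measure pushed forward appropriately, or more robustly, a fixed point of the Markov operator on the simplex of measures on $\mathcal{A}^S$ obtained via Markov–Kakutani applied to the (not necessarily amenable) action — which still works because we only need \emph{existence} of one invariant measure with full support, and the i.i.d. measure $\mathcal{A}^S = \bigotimes_S \lambda$ for $\lambda$ uniform on $\mathcal{A}$ is $S$-invariant with full support, settling it immediately. That full-support i.i.d. measure is the clean input, and everything else is the argument above.
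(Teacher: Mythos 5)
Your final paragraph is, once the many false starts are stripped away, essentially the paper's proof: both arguments rest on the fully supported i.i.d.\ Bernoulli measure being $S$-invariant, on the \textsc{pa} property forcing some periodic measure to assign positive mass to a prescribed clopen cylinder, and on Proposition \ref{prop:structure_periodic_measures} to extract a periodic point from its support. The only real difference is packaging: the paper runs the contrapositive through Theorem \ref{thm:A}(ii) as a black box (a cylinder containing no periodic points would receive positive $\mu$-mass but zero mass from every periodic measure), whereas you re-derive the separating morphism $S\to\mathrm{Sym}(Sy)$ inline from a periodic point $y$ with $y(ts)\neq y(ts')$ — the same content, just with the relevant direction of Theorem \ref{thm:A}(ii) unfolded.
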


\begin{proof}
    Assume $S$ is not residually a finite group. By Theorem \ref{thm:A}, there is a configuration $x\in\mathcal{A}^S$ and a finite subset $F\subseteq S$ such that $[x;F]\cap \Periodic(\mathcal{A}^S,S)=\varnothing$. Choose a measure $\mu\in \mathcal{M}_S(\mathcal{A}^S)$ such that $\mu([x;F])>0$ (take, for instance, the Bernoulli measure associated to a positive probability vector), and let $(\mu_n)_n$ be a sequence in $\Per(X,S)$ that weak-* converges to $\mu$. By Proposition \ref{prop:structure_periodic_measures}, we can assume that for every $n\geq 1$,
    $$\mu_n=\sum_{i=1}^{m_n}\frac{\mu_n(Sx^n_i)}{|Sx^n_i|}\sum_{y\in Sx^n_i}\delta_y,$$
    where each $x^n_i$ is $S$-periodic for every $1\leq i\leq m_n$. By weak-* convergence, we have that $\mu_n([x;F])\to\mu([x;F])$. However, for all $n\geq 1$ and $1\leq i\leq m_n$, the set $[x;F]\cap Sx^n_i$ is empty, meaning that $\mu_n([x;F])=0$, a contradiction with our choice of $\mu$.
\end{proof}

\begin{remark}
Based on Example~\ref{ex:non-reductive-dense-periodic-points}, it is not difficult to construct an example of a semigroup with the \textsc{pa} property that is not left reductive, and thus cannot be residually a finite group.
\end{remark}

\section{A sufficient condition for the \textsc{pa} property}
\label{sec3}

We want to understand the \textsc{pa} property in the more general landscape of semigroups, taking advantage of what is already known for groups. With this goal in mind, we restrict ourselves to the class of embeddable monoids.
The purpose of this section is to establish a direct connection between the \textsc{pa} property of an embeddable monoid $S$ and the \textsc{pa} property of the corresponding free $S$-group. In order to do this, the tool of choice will be the natural extension construction, which associates to an $S$-action a corresponding invertible action of the free $S$-group which extends the original action in a natural way.

\subsection{Topological natural extensions}

In \cite{bricenobustosdonoso1}, topological natural extensions were profusely discussed. Based on this work, we consider the following definition.

\begin{definition}
\label{def:natural_extension}

Let $S$ be an embeddable monoid and $S\overset{\alpha}{\acts}X$ a continuous $S$-action. Given a receiving $S$-group $\receivingSgroup = (G,\eta)$, the \textbf{topological natural $\G$-extension} is the tuple $(X_{\receivingSgroup},\sigma,\pi)$, where
$$X_{\receivingSgroup}=\left\{(x_h)_{h\in G}\in  X^G : s\cdot x_h=x_{\eta(s)h}\textup{ for all }s\in S\textup{ and }h\in G\right\}$$
is endowed with the subspace topology of the product topology, $\sigma$ denotes the restriction of the shift action $G\overset{\sigma}{\acts}X^G$ to $X_{\receivingSgroup}$, and $\pi\colon X_{\receivingSgroup}\rightarrow X$ is the projection $\pi_{1_G}\colon X^G\to X$ restricted to $X_{\receivingSgroup}$. If $\pi\colon X_{\receivingSgroup}\rightarrow X$ is surjective, $\alpha$ is said to be $\textbf{topologically } \G$\textbf{-extensible}, and if $X_{\receivingSgroup}\neq \varnothing$, $\alpha$ is said to be \textbf{topologically partially} $\G$\textbf{-extensible}.
\end{definition}

Since $X_{\receivingSgroup}$ is closed in $X^G$, and $X^G$ is compact---as we are assuming $X$ is a compact metric space---, we have that $X_{\receivingSgroup}$ compact as well. In addition, the map $\pi$ is continuous and $\eta$-equivariant in the sense that
$$
        s\cdot \pi(\overline{x}) = \pi(\eta(s) \cdot \overline{x}) \quad \text{ for all } \overline{x}\in X_{\receivingSgroup} \text{ and } s\in S.
        $$
In \cite{bricenobustosdonoso1} it was proven that the natural $\receivingSgroup$-extension of $S\acts X$ comes with a universal property: if $(\GextensionSet,\beta,\genextmap)$ is any tuple such that $\GextensionSet$ is a topological space, $\beta$ is a continuous action $G\acts \GextensionSet$, and $\genextmap\colon \GextensionSet\rightarrow X$ is surjective, continuous, and $\eta$-equivariant, then there is a unique equivariant continuous function $\varphi\colon \GextensionSet \rightarrow X_\G$ satisfying $\pi\circ \varphi=\genextmap$, so that the following diagram commutes.
$$
\begin{tikzcd}[ampersand replacement=\&]
\GextensionSet \arrow[r, "\varphi", dashed] \arrow[rd, "\genextmap"']  \&  X_\G \arrow[d, "\pi"] \\ 
           \&  X     
\end{tikzcd}
$$

A key consequence of the main result in \cite{bricenobustosdonoso1} is the following:
\begin{theorem}[\cite{bricenobustosdonoso1}]
    If $\receivingSgroup$ is the free $S$-group, then every surjective continuous $S$-action is topologically $\receivingSgroup$-extensible.
\end{theorem}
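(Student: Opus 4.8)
The plan is to show that the projection $\pi\colon X_{\mathbf{G}}\to X$ is surjective, where $\mathbf{G}=(\Gamma,\gamma)$ is the free $S$-group and each $\alpha_s$ is assumed surjective. Since $X$ is compact metric and $\Gamma$ is countable, $X^{\Gamma}$ is compact metrizable, and $X_{\mathbf{G}}$ is closed in it (it is cut out by the closed conditions $s\cdot x_h=x_{\gamma(s)h}$), so $\pi(X_{\mathbf{G}})$ is closed in $X$. Fixing $x_0\in X$, it therefore suffices to produce a single $(x_h)_{h\in\Gamma}\in X_{\mathbf{G}}$ with $x_{1_\Gamma}=x_0$; equivalently, a map $p\colon\Gamma\to X$ with $p(1_\Gamma)=x_0$ and $s\cdot p(h)=p(\gamma(s)h)$ for all $s\in S$, $h\in\Gamma$. (Equivalently, one may invoke the universal property of the natural extension: it is enough to build any $\Gamma$-space admitting a surjective continuous $\gamma$-equivariant map onto $X$, since such a map then factors through $\pi$.)

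I would obtain $p$ by a finite-intersection argument. The configurations realizing $x_0$ together with all the constraints form an intersection of closed subsets of the compact space $X^{\Gamma}$, so it is nonempty provided every finite subfamily is realizable. A finite subfamily involves a finite set $F\subseteq\Gamma$ (containing $1_\Gamma$ and all the elements $h_i,\gamma(s_i)h_i$ occurring) and finitely many equations $s_i\cdot p(h_i)=p(\gamma(s_i)h_i)$; encode these as a finite directed graph on vertex set $F$ with an edge $h_i\to\gamma(s_i)h_i$ labelled $s_i$. Choose a spanning forest, rooting so that $1_\Gamma$ is a root of its component, and propagate $x_0$ outward: along a forward tree edge set $p(\gamma(s)h):=\alpha_s(p(h))$; along a backward tree edge choose $p(h)$ to be any $\alpha_s$-preimage of $p(\gamma(s)h)$, which exists since $\alpha_s$ is surjective. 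This defines $p$ on $F$ and makes every tree edge satisfy its equation.

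The crux, and the step I expect to be the main obstacle, is the non-tree edges. A non-tree edge closes a cycle, and reading labels with signs along that cycle produces a word $w=\gamma(t_1)^{\varepsilon_1}\cdots\gamma(t_m)^{\varepsilon_m}$ that equals $1_\Gamma$ in $\Gamma$. Here is where freeness of $\mathbf{G}$ enters: such a relation is a consequence of relations already valid in $S$. Concretely, writing a presentation $S=\langle B\mid R\rangle^+$ gives $\Gamma=\langle B\mid R\rangle$, and $w$ is a product of conjugates of the relators $r\in R$, each an identity $u=v$ between positive words that holds in $S$, hence $\alpha_{[u]}=\alpha_{[v]}$ as self-maps of $X$; combined with injectivity of $\gamma$ (so $\gamma(a)=\gamma(b)$ forces $a=b$ and $\alpha_a=\alpha_b$), this is exactly what is needed to make the cycle conditions automatic. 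I would organize the argument as an induction on the number of non-tree edges: when incorporating a non-tree edge $h\to\gamma(s)h$, reassign the finitely many backward-edge values along the cycle it creates so as to force $\alpha_s(p(h))=p(\gamma(s)h)$, using surjectivity of the relevant $\alpha_t$'s to keep all already-satisfied equations valid; the relator structure guarantees that the conditions imposed by distinct non-tree edges do not clash. This van Kampen-type bookkeeping is the technical heart of the proof.

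Once every finite subfamily is realized, compactness yields a full $p\colon\Gamma\to X$, hence a point of $X_{\mathbf{G}}$ over $x_0$; as $x_0$ was arbitrary, $\pi$ is onto. An alternative, possibly cleaner, route sidesteps the explicit diagrams: build the fiber over $x_0$ by an iterated natural-extension construction over a generating set of $S$, passing from $X$ to the two-sided natural extension inverting one generator at a time, at each step checking that the previously inverted generators and the defining relations of $S$ survive (the single-generator step is just repeatedly choosing preimages, which works because the map is surjective). For a finitely generated piece this terminates and matches the finite subfamilies above, and the general statement then follows by the same compactness argument, or directly from the description of $X_{\mathbf{G}}$ for the free $S$-group obtained in \cite{bricenobustosdonoso1}. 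In every version the essential input is the interplay of surjectivity of the $\alpha_s$ (needed to reverse the generators) with the freeness of $\mathbf{G}$ (so that the only relations to respect are those already present in $S$, where the action is genuinely defined).
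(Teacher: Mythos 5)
This statement is not proved in the present paper at all: it is imported from \cite{bricenobustosdonoso1} as a consequence of that paper's main result, so there is no in-paper argument to compare yours against. Judged on its own, your reduction is correct as far as it goes: compactness of $X^{\Gamma}$ does turn surjectivity of $\pi$ into a finite constraint-satisfaction problem on finite labelled subgraphs of the Cayley graph of $\Gamma$ with respect to $\gamma(S)$, and the spanning-forest propagation (choosing $\alpha_s$-preimages along backward edges) correctly handles the acyclic part.

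The gap is exactly at the step you yourself flag as the "technical heart": the non-tree edges. The claims that one can "reassign the backward-edge values along the cycle \dots to keep all already-satisfied equations valid" and that "the relator structure guarantees that the conditions imposed by distinct non-tree edges do not clash" constitute the entire content of the theorem and are not justified. Two concrete problems. First, a cycle whose label $w$ equals $1_\Gamma$ is filled by a van Kampen diagram whose interior vertices generally lie far outside your finite set $F$; each $2$-cell imposes its own edge constraints, so "filling in" the diagram reproduces the original satisfiability problem on a larger graph rather than resolving it. Second, re-choosing a preimage to satisfy one cycle can break another cycle through the same backward edge, and your induction on non-tree edges carries no invariant ruling this out. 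The choices genuinely matter: already for $\mathrm{BS}(1,2)^+=\langle a,b\mid ab=b^2a\rangle^+$, propagating around the cycle witnessing $b=a^{-1}b^2a$ with an ill-chosen spanning tree forces $x_b$ to be one \emph{specific} element of the (possibly large) fiber $\alpha_a^{-1}(\alpha_a\alpha_b(x_0))$, and nothing in the sketch explains why a globally consistent system of such choices exists when cycles interlock. Note, moreover, that the theorem is false for receiving $S$-groups other than the free one, so freeness must enter quantitatively; your only use of it is the remark that relations in $\Gamma$ are consequences of relations in $S$, which is not by itself an argument. The two regimes the present paper does treat rely on different mechanisms, neither of which is your spanning-tree argument: in the left reversible case every finite $F\subseteq\Gamma$ has a common $\leq_S$-lower bound (Lemma \ref{directed_order}), so every constrained vertex is reached from it by a positive path and no preimage choices or cycle checks are needed; in the free case the Cayley graph is a tree and there are no cycles. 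A general embeddable monoid enjoys neither property, and bridging that is precisely the work your proposal leaves undone.
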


\subsection{Measure-theoretical extensions}
We will make use of topological natural extensions to define measure-theoretical natural extensions. Observe that, given a receiving $S$-group $\receivingSgroup=(G,\eta)$ and a topologically $\receivingSgroup$-extensible continuous action $S\acts X$, the projection map $\pi\colon X_{\receivingSgroup}\rightarrow X$ induces a push-forward map $\pi_{*}\colon \mathcal{M}(X_{\receivingSgroup})\rightarrow \mathcal{M}(X)$ given by $\pi_{*}\hat{\mu}(A)=\mu(\pi^{-1}(A)),$
for all $A\in \mathcal{B}(X)$. Moreover, since $\pi$ is $\eta$-equivariant, the image of a $G$-invariant measure on $X_{\receivingSgroup}$ via $\pi_*$ is an $S$-invariant measure on $X$, so the operator $\pi_*\colon \mathcal{M}_G(X_{\receivingSgroup})\rightarrow \mathcal{M}_S(X)$ is well-defined. From now on, $\pi_*$ will denote this restricted version of the push-forward. It is standard that $\pi_*$ is weak-* continuous (see, e.g., \cite[Appendix B]{einsiedler2011ergodic}).

\begin{definition}
    Let $S\acts X$ be a continuous action and $\mu\in\mathcal{M}_S(X)$. If there exists $\bar{\mu}\in\mathcal{M}_{G}(X_{\receivingSgroup})$ such that $\pi_*\bar{\mu}=\mu$, then $\mu$ will be called $\receivingSgroup$-\textbf{extensible} and $\bar{\mu}$ a $\receivingSgroup$\textbf{-extension} of $\mu$. The set $\im(\pi_*)$ will be denoted by $\operatorname{Ext}_\receivingSgroup(X,S)$.
\end{definition}

\begin{remark}
Observe that if $\receivingSgroup=(G,\eta)$ is a receiving $S$-group, $S\acts (X,\mu)$ is a p.m.p. action, and $\mu\in \mathcal{M}_S(X)$ is $\receivingSgroup$-extensible, there is a $G$-invariant measure $\bar{\mu}$ on $(X_{\receivingSgroup},\mathcal{B}(X_{\receivingSgroup}))$ such that $\pi\colon (X_{\receivingSgroup},\bar{\mu})\to (X,\mu)$ is measure-preserving. Thus, the shift action $G\acts (X_{\receivingSgroup},\bar{\mu})$ is a p.m.p. action, and the tuple $(X_{\receivingSgroup},\sigma,\bar{\mu},\pi)$ may be regarded as a measure-theoretical natural $\receivingSgroup$-extension of $S\acts (X,\mu)$.
\end{remark}

\begin{remark}\label{symbolic_measure_extensions}
    Let $\receivingSgroup=(G,\eta)$ be a receiving $S$-group. We may identify the topological natural $\receivingSgroup$-extension of $\mathcal{A}^S$ with $(\mathcal{A}^G,\pi)$, where $\pi\colon \mathcal{A}^G\rightarrow \mathcal{A}^S$ is given by $\overline{x}\mapsto \overline{x}\circ \eta$ (see \cite{bricenobustosdonoso1}). With this identification in mind, a measure $\mu\in \mathcal{M}_S(\mathcal{A}^S)$ is $\receivingSgroup$-extensible if and only if there is a measure $\bar{\mu}\in \mathcal{M}_G(\mathcal{A}^G)$ such that $\pi_{*}\bar{\mu}=\mu$. In the forthcoming, whenever we deal with natural extensions in the symbolic case, we will proceed with this identification, so $\pi$ will denote the map $\overline{x}\mapsto \overline{x}\circ \eta$. 
\end{remark}

\begin{proposition}
\label{prop:ext_weak_closed}
The subset $\Ext_\receivingSgroup(X,S)\subseteq \mathcal{M}_S(X)$ is weak-* closed and convex.
\end{proposition}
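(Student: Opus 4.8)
The plan is to read off the statement directly from the definition $\Ext_\receivingSgroup(X,S)=\im(\pi_*)$ together with the two properties already recorded in this section: that $\pi_*\colon \mathcal{M}_G(X_\receivingSgroup)\to\mathcal{M}_S(X)$ is weak-* continuous, and the general fact that, for a compact metric space, the space of probability measures is weak-* compact and the subset of invariant measures is weak-* closed (hence compact) and convex.

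First I would observe that $X_\receivingSgroup$ is a compact metric space, being a closed subset of the compact metric space $X^G$, so $\mathcal{M}(X_\receivingSgroup)$ is weak-* compact and metrizable, and $\mathcal{M}_G(X_\receivingSgroup)$ is a weak-* closed, convex subset of it; in particular $\mathcal{M}_G(X_\receivingSgroup)$ is weak-* compact. For closedness of $\Ext_\receivingSgroup(X,S)$, I would then invoke weak-* continuity of $\pi_*$: the continuous image of the compact set $\mathcal{M}_G(X_\receivingSgroup)$ is compact in $\mathcal{M}_S(X)$, and since the weak-* topology on $\mathcal{M}(X)$ is metrizable (in particular Hausdorff), this image is weak-* closed.

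For convexity, the key point is that $\pi_*$ is affine, which is immediate from its defining formula: for $\bar\mu,\bar\nu\in\mathcal{M}_G(X_\receivingSgroup)$, $t\in[0,1]$, and any $A\in\mathcal{B}(X)$,
$$\pi_*\!\left(t\bar\mu+(1-t)\bar\nu\right)(A)=\left(t\bar\mu+(1-t)\bar\nu\right)(\pi^{-1}(A))=t\,\pi_*\bar\mu(A)+(1-t)\,\pi_*\bar\nu(A),$$
so $\pi_*$ carries the convex set $\mathcal{M}_G(X_\receivingSgroup)$ onto a convex set. Combining the two parts gives the proposition. I do not expect any genuine obstacle here: the statement is a soft consequence of compactness of the space of invariant measures on a compact space, plus continuity and affineness of $\pi_*$, all of which are available; the only step meriting a moment's care is the affineness of $\pi_*$, which is the one-line computation above.
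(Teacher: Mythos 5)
Your proof is correct and follows essentially the same route as the paper: the paper also deduces closedness from weak-* compactness of $\mathcal{M}_G(X_\receivingSgroup)$ together with continuity of $\pi_*$ (unpacked there as a subsequence-extraction argument, which is just the sequential form of ``continuous image of a compact set is compact, hence closed in a metrizable space''), and convexity from the fact that $\pi_*$ is affine on the convex set $\mathcal{M}_G(X_\receivingSgroup)$. No gaps.
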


\begin{proof}
    Let $\mu\in \mathcal{M}_S(X)$, and let $(\mu_n)_n$ be a sequence in $\Ext_\receivingSgroup(X,S)$ converging to $\mu$. Consider, for every $n\in \N$, a measure $\bar{\mu}_n\in \mathcal{M}_G(X_\receivingSgroup)$ such that $\pi_*\bar{\mu}_n=\mu_n$. Since $X_\receivingSgroup$ is compact, $\mathcal{M}_G(X_\receivingSgroup)$ is weak-* compact. Thus, we can find a subsequence $(\bar{\mu}_{n_k})_{k}$ of $(\bar{\mu}_n)_n$ weak-* converging to some $\bar{\mu}\in \mathcal{M}_G(X_\receivingSgroup)$. By continuity of $\pi_*$, we get 
    $$\mu=\lim_{k\to \infty}\mu_{n_k}=\lim_{k\to\infty}\pi_*\bar{\mu}_{n_k}=\pi_*\left(\lim_{k\to \infty}\bar{\mu}_{n_k}\right)=\pi_*\bar{\mu}.$$
    Therefore, $\mu\in \Ext_\receivingSgroup(X,S)$. Finally, $\Ext_\receivingSgroup(X,S)$ is convex, as it is the image of the convex set $\mathcal{M}_G(X_\receivingSgroup)$ under the linear function $\pi_*$.
\end{proof}

\begin{proposition}\label{weakfactors}
    Let $\receivingSgroup=(G,\eta)$ be a receiving $S$-group. Let $S\overset{\alpha}{\acts}(X,\mu),S\overset{\beta}{\acts} (Y,\nu)$ be two p.m.p. actions such that $\alpha$ is a factor of $\beta$. Then, if $\nu$ is $\receivingSgroup$-extensible, so is $\mu$.
\end{proposition}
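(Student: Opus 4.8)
The plan is to use the functoriality of the topological natural $\receivingSgroup$-extension with respect to factor maps, and then simply push the extension of $\nu$ forward. Write $\receivingSgroup=(G,\eta)$, let $\varphi\colon Y'\to X'$ be a measure-preserving $S$-equivariant map realizing $\alpha$ as a factor of $\beta$ (with $X'\subseteq X$ and $Y'\subseteq Y$ full-measure $S$-invariant Borel sets), and denote by $\pi_X\colon X_{\receivingSgroup}\to X$ and $\pi_Y\colon Y_{\receivingSgroup}\to Y$ the two projections to the $1_G$-coordinate. Since $\nu$ is $\receivingSgroup$-extensible, fix $\bar\nu\in\mathcal{M}_G(Y_{\receivingSgroup})$ with $(\pi_Y)_*\bar\nu=\nu$; the goal is to produce some $\bar\mu\in\mathcal{M}_G(X_{\receivingSgroup})$ with $(\pi_X)_*\bar\mu=\mu$.

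First I would lift $\varphi$ to a map between natural extensions. On the set $Y'_{\receivingSgroup}:=\{(y_h)_{h\in G}\in Y_{\receivingSgroup}:y_h\in Y'\text{ for all }h\in G\}$, define $\Phi$ coordinatewise by $\Phi\big((y_h)_{h\in G}\big)=(\varphi(y_h))_{h\in G}$. One checks $\Phi$ takes values in $X_{\receivingSgroup}$: if $s\cdot y_h=y_{\eta(s)h}$ for all $s\in S$ and $h\in G$, then applying $\varphi$ and using its $S$-equivariance on the $S$-invariant set $Y'$ yields $s\cdot\varphi(y_h)=\varphi(y_{\eta(s)h})$, which is exactly the defining relation of $X_{\receivingSgroup}$. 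From the coordinatewise definition, $\Phi$ is Borel, $G$-equivariant for the shift actions, and satisfies $\pi_X\circ\Phi=\varphi\circ\pi_Y$ (both sides return the $1_G$-coordinate). The only nontrivial point here is that $\Phi$ is a priori defined only on $Y'_{\receivingSgroup}$, so I must show $\bar\nu(Y'_{\receivingSgroup})=1$; for this I would use that each coordinate projection $\pi_{Y,g}\colon Y_{\receivingSgroup}\to Y$ equals $\pi_Y$ composed with the action of a suitable element of $G$, so $G$-invariance of $\bar\nu$ gives $(\pi_{Y,g})_*\bar\nu=(\pi_Y)_*\bar\nu=\nu$, whence $\bar\nu(\{(y_h)_h:y_g\in Y'\})=\nu(Y')=1$ for every $g\in G$, and countability of $G$ forces $\bar\nu(Y'_{\receivingSgroup})=1$. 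Note also that $Y'_{\receivingSgroup}$ is $G$-invariant, since the condition $y_h\in Y'$ is imposed uniformly on all coordinates.

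Finally I would set $\bar\mu:=\Phi_*\bar\nu$. This is a Borel probability measure on $X_{\receivingSgroup}$; it is $G$-invariant because $\bar\nu$ is $G$-invariant, $Y'_{\receivingSgroup}$ is a $G$-invariant full-measure set, and $\Phi$ is $G$-equivariant on it; and
$$(\pi_X)_*\bar\mu=(\pi_X\circ\Phi)_*\bar\nu=(\varphi\circ\pi_Y)_*\bar\nu=\varphi_*\big((\pi_Y)_*\bar\nu\big)=\varphi_*\nu=\mu,$$
where the last step is exactly the hypothesis that $\varphi$ is measure-preserving (so $\varphi_*\nu=\mu$ as measures on $X$, using $\mu(X')=1$). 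Hence $\mu\in\Ext_{\receivingSgroup}(X,S)$, i.e.\ $\mu$ is $\receivingSgroup$-extensible. I expect the main (and essentially the only) obstacle to be the measure-theoretic bookkeeping in the middle step — verifying that the coordinatewise lift $\Phi$ is defined $\bar\nu$-almost everywhere and is a genuine factor map between the two natural extensions; once that is in place, the conclusion is formal functoriality of the natural extension construction.
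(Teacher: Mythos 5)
Your proposal is correct and follows essentially the same route as the paper: both proofs lift $\varphi$ coordinatewise to a $G$-equivariant map defined on the full-measure $G$-invariant set $\bigcap_{g\in G}g\pi_Y^{-1}(Y')$ (your $Y'_{\receivingSgroup}$), use $G$-invariance of $\bar\nu$ plus countability of $G$ to see this set has full measure, and then push $\bar\nu$ forward, checking $\pi_X\circ\Phi=\varphi\circ\pi_Y$. The only cosmetic difference is that the paper also records explicitly that this argument shows $X_{\receivingSgroup}\neq\varnothing$, which is implicit in your construction.
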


\begin{proof}
Let $X'\subseteq X$ and $Y' \subseteq Y$ be full measure $S$-invariant sets and let $\varphi\colon Y' \rightarrow X'$ be a measure preserving equivariant map. Let $(X_\receivingSgroup,\pi_{\alpha})$ and $(Y_\receivingSgroup,\pi_{\beta})$ be the topological $\receivingSgroup$-extensions of $\alpha$ and $\beta$, respectively, and $\bar{\nu}\in \mathcal{M}_G(X_\receivingSgroup)$ such that $(\pi_\beta)_*\bar{\nu}=\nu$. The first thing we need to check is that $X_\receivingSgroup$ is non-empty (and hence $\alpha$ is topologically partially $\receivingSgroup$-extensible). Indeed, since $\bar{\nu}$ is $G$-invariant, $\bar{\nu}(g\pi_{\beta}^{-1}(Y'))=\mu(Y')=1$ for all $g\in G$, so the fact that $G$ is countable implies
    $$\bar{\nu}\left(\bigcap_{g\in G}g\pi_{\beta}^{-1}(Y')\right)=1.$$ 
    Thus, there is an element $(y_h)_{g\in G}\in Y_\receivingSgroup$ with $y_h\in Y'$ for all $h\in G$. The element $(\varphi(y_h))_{h\in G}$ is an element of $X_\receivingSgroup$.

    We want to construct a measure $\bar{\mu}\in \mathcal{M}_G(X_\receivingSgroup)$ satisfying $(\pi_\alpha)_*\bar{\mu}=\mu$. Consider the function 
    \begin{align*}
        \bar{\varphi}\colon \bigcap_{g\in G} g\pi_{\beta}^{-1}(Y')&\longrightarrow \bigcap_{g\in G} g\pi_{\alpha}^{-1}(X')\\
        (y_h)_{h\in G}&\longmapsto (\varphi(y_h))_{h\in G},
    \end{align*}
    which is well-defined, as $\varphi(Y') \subseteq X'$ and, given $(y_h)_{h\in G}$ in $\bigcap_{g\in G} g\pi_{\beta}^{-1}(Y')$, $y_h \in Y'$ for every $h \in G$. As we already mentioned, $\bar{\varphi}$ is defined upon a full-measure subset of $Y_\receivingSgroup$. Also, $\bar{\varphi}$ is clearly $G$-equivariant. Define, for $\bar{A}\in \mathcal{B}(X_\receivingSgroup)$,
    $$\bar{\mu}\left(\bar{A}\right)=\bar{\nu}\left(\bar{\varphi}^{-1}\left(\bar{A}\cap \bigcap_{g\in G} g\pi_{\alpha}^{-1}(X')\right)\right).$$
    The set function $\bar{\mu}$ is a probability measure on $X_\receivingSgroup$. The $G$-invariance of $\bar{\mu}$ comes as a consequence of the $G$-equivariance of $\bar{\varphi}$ and the $G$-invariance of $\bigcap_{g\in G} g\pi_{\alpha}^{-1}(X')$. To see that $\pi_\alpha$ is a measure-preserving map, note that $\pi_\alpha\circ \bar{\varphi}=\varphi\circ \pi_{\beta}$, so for every $A\in \mathcal{B}(X)$,
    \begin{align*}
        \bar{\mu}(\pi_\alpha^{-1}(A))&= \bar{\nu}\left(\bar{\varphi}^{-1}\left(\pi_\alpha^{-1}(A)\cap \bigcap_{g\in G} g\pi_{\alpha}^{-1}(X')\right)\right)\\
        &=\bar{\nu}\left(\bar{\varphi}^{-1}(\pi_\alpha^{-1}(A))\cap \bigcap_{g\in G} g\bar{\varphi}^{-1}(\pi_{\alpha}^{-1}(X'))\right)\\
        &=\bar{\nu}\left(\pi_{\beta}^{-1}\left(\varphi^{-1}(A)\right)\cap \bigcap_{g\in G} g\pi_\beta^{-1}(Y')\right)\\
        &=\bar{\nu}\left(\pi_{\beta}^{-1}\left(\varphi^{-1}(A)\right)\right) = \nu(\varphi^{-1}(A))=\mu(A).
    \end{align*}   
\end{proof}

In view of Proposition \ref{weakfactors}, if $S\acts (X,\mu)$ and $S\acts (Y,\nu)$ are measure-theoretically conjugate, then $\mu$ is $\receivingSgroup$-extensible if and only if $\nu$ is.

\subsection{Periodicity and extensibility}
A first key observation is that periodic measures are always $\freeSgroup$-extensible when $\freeSgroup$ is the free $S$-group.

\begin{proposition}\label{per_are_ext}
    Assume that $S$ is an embeddable monoid, that $\freeSgroup=(\Gamma,\gamma)$ is a realization of the free $S$-group, and let $S\acts X$ be a continuous action. Then, for every $\mu \in \Perg(X,S)$, there exists $\bar{\mu} \in \Perg(X_{\freeSgroup},\Gamma)$ such that $\pi_*\bar{\mu} = \mu$. In particular, $\Per(X,S) \subseteq \Ext_{\freeSgroup}(X,S)$.
\end{proposition}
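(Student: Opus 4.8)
The plan is to reduce the statement to the ergodic case and, for each $\mu\in\Perg(X,S)$, to manufacture a periodic $\Gamma$-orbit inside $X_{\freeSgroup}$ that maps onto $\supp(\mu)$ under $\pi$. Two preliminary observations make the setup legitimate: since $S$ is embeddable, $\gamma$ is injective, so $\freeSgroup=(\Gamma,\gamma)$ is a genuine receiving $S$-group and $X_{\freeSgroup}$, $\pi$ and $\pi_*$ are defined; and by Proposition~\ref{prop:structure_periodic_measures}(ii), a measure $\mu\in\Perg(X,S)$ has $\supp(\mu)=Sx$ for a single $S$-periodic $x$, with $\mu=\frac{1}{|Sx|}\sum_{y\in Sx}\delta_y$. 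The inclusion $\Per(X,S)\subseteq\Ext_{\freeSgroup}(X,S)$ will then follow from the ergodic case by convexity, which I address at the end.

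First I would extract the finite-group data attached to a periodic $x$. Since $x$ is periodic, every $t\in S$ acts as a bijection of the finite set $Sx$, so the finite-index congruence $\mathcal{R}_x$ has the property that $F:=S/\mathcal{R}_x$ is a finite group, acting faithfully on $Sx$ by $[s]_{\mathcal{R}_x}\cdot y=s\cdot y$, and this action is transitive by Proposition~\ref{prop:transitive}. Writing $\theta\colon S\to F$ for the quotient morphism (a monoid morphism, as $\theta(1_S)$ acts as the identity of $Sx$), the pair $(F,\theta)$ is an $S$-group, so the universal property of the free $S$-group supplies a group morphism $\phi\colon\Gamma\to F$ with $\phi\circ\gamma=\theta$, necessarily surjective. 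This $\phi$ is the engine of the argument: it makes $\Gamma$ act on the finite set $Sx$ (via $g\cdot y:=\phi(g)\cdot y$), transitively because $\phi$ is onto.

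Next I would transport this action into the extension. For $y\in Sx$ the natural candidate is the point $\widehat y:=(\phi(h)\cdot y)_{h\in\Gamma}\in X^{\Gamma}$; choosing, for $h\in\Gamma$, some $s'\in S$ with $\theta(s')=\phi(h)$, one checks directly that $s\cdot\widehat y_h=(ss')\cdot y=(\theta(s)\phi(h))\cdot y=\phi(\gamma(s)h)\cdot y=\widehat y_{\gamma(s)h}$, so $\widehat y\in X_{\freeSgroup}$, and $\pi(\widehat y)=\widehat y_{1_\Gamma}=y$. Unwinding the shift action gives $(g\cdot\widehat y)_h=\widehat y_{hg}=\phi(h)\cdot(\phi(g)\cdot y)$, i.e. $g\cdot\widehat y=\widehat{\phi(g)\cdot y}$; hence $y\mapsto\widehat y$ is injective and $\Gamma$-equivariant, so $O:=\{\widehat y:y\in Sx\}$ is a single $\Gamma$-orbit of size $|Sx|$, completely $\Gamma$-invariant because each $\phi(g)$ permutes $Sx$ — that is, a periodic $\Gamma$-orbit. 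I would then set $\bar\mu:=\frac{1}{|Sx|}\sum_{y\in Sx}\delta_{\widehat y}$; it is $\Gamma$-invariant since $y\mapsto\phi(g)\cdot y$ is a bijection of $Sx$, its support is the single periodic orbit $O$, so $\bar\mu\in\Perg(X_{\freeSgroup},\Gamma)$ by Proposition~\ref{prop:structure_periodic_measures}(ii), and $\pi_*\bar\mu=\frac{1}{|Sx|}\sum_y\delta_{\pi(\widehat y)}=\mu$. For the general inclusion, Proposition~\ref{prop:structure_periodic_measures}(i) and the decomposition $\mu=\sum_i\frac{\mu(Sx_i)}{|Sx_i|}\sum_{x\in Sx_i}\delta_x$ write any $\mu\in\Per(X,S)$ as a finite convex combination of elements of $\Perg(X,S)$, each $\freeSgroup$-extensible by the above, and then convexity of $\Ext_{\freeSgroup}(X,S)$ (Proposition~\ref{prop:ext_weak_closed}) finishes the proof.

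I expect the only genuine obstacle to be bookkeeping rather than ideas: confirming that $(F,\theta)$ is an $S$-group in exactly the form needed for the universal property (in particular $\theta(1_S)=1_F$, which is where we use that $S$ is a monoid), and verifying that $\widehat y$ satisfies the defining relation of $X_{\freeSgroup}$ — both of which hinge entirely on the identity $\phi\circ\gamma=\theta$. A secondary point worth flagging is that $S\acts X$ is not assumed surjective, so $X_{\freeSgroup}$ could a priori be empty; but the explicit points $\widehat y$ exhibit nonemptiness precisely when $\Per(X,S)\neq\varnothing$, which is the only case where there is anything to prove.
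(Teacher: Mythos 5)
Your proof is correct, and its overall architecture is the same as the paper's: produce a group acting on the finite orbit $Sx$ compatibly with the $S$-action, invoke the universal property of the free $S$-group to obtain a morphism from $\Gamma$ to that group, realize each $y\in Sx$ as the point $(h\cdot y)_{h\in\Gamma}\in X_{\freeSgroup}$, and take the uniform measure on the resulting finite $\Gamma$-orbit; the convexity argument for the non-ergodic case is also identical. Where you genuinely diverge is in how the intermediate group is built. The paper constructs it from scratch as the quotient $S\ast\hat S/\!\sim$ of the free product of $S$ with a formal copy $\hat S$ of inverses, and must then verify that this quotient is a group before applying the universal property. You instead observe that the finite group $F=S/\mathcal{R}_x$, already shown earlier in the paper to be a subgroup of $\mathcal{T}(Sx)$ precisely when $x$ is periodic, serves directly as the target $S$-group $(F,\theta)$; this reuses an established result, shortens the verification considerably, and makes explicit that the $\Gamma$-action on $Sx$ factors through a finite quotient of $\Gamma$. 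The two groups are in fact isomorphic (both are the image of $S$ in $\mathrm{Sym}(Sx)$, which is already a group since it is a finite cancellative subsemigroup), so nothing is lost; your route is simply the more economical packaging of the same idea. Your closing caveats — that $(F,\theta)$ need only be an $S$-group, not a receiving one, for the universal property to apply, and that the explicit points $\widehat y$ witness nonemptiness of $X_{\freeSgroup}$ — are both correctly handled.
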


\begin{proof}
Let $\mu\in \Perg(X,S)$ be given by
    $$\mu=\frac{1}{|Sx|}\sum_{y\in Sx}\delta_{y} \quad \text{ for some } x\in \Periodic(X,S).
    $$

    Consider a disjoint copy $\hat{S}=\{\hat{s}:s\in S\}$ of $S$, with the operation $\hat{t}\hat{s}=\hat{st}$ for $\hat{s},\hat{t}\in \hat{S}$. We define an action of $\hat{S}$ on $Sx$ as follows: for every $\hat{s} \in \hat{S}$ and $y\in Sx$, we let $\hat{s} \cdot y$ to be the only element of $s^{-1}(\{y\}) \cap Sx$. Notice that this is indeed an action, because $\hat{st}\cdot y$ is the only element of 
    $$(st)^{-1}(\{y\})\cap Sx=t^{-1}(s^{-1}(\{y\}))\cap Sx=t^{-1}(s^{-1}(\{y\})\cap Sx)\cap Sx,$$
    and the only element of the latter is exactly $\hat{t}\cdot (\hat{s}\cdot y)$.
    
    Since both $S$ and $\hat{S}$ act upon $Sx$, we immediately get an action $S\ast \hat{S}\acts Sx$ by concatenation, where $S\ast \hat{S}$ denotes the free product semigroup, where the empty word $\varepsilon$ acts as the identity permutation.

We want to see the action of $S\ast \hat{S}$ descends to an action of $\Gamma$. To do this, we define, for every $\hat{s}\in \hat{S}$, the element $\hat{\gamma}(\hat{s})=\gamma(s)^{-1}\in \Gamma$. Note that this defines a morphism $\hat{\gamma}\colon \hat{S}\to \Gamma$, since $\hat{\gamma}(\hat{t}\hat{s})=\hat{\gamma}(\hat{st})=\gamma(st)^{-1}=\gamma(t)^{-1}\gamma(s)^{-1}=\hat{\gamma}(\hat{t})\hat{\gamma}(\hat{s})$ for all $\hat{s},\hat{t}\in \hat{S}$.
This induces a morphism $\gamma_*\colon S\ast \hat{S}\to \Gamma$ by 
$$\gamma_*(s_1\hat{t}_1\cdots s_n\hat{t}_n)=\gamma(s_1)\hat{\gamma}(\hat{t}_1) \cdots \gamma(s_n)\hat{\gamma}(\hat{t}_n).$$

Given $w_1,w_2 \in S \ast \hat{S}$, declare $w_1 \sim w_2$ if and only if $w_1$ and $w_2$ induce the same element in $\mathrm{Sym}(Sx)$, i.e., if $w_1 \cdot y = w_2 \cdot y$ for every $y \in Sx$. We want to check that, if $\gamma_*(w_1)=\gamma_*(w_2)$, then $w_1 \sim w_2$.

First, observe that if $w_1 \sim w_2$, then for any $u,v \in S \ast \hat{S}$,
\[
u w_1 v \cdot y = u \cdot (w_1 \cdot (v \cdot y)) = u \cdot (w_2 \cdot (v \cdot y)) = u w_2 v \cdot y \quad \text{ for every } y \in Sx,
\]
thus making $\sim$ a congruence on $S \ast \hat{S}$. Next, consider the quotient semigroup $S \ast \hat{S}/\sim$. Take an arbitrary element $w = s_1\hat{t}_1\cdots s_n\hat{t}_n$ in $S \ast \hat{S}$ and define $\hat{w} = t_n\hat{s}_n\cdots t_1\hat{s}_1$. Since $s\hat{s}\sim \varepsilon \sim \hat{s}s$ for every $s \in S$, inductively we have $w\hat{w} \sim \varepsilon \sim \hat{w}w$. This means 
$$
[w]_\sim[\hat{w}]_\sim=[w\hat{w}]_\sim=[\varepsilon]_\sim = [\hat{w}w]_\sim =[\hat{w}]_\sim[w]_\sim.
$$
Hence, $G:= S \ast \hat{S}/\sim$ is actually a group, where the inverse of $[w]_{\sim}$ is just $[\hat{w}]_{\sim}$. 
  
  Let $q\colon S\ast \hat{S}\to G$ be the quotient map associated to $\sim$. The map $q\rvert_{S} \colon S\rightarrow G$ is a semigroup morphism, as equal elements of $S$ define equal elements in $\textup{Sym}
  (Sx)$. Analogously, $q\rvert_{\hat{S}}\colon \hat{S}\rightarrow G$ is a semigroup morphism, too. Since $[s_1\hat{t}_1\cdots s_n\hat{t}_n]_{\sim}=[s_1]_{\sim}[t_1]_{\sim}^{-1}\cdots [s_n]_{\sim}[t_n]_{\sim}^{-1}$ for every $s_1\hat{t}_1\cdots s_n\hat{t}_n\in S\ast \hat{S}$, we find that $(G,q\rvert_{S})$ is an $S$-group (note that, in general, the morphism $q\rvert_{S}$ may not be injective). By the universal property of the free $S$-group, we get a group morphism $\theta\colon \Gamma\rightarrow G$ such that $\theta\circ \gamma=q\rvert_{S}$, and it is verified that $\theta\circ \hat{\gamma}=q\rvert_{\hat{S}}$ as well. Therefore, $\theta\circ \gamma_*=q$, which implies $w_1\sim w_2$ whenever $\gamma_*(w_1)=\gamma_*(w_2)$, as desired.  
  
  We can now define an action $\Gamma\acts Sx$ by setting $\gamma_*(w)\cdot y=w\cdot y$ for every $w\in S\ast \hat{S}$ and $y\in Sx$, as a consequence of the fact that $\gamma_*(S\ast \hat{S})=\Gamma$. For a given $y\in Sx$, let $\bar{y}=(y_h)_{h\in G}\in X_{\freeSgroup}$ be given by $y_h=h\cdot y$ for every $h\in \Gamma$. Note that if $g\in \Gamma$, then
    $$g\cdot \bar{x}=g\cdot (h\cdot x)_{h\in G}=(hg\cdot x)_{h\in G}=(h\cdot (g\cdot x))_{h\in G}=\overline{g\cdot x}.$$
    This shows $\Gamma\bar{x}\subseteq\{\bar{y}:y\in Sx\}$. Also, if $y\in Sx$, then $y=s\cdot x$ for some $s\in S$, which implies $\bar{y}=\overline{s\cdot x}=\overline{\gamma(s)\cdot x}=\gamma(s)\cdot \bar{x}$. Notice that in this last equation there are three different actions involved. Thus, $\{\bar{y}:y\in Sx\}\subseteq \Gamma\bar{x}$. Since $\bar{y}\neq \bar{y}'$ whenever $y,y'\in Sx$ differ, we have that $|\Gamma\bar{x}|=|\{\bar{y}:y\in Sx\}|=|Sx|$. Thus, $\bar{x}$ is $\Gamma$-periodic, and the measure
    $$\bar{\mu}=\frac{1}{|\Gamma\bar{x}|}\sum_{\bar{y}\in \Gamma\,\bar{x}}\delta_{\bar{y}}=\frac{1}{|Sx|}\sum_{\bar{y}\in \Gamma\,\bar{x}}\delta_{\bar{y}}$$
    is $\Gamma$-periodic and ergodic, as it is supported on a single orbit. Finally, note that $$\bar{\mu}(\pi^{-1}A)=\frac{1}{|Sx|}\sum_{\bar{y}\in \Gamma\,\bar{x}}\delta_{\bar{y}}(\pi^{-1}A)=\frac{1}{|Sx|}\sum_{\bar{y}\in \Gamma\,\bar{x}}\delta_{\pi(\bar{y})}(A)=\frac{1}{|Sx|}\sum_{y\in Sx}\delta_{y}(A)=\mu(A)$$
    for every $A\in \mathcal{B}(X)$, so $\pi_{*}\overline{\mu}=\mu$. Hence $\mu \in \textup{Ext}_{\freeSgroup}(X,S)$.

    Finally, as every $\mu \in \Per(X,S)$ is a convex combination of elements in $\Perg(X,S)$, $\pi_*$ is linear, and, by Proposition \ref{prop:ext_weak_closed}, $\Ext_{\freeSgroup}(X,S)$ is a convex subset of $\mathcal{M}_S(X)$, we conclude.
\end{proof}

\begin{example}
    Note that if we remove the assumption that $\freeSgroup$ is the free $S$-group then Proposition \ref{per_are_ext} does not necessarily hold, as the set $X_\receivingSgroup$ might be empty for an arbitrary receiving $S$-group $\receivingSgroup$.
\end{example}

It is important to know the behaviour of images of $G$-orbits via $S$-equivariant maps.
	
	\begin{lemma}\label{image_of_periodic}
		Let $\receivingSgroup=(G,\eta)$ be a receiving $S$-group, and let $G\acts \hat{X}$ and $S\acts X$ be two actions. If $\phi:\hat{X}\rightarrow X$ is an $\eta$-equivariant map and $\hat{x}\in\hat{X}$ is $G$-periodic (i.e., $|G\hat{x}|<\infty$), then $\phi(\hat{x})$ is $S$-periodic and $\phi(G\hat{x})=S\phi(\hat{x})$.
	\end{lemma}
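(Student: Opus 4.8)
The plan is to reduce everything to one observation: since $\hat x$ is $G$-periodic, the orbit $O := G\hat x$ is a finite $G$-invariant set, so the $G$-action on $O$ factors through a \emph{finite} group, and inside a finite group the subsemigroup generated by $\rho(\eta(S))$ already equals the subgroup it generates. Concretely, I would let $\rho\colon G\to\mathrm{Sym}(O)$ be the permutation representation of the action $G\acts O$ and set $H:=\rho(G)$, a finite group. Because $\langle\eta(S)\rangle=G$, we have $H=\langle\rho(\eta(S))\rangle$ as a group; and since in a finite group every element $k$ has finite order $m$, so that $k^{-1}=k^{m-1}$, the subsemigroup generated by any subset of a finite group coincides with the subgroup it generates. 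Hence $H=\langle\rho(\eta(S))\rangle^{+}$, so for every $g\in G$ there exist $s_1,\dots,s_n\in S$ with
$$\rho(g)=\rho(\eta(s_1))\cdots\rho(\eta(s_n))=\rho(\eta(s_1\cdots s_n)).$$
Writing $s=s_1\cdots s_n\in S$, the elements $g$ and $\eta(s)$ induce the \emph{same} permutation of $O$; evaluating at $\hat x\in O$ gives $g\cdot\hat x=\eta(s)\cdot\hat x$.

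With this in hand, I would prove $\phi(G\hat x)=S\phi(\hat x)$. For $S\phi(\hat x)\subseteq\phi(G\hat x)$: for each $s\in S$, the $\eta$-equivariance of $\phi$ gives $s\cdot\phi(\hat x)=\phi(\eta(s)\cdot\hat x)\in\phi(G\hat x)$. For the reverse inclusion: given $g\in G$, choose $s\in S$ as above with $g\cdot\hat x=\eta(s)\cdot\hat x$; then $\phi(g\cdot\hat x)=\phi(\eta(s)\cdot\hat x)=s\cdot\phi(\hat x)\in S\phi(\hat x)$. In particular $|S\phi(\hat x)|=|\phi(G\hat x)|\le|G\hat x|<\infty$, so the orbit of $\phi(\hat x)$ is finite.

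Finally, I would check complete $S$-invariance of $S\phi(\hat x)$, i.e.\ $t\,S\phi(\hat x)=S\phi(\hat x)$ for all $t\in S$; the inclusion $\subseteq$ is automatic. For $\supseteq$, fix $t\in S$ and note that $\hat z\mapsto\eta(t)\cdot\hat z$ is a bijection of the finite $G$-invariant set $O=G\hat x$ (it maps $O$ into $O$ and is injective, being the restriction of the invertible map $\eta(t)$ on $\hat X$). Hence every element of $S\phi(\hat x)=\phi(O)$ is of the form $\phi(\eta(t)\cdot\hat z)=t\cdot\phi(\hat z)$ with $\hat z\in O$, and since $\phi(\hat z)\in\phi(O)=S\phi(\hat x)$ we get $t\cdot\phi(\hat z)\in t\,S\phi(\hat x)$; thus $S\phi(\hat x)\subseteq t\,S\phi(\hat x)$. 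Together with finiteness this shows $\phi(\hat x)$ is $S$-periodic, and we have already identified $\phi(G\hat x)=S\phi(\hat x)$.

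The only step that is not pure bookkeeping is the reverse inclusion $\phi(G\hat x)\subseteq S\phi(\hat x)$: the map $\phi$ is equivariant only with respect to the forward action of $S$ (there is no identity of the form $\phi(\eta(s)^{-1}\cdot\hat y)=\cdots$ available), so one genuinely needs the finite-group argument to replace an arbitrary $g\in G$ by some $\eta(s)$, $s\in S$, agreeing with $g$ on the finite orbit $O$. I expect this to be the main, and rather mild, obstacle; once it is in place, the remaining inclusions follow directly from $\eta$-equivariance together with the invertibility of the $G$-action.
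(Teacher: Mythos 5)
Your proof is correct and takes essentially the same route as the paper's: both hinge on the observation that, on the finite orbit $G\hat{x}$, every $g\in G$ acts as some $\eta(s)$ with $s\in S$ (because inverses become positive powers in a finite permutation group), and both obtain complete $S$-invariance from the fact that $\eta(t)$ restricts to a bijection of the finite orbit. Your packaging via the permutation representation $\rho\colon G\to\mathrm{Sym}(G\hat{x})$ is just a cleaner global form of the paper's element-by-element finite-order argument.
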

	
	\begin{proof}
		If $s\in S$, it is clear that $\phi(\eta(s)\cdot \hat{x})=s\cdot \phi(\hat{x})\in S\phi(\hat{x})$. Also, $\eta(s)$ has finite order as an element of $\textup{Sym}(G\hat{x})$, so there exists $n>1$ such that $\eta(s^n)$ acts as the identity of $G\hat{x}$. Thus, 
        $$\phi(\eta(s)^{-1}\cdot \hat{x})=\phi(\eta(s^{n-1})\cdot\hat{x})=s^{n-1}\cdot\phi(\hat{x})\in S\phi(\hat{x}).$$
        Since $G=\langle\eta(S)\rangle$, we conclude that $\phi(G\hat{x})\subseteq S\phi(\hat{x})$, which directly implies that $\phi(G\hat{x})= S\phi(\hat{x})$. 
        
        It is now clear that $\phi(\hat{x})$ is pre-periodic. To see that $S\phi(\hat{x})$ is completely $S$-invariant, let $s,t\in S$. Since the set $\{\eta(t)^{n}\eta(s) \cdot \hat{x}:n\geq 1\}$ is finite, there are $k>j$ such that $\eta(t)^k\eta(s) \cdot \hat{x}=\eta(t)^j\eta(s) \cdot \hat{x}$. The action on $\hat{X}$ is given by a group, so $\eta(s) \cdot \hat{x}=\eta(t)^{k-j}\eta(s) \cdot \hat{x}$ and $t^{k-j}\in S$. Then, due to the $\eta$-equivariance of $\phi$,
		$$s \cdot \phi(\hat{x})=t \cdot (t^{k-j-1}s \cdot \phi(\hat{x}))\in tS\phi(\hat{x}),$$
		so $t\colon S\phi(\hat{x})\rightarrow S\phi(\hat{x})$ is surjective, and hence bijective by finiteness of $S\phi(\hat{x})$.

	\end{proof} 

\begin{proposition}\label{supp_lemma}
Let $S \acts X$ be a continuous action, and let $\receivingSgroup$ be a receiving $S$-group. If $\bar{\mu}\in \mathcal{M}_G(X_{\receivingSgroup})$, then $\supp(\pi_*\bar{\mu})= \pi(\supp(\bar{\mu}))$. In particular, we have the following:
    \begin{enumerate}
        \item[(i)] $\pi_{*}(\Perg(X_{\receivingSgroup},G))\subseteq \Perg(X,S)$ and $\pi_{*}(\Per(X_{\receivingSgroup},G))\subseteq \Per(X,S)$.
        \item[(ii)] If $\overline{\Per(X_{\receivingSgroup},G)}=\mathcal{M}_G(X_{\receivingSgroup})$, then $\Ext_{\receivingSgroup}(X,S)\subseteq \overline{\Per(X,S)}$.
        \item[(iii)] If $\overline{\Perg(X_{\receivingSgroup},G)}=\mathcal{M}_G(X_{\receivingSgroup})$, then $\Ext_{\receivingSgroup}(X,S)\subseteq \overline{\Perg(X,S)}$.
    \end{enumerate}
\end{proposition}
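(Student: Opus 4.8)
The plan is to first establish the central identity $\supp(\pi_*\bar\mu) = \pi(\supp(\bar\mu))$, and then derive the three consequences from it together with the structural results already proven. For the main identity, I would argue both inclusions. For $\pi(\supp(\bar\mu)) \subseteq \supp(\pi_*\bar\mu)$: take $\bar x \in \supp(\bar\mu)$ and let $U$ be an open neighborhood of $\pi(\bar x)$; then $\pi^{-1}(U)$ is an open neighborhood of $\bar x$, so $\pi_*\bar\mu(U) = \bar\mu(\pi^{-1}(U)) > 0$, hence $\pi(\bar x) \in \supp(\pi_*\bar\mu)$, and since $\supp(\pi_*\bar\mu)$ is closed we get the inclusion of the closure $\pi(\supp(\bar\mu))$ once we note $\pi$ is continuous and $X_\receivingSgroup$ compact, so $\pi(\supp(\bar\mu))$ is already closed. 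For the reverse inclusion $\supp(\pi_*\bar\mu) \subseteq \pi(\supp(\bar\mu))$: since $\bar\mu(X_\receivingSgroup \setminus \supp(\bar\mu)) = 0$ and $\supp(\bar\mu)$ is closed hence compact, $\pi(\supp(\bar\mu))$ is a compact (thus closed) set; if $x \notin \pi(\supp(\bar\mu))$, pick an open $U \ni x$ with $U \cap \pi(\supp(\bar\mu)) = \varnothing$, so $\pi^{-1}(U) \cap \supp(\bar\mu) = \varnothing$ and therefore $\pi_*\bar\mu(U) = \bar\mu(\pi^{-1}(U)) = 0$, giving $x \notin \supp(\pi_*\bar\mu)$.

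For part (i): if $\bar\mu \in \Perg(X_\receivingSgroup, G)$, then by Proposition \ref{prop:structure_periodic_measures}(ii) its support is a single $G$-periodic orbit $G\bar x$. By the main identity, $\supp(\pi_*\bar\mu) = \pi(G\bar x)$, and by Lemma \ref{image_of_periodic} (applied with $\phi = \pi$, which is $\eta$-equivariant) this equals $S\pi(\bar x)$ with $\pi(\bar x)$ an $S$-periodic point. Hence $\supp(\pi_*\bar\mu)$ is a single periodic $S$-orbit, so $\pi_*\bar\mu \in \Perg(X,S)$ by Proposition \ref{prop:structure_periodic_measures}(ii) again. For the non-ergodic case, $\bar\mu \in \Per(X_\receivingSgroup, G)$ has finite support, so $\supp(\pi_*\bar\mu) = \pi(\supp(\bar\mu))$ is finite as well; combined with $S$-invariance of the support (Remark \ref{rem:supp-inv}), Proposition \ref{prop:structure_periodic_measures}(i) gives $\pi_*\bar\mu \in \Per(X,S)$. (Alternatively, decompose $\bar\mu$ into ergodic periodic components and use linearity of $\pi_*$ together with part (i) in the ergodic case.)

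For parts (ii) and (iii): let $\mu \in \Ext_\receivingSgroup(X,S)$, so $\mu = \pi_*\bar\mu$ for some $\bar\mu \in \mathcal{M}_G(X_\receivingSgroup)$. Under the hypothesis $\overline{\Per(X_\receivingSgroup,G)} = \mathcal{M}_G(X_\receivingSgroup)$ (resp. with $\Perg$), choose a sequence $\bar\mu_n \in \Per(X_\receivingSgroup,G)$ (resp. $\Perg$) converging weak-* to $\bar\mu$. By continuity of $\pi_*$, $\pi_*\bar\mu_n \to \pi_*\bar\mu = \mu$, and by part (i) each $\pi_*\bar\mu_n$ lies in $\Per(X,S)$ (resp. $\Perg(X,S)$). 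Hence $\mu \in \overline{\Per(X,S)}$ (resp. $\overline{\Perg(X,S)}$). The only subtlety to watch is the compactness argument ensuring $\pi(\supp(\bar\mu))$ is closed, which is where I expect the main (minor) obstacle to lie; everything else is a routine combination of the support identity with the previously established structure theory. Note that since $\mathcal{M}_G(X_\receivingSgroup)$ is metrizable, working with sequences rather than nets is harmless.
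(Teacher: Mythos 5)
Your proof is correct and follows essentially the same route as the paper: the support identity, then Lemma \ref{image_of_periodic} together with Proposition \ref{prop:structure_periodic_measures} for (i), and weak-* continuity of $\pi_*$ for (ii) and (iii). The only (harmless) difference is in the inclusion $\supp(\pi_*\bar\mu)\subseteq\pi(\supp(\bar\mu))$, where you use compactness of $\pi(\supp(\bar\mu))$ and argue by contraposition, whereas the paper extracts a convergent subsequence of preimages; both are valid, and yours is arguably slightly cleaner.
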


\begin{proof}
First, we prove that $\supp(\pi_*\bar{\mu})= \pi(\supp(\bar{\mu}))$. Let $x\in \supp(\pi_*\bar{\mu})$ and fix a decreasing sequence $(U_n)_n$ of open neighborhoods of $x$ with $\bigcap_{n}U_n=\{x\}$. Since $x\in \supp(\pi_*\bar{\mu})$, $\pi_*\bar{\mu}(U_n)>0$ for every $n$.
In particular, for every $n$, $\pi^{-1}(U_n)\cap \supp(\bar{\mu})$ is non-empty, so there exists $\bar{x}_n\in \supp(\bar{\mu})$ with $\pi(\bar{x}_n)\in U_n$. By compactness of $X_\receivingSgroup$, we can take a subsequence $(\bar{x}_{n_k})_k$ converging to $\bar{x}\in \supp(\bar{\mu})$ as $k\to \infty$. By continuity of $\pi$, we get $\pi(\bar{x}_{n_k})\to \pi(\bar{x})$, but since $\pi(\bar{x}_{n_k})\in U_{n_k}$, we also have that $\pi(\bar{x}_{n_k})\to x$. Thus, we conclude that $\pi(\bar{x})=x$, so $x\in \pi(\supp(\bar{\mu}))$. For the opposite inclusion, let $\bar{x}\in \supp(\bar{\mu})$. If $U$ is an open neighborhood of $\pi(\bar{x})$, then $\bar{x}\in\pi^{-1}U$ and $\pi_*\bar{\mu}(U)=\bar{\mu}(\pi^{-1}U)>0$, so $\pi(x)\in \supp(\pi_*\bar{\mu})$, following the desired inclusion.

To prove (i), let $\bar{\mu}\in \Per(X_{\receivingSgroup},G)$. There exist elements $x_1,\dots, x_m\in \Periodic(X_\receivingSgroup,G)$ such that
$$\supp(\bar{\mu})=\bigcup_{i=1}^{m}Gx_i,$$
so by the equality proven above and Lemma \ref{image_of_periodic}, we have
$\supp(\pi_*\bar{\mu})=\bigcup_{i=1}^{m}\pi(Gx_i)=\bigcup_{i=1}^{m}S\pi(x_i)$. Taking $m=1$ proves the ergodic case.

To prove (ii) and (iii), by (i) and weak-* continuity of $\pi_*$ we have $\Ext_{\receivingSgroup}(X,S)=\im(\pi_*)=\pi_*\left(\overline{\Per(X_{\receivingSgroup},G)}\right)\subseteq \overline{\Per(X,S)}$,
and the same argument shows that $\Ext_{\receivingSgroup}(X,S)\subseteq \overline{\Perg(X,S)}$.
\end{proof}

Combining the previous results together yields the following theorem.

\secondtheorem*

\begin{proof}
We prove the periodic case; the ergodic periodic case is identical. By Proposition \ref{per_are_ext}, $\Per(X,S)\subseteq \Ext_{\freeSgroup}(X,S)$. By Proposition \ref{supp_lemma}(ii), $\Ext_{\freeSgroup}(X,S) \subseteq \overline{\Per(X,S)}$. By Proposition \ref{prop:ext_weak_closed}, $\Ext_{\freeSgroup}(X,S)$ is weak-* closed, so $\Ext_{\freeSgroup}(X,S) = \overline{\Per(X,S)}$.

Finally, since $(\mathcal{A}^S)_{\freeSgroup}$ is topologically conjugate to $\mathcal{A}^\Gamma$ for every finite alphabet $\mathcal{A}$, if (i) and (ii) are satisfied, then $\overline{\Per(\mathcal{A}^S,S)} =     \Ext_{\freeSgroup}(\mathcal{A}^S,S)=\mathcal{M}_S(\mathcal{A}^S)$, so $S$ has the \textsc{pa} property.
\end{proof}

\section{Semigroups with the \textsc{pa} property}
\label{sec4}

In this section we aim to prove that some particular families of semigroups have the \textsc{(e)pa} property. The strategy will be to take an embeddable semigroup $S$ and its free $S$-group $\freeSgroup = (\Gamma,\gamma)$, check that $\Gamma$ has the \textsc{(e)pa} property and that every measure in $\mathcal{M}_S(\mathcal{A}^S)$ is $\freeSgroup$-extensible, and then appeal to Theorem \ref{thm:B} to conclude.

First, we observe that, due to Proposition \ref{prop:PA_are_residually_finite}, a necessary condition for a semigroup to have the \textsc{pa} property is to be residually a finite group. In particular, every group that has the \textsc{pa} property must be residually finite. Thus, we will focus on embeddable semigroups $S$ that are residually a finite group and such that the underlying group $\Gamma$ of the free $S$-group is residually finite. To check these two conditions, it suffices to check that $\Gamma$ is residually finite, as this directly implies that $S$ is residually a finite group. However, the converse might not be the case. An example of this would be the Baumslag--Solitar semigroup
		$$\textup{BS}(2,3)^+=\langle a,b\mid ab^2=b^3a\rangle,$$
		which is residually a finite group \cite[Theorem 4.5]{jackson2002decision}, while the corresponding Baumslag-Solitar group $\textup{BS}(2,3)$ is known to be non-Hopfian, and hence non-residually finite \cite{baumslag1962some}.

\subsection{The left amenable case}

The following result was proven in \cite{ren2018}.

\begin{theorem}[{\cite[Theorem 1.1]{ren2018}}]\label{thm:ren}
Let $G$ be a discrete countable residually finite amenable group acting on a compact metric space $X$ with specification property. Then $\Perg(X, G)$ is dense in $\mathcal{M}_G(X)$ in the weak-* topology.
\end{theorem}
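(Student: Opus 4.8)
The plan is to approximate an arbitrary $\mu\in\mathcal{M}_G(X)$ by an ergodic measure carried on a single finite orbit of a point that is periodic with respect to a well-chosen finite-index normal subgroup $N\trianglelefteq G$, the finite quotient $G/N$ playing the role of a ``F\o{}lner window''. Fix $\mu$, a finite $\mathcal{F}\subseteq\mathcal{C}(X)$ and $\varepsilon>0$; by uniform continuity pick $\delta>0$ so that $d(x,x')<\delta$ forces $|f(x)-f(x')|<\varepsilon/4$ for all $f\in\mathcal{F}$. First I would discretize the ergodic decomposition $\mu=\int\mu_\omega\,d\mathbb{P}(\omega)$: choose finitely many ergodic components $\mu_1,\dots,\mu_k$ and rational weights $c_1,\dots,c_k>0$ with $\sum_ic_i=1$ such that $\sum_ic_i\mu_i$ is $\varepsilon/4$-close to $\mu$ against every $f\in\mathcal{F}$. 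This is the mechanism by which non-ergodic targets will be reached from ergodic periodic approximants: a genuinely $N$-periodic point whose orbit spends a $c_i$-fraction of its ``time'' near the $i$-th component will do the job.

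Next I would combine amenability with residual finiteness. There is a decreasing sequence $(N_n)$ of finite-index normal subgroups with $\bigcap_nN_n=\{1_G\}$ admitting fundamental domains $D_n$ (transversals of $G/N_n$) that form a F\o{}lner sequence, which is the structural fact that residually finite amenable groups possess finite quotients with asymptotically invariant coset systems. By Lindenstrauss's pointwise ergodic theorem for amenable actions (along $(D_n)$, which we may take tempered), fix for each $i$ a $\mu_i$-generic point $x_i$. Then for $n$ large one cuts $D_n$ into small F\o{}lner cells and distributes them into blocks $D_n=\bigsqcup_{i=1}^kD_n^{(i)}$ with $|D_n^{(i)}|/|D_n|\to c_i$, arranged so that on all but a negligible density of cells the empirical average $\tfrac{1}{|\text{cell}|}\sum f(g\cdot x_i)$ is $\varepsilon/4$-close to $\int f\,d\mu_i$ for each $f\in\mathcal{F}$.

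The heart of the argument is to use the specification property — in the strong, periodic form available for residually finite amenable group actions — to produce a single point $y\in X$ with $N_n\subseteq\mathrm{Stab}(y)$ that $\delta$-shadows $x_i$ over the $K$-interior of $D_n^{(i)}$ for every $i$, where $K$ is the gap size dictated by specification, so that these interiors are $K$-separated and the discarded boundary has negligible density. Since $N_n\subseteq\mathrm{Stab}(y)$ has finite index, $Gy$ is a finite transitive $G$-set, so $\nu:=\tfrac{1}{|Gy|}\sum_{z\in Gy}\delta_z\in\Perg(X,G)$, and $N_n$-periodicity of $y$ gives $\int f\,d\nu=\tfrac{1}{|D_n|}\sum_{g\in D_n}f(g\cdot y)$. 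Chaining the errors then finishes: shadowing replaces $g\cdot y$ by $g\cdot x_i$ up to $\varepsilon/4$ plus a $2\|f\|_\infty\cdot(\text{boundary density})$ term; cellwise genericity replaces the block averages by $\int f\,d\mu_i$ up to $\varepsilon/4$; the ratios $|D_n^{(i)}|/|D_n|$ approximate $c_i$; and $\sum_ic_i\mu_i$ approximates $\mu$; altogether $|\int f\,d\nu-\int f\,d\mu|<\varepsilon$ for all $f\in\mathcal{F}$.

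The main obstacle is exactly this last construction: turning raw specification into a point that is simultaneously genuinely $N_n$-periodic and controlled over a tiling of $G$ by cosets of a finite-index subgroup whose cells are F\o{}lner. This fuses the specification property with the tiling machinery for amenable groups (Ornstein--Weiss, Downarowicz--Huczek--Zhang) and with the existence of F\o{}lner finite-quotient coset systems for residually finite amenable groups; it is the technical core carried out in \cite{ren2018}, whereas the remaining steps amount to bookkeeping on weak-* neighborhoods.
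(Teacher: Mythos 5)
The paper does not actually prove this statement: it is imported verbatim as \cite[Theorem 1.1]{ren2018} and used as a black box, so there is no internal argument to compare yours against. Your sketch reconstructs the strategy of the cited proof correctly in outline: discretizing the ergodic decomposition into a finite rational convex combination, invoking the fact (due to Weiss) that a residually finite amenable group admits a nested sequence of finite-index normal subgroups $N_n$ with trivial intersection whose transversals $D_n$ can be taken F\o{}lner, choosing generic points for the components, and computing the empirical measure of a periodic orbit. The final bookkeeping is also sound: if $N_n\subseteq\mathrm{Stab}_G(y)$ then indeed $\int f\,d\nu=\frac{1}{|D_n|}\sum_{g\in D_n}f(g\cdot y)$, and $\nu$ is ergodic because it is supported on a single finite orbit on which $G$ acts transitively (cf.\ Proposition \ref{prop:structure_periodic_measures}).

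The genuine gap is the step you yourself identify as the heart of the matter. The hypothesis is the plain specification property: one shadowing point for finitely many $K$-separated finite windows, with no periodicity clause. Your construction requires a point $y$ lying in the fixed-point set of $N_n$, and bare specification does not even guarantee that this set is nonempty --- already for $G=\Z$, a mixing SFT whose transition matrix is primitive with zero trace has specification but no fixed points, so $\mathrm{Fix}(N)$ can be empty for shallow subgroups $N$. Producing genuinely $N_n$-periodic shadowing points from specification plus residual finiteness and amenability (e.g.\ by applying specification over ever larger unions of $N_n$-translates of the window and extracting a limit that actually lands in $\mathrm{Fix}(N_n)$) is precisely the content of the theorem, and you justify it by appealing to the ``strong, periodic form available for residually finite amenable group actions'' and then citing \cite{ren2018} --- that is, by citing the statement being proved. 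As written, the proposal is circular at its decisive step; the surrounding amenable-group bookkeeping is fine, but without an independent construction of the periodic point it does not constitute a proof.
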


Since the shift action on the full $G$-shift $\mathcal{A}^G$ trivially satisfies the specification property, this result immediately implies that residually finite amenable groups have the \textsc{epa} property. We want to establish an analogous result for semigroups.

A bicancellative semigroup $S$ is {\bf left amenable} if there exists a \textbf{left F\o lner sequence}, namely a sequence $(F_n)_n$ of finite subsets of $S$ such that
$$
\lim_{n\to\infty}\frac{|sF_n \triangle F_n|}{|F_n|}=0 \quad \text{ for every } s \in S.
$$
Check \cite{argabright1967semigroups,frey1960studies,namioka1964folner} for further details. 

We need to check that when $S$ is left amenable, residually a finite group, and $\freeSgroup=(\Gamma,\gamma)$ is the free $S$-group, $\Gamma$ has the \textsc{epa} property. In order to do this, we prove that $\Gamma$ is amenable and residually finite.

\begin{proposition}[{\cite[Lemma 1]{Donnelly0}, \cite[Proposition 4.2]{bricenobustosdonoso1}}]\label{prop:right_fractions_amenable}
If $S$ is a bicancellative and left amenable semigroup, then $S$ is left reversible. In addition, if $\GrRightFrac{S} = (\Gamma_S,\gamma)$ is the group of right fractions of $S$, then $\Gamma_S$ is amenable.
\end{proposition}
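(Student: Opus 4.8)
The plan is to prove the two assertions in turn, using throughout the left F\o lner sequence $(F_n)_n$ provided by left amenability.

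\emph{Left reversibility.} Fix $s,t\in S$. Since $S$ is left cancellative, $|sF_n|=|tF_n|=|F_n|$, so the F\o lner condition $|sF_n\triangle F_n|/|F_n|\to 0$ is equivalent to $|sF_n\cap F_n|/|F_n|\to 1$, and likewise for $t$. Applying inclusion--exclusion to the subsets $sF_n\cap F_n$ and $tF_n\cap F_n$ of $F_n$ yields
\[
|sF_n\cap tF_n|\ \ge\ |sF_n\cap tF_n\cap F_n|\ \ge\ |sF_n\cap F_n|+|tF_n\cap F_n|-|F_n|,
\]
whence $|sF_n\cap tF_n|/|F_n|\to 1$; in particular $sF_n\cap tF_n\neq\varnothing$ for $n$ large. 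Choosing $f,g\in F_n$ with $sf=tg$ exhibits an element of $sS\cap tS$, so $S$ is left reversible. Combined with bicancellativity, this makes Remark~\ref{rem:unique_receiving_group} (Dubreil--Ore) applicable, and the group of right fractions $\GrRightFrac{S}=(\Gamma_S,\gamma)$ exists.

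\emph{Amenability of $\Gamma_S$.} Identify $S$ with $\gamma(S)\subseteq\Gamma_S$, and regard each uniform probability vector $\mu_n=\frac{1}{|F_n|}\sum_{f\in F_n}\delta_f$ as an element of the set of means on $\ell^\infty(\Gamma_S)$, which is weak-* compact by Banach--Alaoglu. Let $m$ be a weak-* cluster point of $(\mu_n)_n$. For $s\in S$, since $f\mapsto sf$ is injective on $F_n$, left translation by $s$ sends $\mu_n$ to the uniform measure on $sF_n$, so $\|s\cdot\mu_n-\mu_n\|_1=|sF_n\triangle F_n|/|F_n|\to 0$. Passing along the subnet defining $m$ and using that left translation is weak-* continuous on $\ell^\infty(\Gamma_S)^*$, we obtain $s\cdot m=m$ for every $s\in S$. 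Finally, left translation by any $g\in\Gamma_S$ is a bijection of $\Gamma_S$, hence injective on the set of means; from $s\cdot(s^{-1}\cdot m)=m=s\cdot m$ we deduce $s^{-1}\cdot m=m$. Thus $m$ is invariant under $\gamma(S)\cup\gamma(S)^{-1}$, which generates $\Gamma_S$, so $m$ is a left-invariant mean and $\Gamma_S$ is amenable.

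The only delicate points are bookkeeping ones: confirming that the F\o lner estimates survive left cancellativity (so that symmetric differences and $\ell^1$ norms are computed without double-counting), and that the weak-* cluster-point argument is legitimate (weak-* compactness of the set of means, weak-* continuity of translation). Neither is a genuine obstacle; the substantive content is simply that an invariant mean for the subsemigroup $S$ automatically extends to an invariant mean for the ambient group of right fractions, once one uses that $\Gamma_S$ is generated by $\gamma(S)$ together with its inverses.
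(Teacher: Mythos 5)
Your proof is correct. The paper does not prove this proposition itself --- it is quoted from the literature (Donnelly's Lemma~1 and Proposition~4.2 of the companion paper on natural extensions) --- but your two steps are exactly the standard arguments behind those citations: the inclusion--exclusion estimate $|sF_n\cap tF_n|\ge |sF_n\cap F_n|+|tF_n\cap F_n|-|F_n|$ (valid because left cancellativity gives $|sF_n|=|F_n|$, so the F\o lner condition is equivalent to $|sF_n\cap F_n|/|F_n|\to 1$) yields left reversibility, and the weak-* cluster point of the uniform means on $F_n$, viewed inside $\ell^\infty(\Gamma_S)^*$, is $\gamma(S)$-invariant and hence invariant under the subgroup generated by $\gamma(S)$, which is all of $\Gamma_S$. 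Both the compactness of the set of means and the weak-* continuity of translation are used correctly, so there is no gap.
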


Similarly, residual finiteness translates in the reversible case as well.

\begin{proposition}\label{prop:residually_finite_embeddings}
    Let $S$ be a left reversible semigroup, and $(\Gamma_S,\gamma)$ be the group of right fractions of $S$. If $S$ is residually a finite group, then $\Gamma_S$ is residually finite.
\end{proposition}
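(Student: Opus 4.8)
The plan is to produce an explicit chain of finite quotients of $\Gamma_S$ separating a given nontrivial element. Recall that by hypothesis $S$ is left reversible and residually a finite group, so by Remark~\ref{rem:unique_receiving_group} the group of right fractions $\GrRightFrac{S}=(\Gamma_S,\gamma)$ exists, is the unique receiving $S$-group, and every element $g\in\Gamma_S$ can be written as $g=\gamma(s)\gamma(t)^{-1}$ for some $s,t\in S$. Using Proposition~\ref{prop:residually_finite_groups}, it suffices to show that for any $g\in\Gamma_S$ with $g\ne 1_{\Gamma_S}$ there is a finite group $F$ and a group morphism $\Gamma_S\to F$ not killing $g$.

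First I would write $g=\gamma(s)\gamma(t)^{-1}$; since $g\neq 1$, we have $\gamma(s)\neq\gamma(t)$, hence $s\neq t$ in $S$ (as $\gamma$ is an embedding). Because $S$ is residually a finite group, there is a finite group $H$ and a semigroup morphism $\theta\colon S\to H$ with $\theta(s)\neq\theta(t)$. The key step is to extend $\theta$ to a group morphism $\hat\theta\colon\Gamma_S\to H$. This is exactly where the universal property of the free $S$-group enters: $(H,\theta)$ need not be an $S$-group on the nose (the condition $\langle\theta(S)\rangle=H$ may fail), but we may replace $H$ by $H':=\langle\theta(S)\rangle\le H$, a subgroup of a finite group and hence itself finite, so that $(H',\theta)$ is genuinely an $S$-group. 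Since $\GrRightFrac{S}$ is (isomorphic to) the free $S$-group by Remark~\ref{rem:unique_receiving_group}, the universal property yields a group morphism $\hat\theta\colon\Gamma_S\to H'$ with $\hat\theta\circ\gamma=\theta$. Then
$$
\hat\theta(g)=\hat\theta(\gamma(s))\hat\theta(\gamma(t))^{-1}=\theta(s)\theta(t)^{-1}\neq 1_{H'},
$$
because $\theta(s)\neq\theta(t)$. Thus $g$ survives in a finite quotient of $\Gamma_S$, and since $g$ was an arbitrary nontrivial element, $\Gamma_S$ is residually finite.

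The main obstacle is conceptual rather than computational: one must recognize that a semigroup morphism $S\to H$ into a finite group does not automatically give a group morphism out of $\Gamma_S$, and that the correct fix is twofold — restrict the codomain to the subgroup generated by the image so that one actually has an $S$-group, and then invoke the universal property (available precisely because, for left reversible $S$, the group of right fractions \emph{is} the free $S$-group, per Remark~\ref{rem:unique_receiving_group}). Once this is in place the rest is the routine verification above using $g=\gamma(s)\gamma(t)^{-1}$. Alternatively, and perhaps more cleanly, one could bypass the right-fractions description entirely and argue: by Proposition~\ref{prop:charac_res_finite_embedding}, $S$ embeds in a Cartesian product $\prod_i F_i$ of finite groups, which is residually finite; the subgroup $\langle S\rangle$ it generates there is then residually finite and is a receiving $S$-group, hence by uniqueness (Remark~\ref{rem:unique_receiving_group}) isomorphic to $\Gamma_S$ as an $S$-group, so $\Gamma_S$ is residually finite.
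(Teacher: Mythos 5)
Your proposal is correct. Your primary argument is a genuinely different route from the paper's: you separate a single nontrivial element $g=\gamma(s)\gamma(t)^{-1}$ at a time, pushing a separating semigroup morphism $\theta\colon S\to H$ forward through the universal property of the free $S$-group (after the necessary and correctly identified fix of shrinking the codomain to $H'=\langle\theta(S)\rangle$ so that $(H',\theta)$ is an honest $S$-group), and then checking $\hat\theta(g)=\theta(s)\theta(t)^{-1}\ne 1$. The paper instead argues globally: by Proposition \ref{prop:charac_res_finite_embedding} it embeds $S$ into a residually finite group $H$ (a Cartesian product of finite groups), observes that $\langle S\rangle\le H$ is a receiving $S$-group, and invokes the uniqueness of receiving $S$-groups in the left reversible case (Remark \ref{rem:unique_receiving_group}) to conclude $\Gamma_S$ is a subgroup of $H$, hence residually finite. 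Both arguments lean on the same Remark, but on different halves of it: yours uses that $\Gamma_S$ \emph{is} the free $S$-group (so it admits morphisms onto arbitrary $S$-groups), while the paper's uses that any receiving $S$-group is isomorphic to $\Gamma_S$. Your approach buys an explicit finite quotient witnessing each separation and avoids building the infinite product; the paper's is shorter and reduces everything to the closure of residual finiteness under subgroups. Your closing ``alternative'' paragraph is in fact essentially verbatim the paper's own proof. One trivial remark: you do not need $\gamma$ to be an embedding to deduce $s\ne t$ from $\gamma(s)\ne\gamma(t)$; that is just the contrapositive of $\gamma$ being a function.
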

	
\begin{proof}
    Assume $S$ is residually a finite group. By Proposition \ref{prop:charac_res_finite_embedding} we may assume $S\leq H$, where $H$ is a residually finite group. Hence, the subgroup of $H$ generated by $S$, together with the natural embedding $\iota\colon S\to H$, is a receiving $S$-group and thus isomorphic to the group of right fractions of $S$; see Remark \ref{rem:unique_receiving_group}. As a consequence, $\Gamma_S$ is isomorphic to a subgroup of the residually finite group $H$ and is thus residually finite as well.
\end{proof}

    \begin{corollary}
    \label{cor:epa_property_amenable}
        If $S$ is left amenable, residually a finite group and $\freeSgroup=(\Gamma,\gamma)$ is the free $S$-group, then $\Gamma$ has the \textsc{epa} property. 
    \end{corollary}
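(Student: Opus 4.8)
The plan is to show that the group $\Gamma$ underlying the free $S$-group is a countable, discrete, residually finite, amenable group, and then quote Theorem \ref{thm:ren}: as observed in the discussion immediately following that theorem, the shift action on any full shift $\mathcal{A}^\Gamma$ satisfies the specification property trivially, so every countable residually finite amenable group has the \textsc{epa} property. Thus the whole argument reduces to an assembly of the preceding propositions.

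First I would note that, since $S$ is residually a finite group, it is in particular bicancellative (by the remark following Proposition \ref{prop:charac_res_finite_embedding}) and embeddable. Combining bicancellativity with left amenability, Proposition \ref{prop:right_fractions_amenable} yields two facts at once: $S$ is left reversible, so its group of right fractions $\GrRightFrac{S} = (\Gamma_S,\gamma)$ exists, and $\Gamma_S$ is amenable. By Remark \ref{rem:unique_receiving_group}, for a bicancellative left reversible semigroup $\GrRightFrac{S}$ is the unique receiving $S$-group up to isomorphism of $S$-groups, and is hence isomorphic to the free $S$-group $\freeSgroup=(\Gamma,\gamma)$; in particular $\Gamma\simeq\Gamma_S$ as groups, so $\Gamma$ is amenable. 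Next, applying Proposition \ref{prop:residually_finite_embeddings} to the left reversible semigroup $S$, which is residually a finite group, gives that $\Gamma_S$ — and therefore $\Gamma$ — is residually finite. Finally, $\Gamma=\langle\gamma(S)\rangle$ is countable because $S$ is. Hence $\Gamma$ is a countable discrete residually finite amenable group, so by Theorem \ref{thm:ren} the set $\Perg(\mathcal{A}^\Gamma,\Gamma)$ is weak-* dense in $\mathcal{M}_\Gamma(\mathcal{A}^\Gamma)$ for every finite alphabet $\mathcal{A}$, i.e., $\Gamma$ has the \textsc{epa} property.

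There is no genuine obstacle here; the corollary is pure bookkeeping over the earlier results. The only points that require a little care are checking that "residually a finite group" forces bicancellativity (so the group-of-right-fractions machinery of Proposition \ref{prop:right_fractions_amenable} and Remark \ref{rem:unique_receiving_group} applies) and that the group of right fractions is isomorphic, as a group, to the free $S$-group rather than to some other receiving $S$-group — both supplied verbatim by Remark \ref{rem:unique_receiving_group}.
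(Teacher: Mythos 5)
Your proof is correct and follows essentially the same route as the paper: identify $\freeSgroup$ with the group of right fractions $\GrRightFrac{S}$ via Remark \ref{rem:unique_receiving_group}, deduce amenability and residual finiteness of $\Gamma_S\simeq\Gamma$ from Propositions \ref{prop:right_fractions_amenable} and \ref{prop:residually_finite_embeddings}, and conclude with Theorem \ref{thm:ren}. The extra care you take with bicancellativity and countability is consistent with, and slightly more explicit than, the paper's argument.
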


\begin{proof}
    By Remark \ref{rem:unique_receiving_group}, $\freeSgroup$ is isomorphic to $\GrRightFrac{S}=(\Gamma_S,\gamma)$. In particular, $\Gamma\simeq \Gamma_S$. By Proposition \ref{prop:right_fractions_amenable} and Proposition \ref{prop:residually_finite_embeddings}, we know $\Gamma_S$ is amenable and residually finite, hence so is $\Gamma$. By Theorem \ref{thm:ren}, we conclude $\Gamma$ has the \textsc{epa} property.
\end{proof}

Regarding $\freeSgroup$-extensibility of $S$-invariant measures in the reversible case, we have the following.

\begin{theorem}
\label{thm:extensibility_reversible}
Let $S \acts X$ be a continuous action. If $S$ is left reversible and bicancellative, and $\freeSgroup$ is the free $S$-group, then $\textup{Ext}_{\freeSgroup}(X_{\freeSgroup},S)=\mathcal{M}_S(X)$.
\end{theorem}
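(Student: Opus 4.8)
The plan is to prove the nontrivial inclusion, namely that every $\mu\in\mathcal{M}_S(X)$ is $\freeSgroup$-extensible; the reverse inclusion $\Ext_{\freeSgroup}(X,S)\subseteq\mathcal{M}_S(X)$ is automatic since $\pi$ is $\gamma$-equivariant. First I would fix a convenient model of the free $S$-group: as $S$ is left reversible and bicancellative, Ore's theorem together with Remark~\ref{rem:unique_receiving_group} let me take $\freeSgroup=(\Gamma,\gamma)$ to be the group of right fractions, so that $\gamma$ is an embedding and $\Gamma=\gamma(S)\gamma(S)^{-1}$. The combinatorial heart of the proof is the following consequence of left reversibility: \emph{for every finite $F\subseteq\Gamma$ there is $t\in S$ with $F\gamma(t)\subseteq\gamma(S)$}. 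Writing each $h\in F$ as $\gamma(a_h)\gamma(b_h)^{-1}$, an easy induction from pairwise reversibility (if $u\in b_1S\cap b_2S$ then $uS\subseteq b_1S\cap b_2S$; now intersect with $b_3S$; and so on) shows $\bigcap_{h\in F}b_hS\neq\varnothing$, and any $t$ in this intersection works, since $t\in b_hS$ gives $h\gamma(t)=\gamma(a_h)\gamma(b_h)^{-1}\gamma(t)\in\gamma(a_hS)\subseteq\gamma(S)$.

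Next I would build the extension by prescribing finite-dimensional marginals. Given a finite $F\subseteq\Gamma$ and a valid $t$ as above, injectivity of $\gamma$ yields a unique $\phi_{F,t}\colon F\to S$ with $h\gamma(t)=\gamma(\phi_{F,t}(h))$; put $\Phi_{F,t}\colon X\to X^F$, $\Phi_{F,t}(z)=(\phi_{F,t}(h)\cdot z)_{h\in F}$, and $\mu_F:=(\Phi_{F,t})_*\mu$. The crucial point is that $\mu_F$ does not depend on $t$: if $t'$ also works, choose $u=tp=t'q\in tS\cap t'S$ with $p,q\in S$; then $F\gamma(u)\subseteq\gamma(S)$ and $\phi_{F,u}(h)=\phi_{F,t}(h)\,p$, so $\Phi_{F,u}(z)=\Phi_{F,t}(p\cdot z)$ and hence $(\Phi_{F,u})_*\mu=(\Phi_{F,t})_*(p_*\mu)=(\Phi_{F,t})_*\mu$, because $p_*\mu=\mu$ — this last equality \emph{is} the $S$-invariance of $\mu$ — and symmetrically the same holds for $t'$. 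The family $(\mu_F)_F$ is then consistent under coordinate restriction (use one $t$ valid for the larger set), so the Kolmogorov extension theorem produces a unique $\bar\mu\in\mathcal{M}(X^\Gamma)$ with these marginals. It remains to check three things. \emph{Support}: for fixed $s\in S$, $h\in\Gamma$, apply the construction to $F=\{h,\gamma(s)h\}$; if $h\gamma(t)=\gamma(r)$ then $\gamma(s)h\gamma(t)=\gamma(sr)$, so every point of $\im(\Phi_{F,t})$ is of the form $(r\cdot z,\,sr\cdot z)$ and satisfies $s\cdot x_h=x_{\gamma(s)h}$; thus $\mu_F$, hence $\bar\mu$, gives this set full measure, and intersecting over the countably many pairs $(s,h)$ yields $\bar\mu(X_{\freeSgroup})=1$. \emph{Projection}: taking $F=\{1_\Gamma\}$ gives $\phi_{F,t}(1_\Gamma)=t$, so $\pi_*\bar\mu=t_*\mu=\mu$. \emph{Invariance}: since $\gamma(S)$ generates $\Gamma$ it suffices to see $\sigma_{\gamma(s)*}\bar\mu=\bar\mu$ for $s\in S$; the $F$-marginal of $\sigma_{\gamma(s)*}\bar\mu$ is the $F\gamma(s)$-marginal of $\bar\mu$ reindexed along $h\gamma(s)\mapsto h$, and a direct check — if $t'$ is valid for $F\gamma(s)$ then $st'$ is valid for $F$, and $\phi_{F\gamma(s),t'}(h\gamma(s))=\phi_{F,st'}(h)$ — identifies it with $\mu_F$.

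Putting these together, $\bar\mu\in\mathcal{M}_\Gamma(X_{\freeSgroup})$ with $\pi_*\bar\mu=\mu$, so $\mu\in\Ext_{\freeSgroup}(X,S)$ and therefore $\Ext_{\freeSgroup}(X,S)=\mathcal{M}_S(X)$; as a byproduct $X_{\freeSgroup}\neq\varnothing$ whenever $\mathcal{M}_S(X)\neq\varnothing$, and note that the argument never uses surjectivity of the action. I expect the only real obstacle to be the bookkeeping caused by the non-uniqueness of the ``denominator'' $t$, i.e., verifying cleanly that $\mu_F$, the support statement, and the invariance statement are all independent of the auxiliary choices; conceptually there is nothing deep here, since left reversibility collapses every finite-dimensional marginal of any $\Gamma$-invariant extension to the single pushforward $(\Phi_{F,t})_*\mu$, which then forces both the support condition and $\Gamma$-invariance essentially for free.
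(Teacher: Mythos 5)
Your proposal is correct and follows essentially the same route as the paper's appendix: use the thickness of $\gamma(S)$ in the group of right fractions to pull each finite window $F\subseteq\Gamma$ back into $S$, define the $F$-marginal from $\mu$ via the resulting map, use $S$-invariance to show independence of the auxiliary denominator, and then apply Kolmogorov's extension theorem before verifying concentration on $X_{\freeSgroup}$, $\Gamma$-invariance, and the projection condition. The only difference is cosmetic but welcome: by realizing each marginal directly as the pushforward $(\Phi_{F,t})_*\mu$ you get $\sigma$-additivity on $\mathcal{B}(X^F)$ for free, bypassing the paper's explicit verification of finite additivity, continuity at $\varnothing$, and the Carath\'eodory step.
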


\begin{remark}
While this work was in preparation, a version of Theorem \ref{thm:extensibility_reversible} was announced in \cite[Theorem 2.9]{farhangi1}. For completeness, we include our own proof in the Appendix.
\end{remark}

A consequence of the previous discussion is the following.

\thirdtheorem*

\begin{proof}
    By Corollary \ref{cor:epa_property_amenable}, the free $S$-group $\freeSgroup=(\Gamma,\gamma)$ is such that $\Gamma$ has the \textsc{epa} property. Since $S$ is residually a finite group, it is embeddable and, in particular, bicancellative. By Proposition \ref{prop:right_fractions_amenable}, $S$ is left reversible. Therefore, by Theorem \ref{thm:extensibility_reversible}, $\textup{Ext}_{\freeSgroup}(X_{\freeSgroup},S)=\mathcal{M}_S(X)$. We conclude by appealing to Theorem \ref{thm:B}.
\end{proof}

\subsection{The free case}

Let $d$ be a positive integer and consider the free group $\F_d$ on the generators $\{a_1,\dots,a_d\}$. The following result was proven in \cite{bowen2003periodicity}.

\begin{theorem}[{\cite[Theorem 3.4]{bowen2003periodicity}}]
\label{thm:bowen}
For every $d \geq 1$, the group $\F_d$ has the \textsc{pa} property.
\end{theorem}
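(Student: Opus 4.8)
The plan is to reprove that $\F_d$ has the \textsc{pa} property by directly realizing invariant measures as limits of finite colored $\F_d$-sets, exploiting the defining feature of the free group: since $\F_d$ has no relations, \emph{any} $d$-tuple of permutations $\sigma_1,\dots,\sigma_d$ of a finite set $V$ defines an action $\F_d\acts V$, with no compatibility conditions to verify. Fix a finite alphabet $\mathcal{A}$. By definition of the weak-* topology it suffices to show that for every $\mu\in\mathcal{M}_{\F_d}(\mathcal{A}^{\F_d})$, every $R\ge 1$ and every $\varepsilon>0$ there is a periodic measure $\nu$ with $|\nu([p;B_R])-\mu([p;B_R])|<\varepsilon$ for all $p\in\mathcal{A}^{B_R}$, where $B_R\subseteq\F_d$ denotes the ball of radius $R$. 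Every such $\nu$ will be produced from a triple $(V,c,(\sigma_j)_j)$ consisting of a finite set $V$, a coloring $c\colon V\to\mathcal{A}$, and permutations defining $\F_d\acts V$: the associated periodic points are $x_v(g)=c(g\cdot v)$, and $\nu$ is the push-forward of the uniform measure on $V$, so that $\nu([p;B_R])$ is exactly the frequency with which the read-off pattern $w\mapsto c(w\cdot v)$, $w\in B_R$, equals $p$.

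First I would extract the local statistics of $\mu$. Fix a radius $r=2R$, let $\mu_r(p)=\mu([p;B_r])$ be the law of the $r$-ball pattern, and for each generator $a_j$ let $\lambda_j$ be the joint law under $\mu$ of the pair of $r$-ball patterns centered at $e$ and at $a_j$. Invariance of $\mu$ forces both marginals of $\lambda_j$ to equal $\mu_r$, and $\lambda_j$ is supported on $a_j$-compatible pairs (pairs agreeing on the overlap of $B_r$ and $a_j^{-1}B_r$). I would then take $|V|$ large, assign to the vertices pattern-labels so that a $\mu_r(p)$-fraction carry label $p$, and set $c(v)$ to be the value of $v$'s label at the identity. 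Because the two marginals of $\lambda_j$ coincide, the prescribed transition counts $\approx |V|\lambda_j(p,p')$ have row and column sums matching the label counts, so an elementary integer-flow (Hall, or Birkhoff--von Neumann rounding) argument yields a permutation $\sigma_j$ realizing these counts. Crucially, each $\sigma_j$ is built independently against the single, already-fixed labeling, so all generators share one coloring.

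The core of the argument is to certify that this bookkeeping genuinely reproduces the pattern frequency $\mu_R$. Call a vertex $v$ \emph{good} if its $2R$-neighborhood in the graph $(V,(\sigma_j)_j)$ is a tree. For a good vertex the $a_j$-compatibility built into $\lambda_j$ propagates along geodesics, and an induction on word length shows that the read-off $B_R$-pattern $w\mapsto c(w\cdot v)$ agrees with the $B_R$-restriction of $v$'s label; hence good vertices reproduce $\mu_R$ up to rounding error. All the difficulty is therefore concentrated in bounding the fraction of \emph{bad} vertices, those lying on a cycle of length $\le 2R$, where monodromy can force spurious coincidences. This is exactly where non-amenability bites: unlike the amenable setting there are no Følner sets, so one cannot fill a single large ball and glue a negligible boundary---the tree's spheres are a constant fraction of its balls---and the finite model must instead look like $\mu$ at almost every vertex simultaneously. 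I expect this to be the main obstacle, and it is where Bowen's work concentrates: one chooses the $\sigma_j$ as uniformly random permutations subject only to the transition-count constraints and shows, by a first-moment computation, that the expected number of vertices on cycles of length $\le 2R$ is $O((2d-1)^{2R})$, independent of $|V|$. Since the number of labels and the radius are fixed while $|V|\to\infty$, this forces the bad fraction below $\varepsilon$.

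Finally I would fix such a realization $(V,c,(\sigma_j)_j)$ with at most $\varepsilon|V|$ bad vertices and label statistics within $\varepsilon$ of $\mu_r$. The bad vertices perturb each frequency $\nu([p;B_R])$ by at most $\varepsilon$, so the associated periodic measure $\nu$---a convex combination of the ergodic periodic measures carried by the orbits (equivalently, the connected components) of $\F_d\acts V$---satisfies $|\nu([p;B_R])-\mu([p;B_R])|<2\varepsilon$ for every $p\in\mathcal{A}^{B_R}$. As $R$ and $\varepsilon$ are arbitrary, $\Per(\mathcal{A}^{\F_d},\F_d)$ is weak-* dense in $\mathcal{M}_{\F_d}(\mathcal{A}^{\F_d})$ for every finite alphabet $\mathcal{A}$, which is the \textsc{pa} property. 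I note that this construction yields genuinely non-ergodic periodic measures and gives no control over individual orbits, which is precisely why it does not establish the \textsc{epa} property---consistent with the remark, quoted in the introduction, that Bowen's method does not settle \textsc{epa} for free groups.
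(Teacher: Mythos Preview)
The paper does not give its own proof of this statement: Theorem~\ref{thm:bowen} is simply quoted from \cite[Theorem 3.4]{bowen2003periodicity} and used as a black box in the proof of Theorem~\ref{thm:D}. There is therefore nothing in the paper to compare your proposal against.

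As a stand-alone sketch, what you outline is essentially Bowen's strategy: exploit that any $d$-tuple of permutations of a finite set $V$ defines an $\F_d$-action, label $V$ so as to match the $r$-ball statistics of $\mu$, realize the edge-transition statistics $\lambda_j$ by a bipartite matching / Birkhoff--von~Neumann rounding, and then use a random choice of permutations together with a first-moment bound on short cycles to make the fraction of ``bad'' vertices vanish as $|V|\to\infty$. The propagation step (good vertices reproduce the $B_R$-restriction of their label) and the final weak-* estimate are correct as stated. One point to tighten if you flesh this out: your first-moment bound is phrased for uniformly random permutations, but you are actually sampling each $\sigma_j$ uniformly among permutations with prescribed transition counts, i.e., as an independent product of random bijections between the label classes. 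The short-cycle computation must be carried out in this block model; it still gives an $O_R(1)$ expected count, but the combinatorics are not literally those of an unconstrained random permutation. With that adjustment the sketch is sound, and your closing remark that the construction yields only the \textsc{pa} (not \textsc{epa}) property is exactly the caveat the paper makes in its introduction.
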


Given $\Sigma \subseteq \{a_1^{\pm 1},\dots,a_d^{\pm 1}\}$, consider the subsemigroup $S = \left<\Sigma\right>^+$. Notice that if $\Sigma = \{a_1,\dots,a_d\}$, $S$ is the free semigroup $\F_d^+$. If $|\Sigma|=1$, then $S$ is $\N$; if $\Sigma = \Sigma^{-1}$, then $S$ is $\F_k$ with $2k = |\Sigma|$; otherwise, $S$ is a non-reversible semigroup. Without loss of generality, we will assume that $\Sigma \supseteq \{a_1,\dots,a_d\}$, and therefore $k = d$. Denote the set $\{a_1^{\pm 1},\dots,a_d^{\pm 1}\}$ by $\Sigma^{\pm}$. The free $S$-group is $\freegroup_d=(\F_d,\iota)$ with $\iota\colon S\to \F_d$ the inclusion. When dealing with elements of $S$ as elements of $\F_d$, we will omit any mention of $\iota$.

We want to use Theorem \ref{thm:bowen} and appeal Theorem \ref{thm:B} to establish the \textsc{pa} property for the semigroups $S = \left<\Sigma\right>^+$. It remains to show that every $S$-invariant measure on the full $S$-shift is $\F_d$-extensible. 

Let $\text{Cay}(S,\Sigma)$ be the right Cayley graph of $S$ with respect to $\Sigma$. The identity $1_S$ will be denoted by $\varepsilon$, and the ball of radius $r$ with center on $\varepsilon$ will be denoted by $B_r$. A subset $T\subseteq S$ will be called a \textbf{tree} if it induces a connected subgraph $(T,E(T))$ in $\text{Cay}(S,\Sigma)$. The \textbf{root} of a tree $T$ will be the unique element $r(T)\in T$ with minimal distance to $\varepsilon$.

Fix a finite alphabet $\mathcal{A}$. A \textbf{Markov $\Sigma$-tree chain} is a pair $(\mathbf{p},\mathbf{P})$ consisting of a positive probability vector $\mathbf{p}$ (i.e., $\mathbf{p} > \mathbf{0}$ and $\sum_{\ell\in\mathcal{A}}\mathbf{p}_\ell=1$) and a family $\mathbf{P} = \{\mathbf{P}^a: a \in \Sigma\}$ of real $|\mathcal{A}|\times |\mathcal{A}|$ stochastic matrices $\mathbf{P}^a$ (i.e., $\mathbf{P}^a \geq \mathbf{0}$ and $\sum_{\ell\in\mathcal{A}}\mathbf{P}^a_{k,\ell}=1$ for all $k\in\mathcal{A}$).

\begin{definition}\label{Markov_measure_def}
 Given a Markov $\Sigma$-tree chain $(\mathbf{p},\mathbf{P})$, a measure $\mu\in \mathcal{M}(\mathcal{A}^{S})$ is said to be \textbf{$(\mathbf{p},\mathbf{P})$-Markov} if, for every $x\in \mathcal{A}^{S}$ and every finite tree $T \subseteq S$ with root $\varepsilon$, we have
$$
\mu([x\, ;T])=\mathbf{p}_{x(\varepsilon)}\cdot \prod_{(t,at)\in E(T)}\mathbf{P}^{a}_{x(t),x(at)}.$$
A measure $\mu\in\mathcal{M}(\mathcal{A}^S)$ is said to be \textbf{Markov} if it is $(\mathbf{p},\mathbf{P})$-Markov for some Markov $\Sigma$-tree chain $(\mathbf{p},\mathbf{P})$.
\end{definition}

\begin{proposition}\label{prop:georgii}
For every Markov $\Sigma$-tree chain $(\mathbf{p},\mathbf{P})$, there exists a unique $(\mathbf{p},\mathbf{P})$-Markov measure on $\mathcal{A}^S$. Moreover, this measure is $S$-invariant if and only if 
\begin{enumerate}
\item $\mathbf{p}$ is a left eigenvector of each $\mathbf{P}^a$, i.e., $\mathbf{p}\mathbf{P}^a=\mathbf{p}$ for every $a\in \Sigma$, and
\item if $a, a^{-1} \in \Sigma$, then $\mathbf{p}_k\mathbf{P}^{a^{-1}}_{k,\ell} = \mathbf{p}_\ell\mathbf{P}^a_{\ell,k}$ for every $k,\ell \in \mathcal{A}$.
\end{enumerate}
\end{proposition}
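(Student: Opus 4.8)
The plan is to split the statement into three parts: (a) existence and uniqueness of the $(\mathbf{p},\mathbf{P})$-Markov measure on $\mathcal{A}^S$; (b) that the conditions (1)–(2) are necessary for $S$-invariance; and (c) that they are sufficient. For part (a), the idea is to invoke the Kolmogorov extension theorem (or a direct construction, as in Georgii's treatment of Markov fields on trees, whence the name of the proposition). Concretely, since $\mathrm{Cay}(S,\Sigma)$ is a tree, the prescribed values $\mu([x;T]) = \mathbf{p}_{x(\varepsilon)}\prod_{(t,at)\in E(T)}\mathbf{P}^a_{x(t),x(at)}$ are consistent: enlarging a finite subtree $T$ by one leaf edge $(t,at)$ multiplies the right-hand side by $\mathbf{P}^a_{x(t),x(at)}$, and summing over the new coordinate $x(at)\in\mathcal{A}$ gives back the old value because each $\mathbf{P}^a$ is stochastic. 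Consistency on nested trees plus the fact that finite subtrees of $S$ rooted at $\varepsilon$ cofinally exhaust $S$ yields a unique premeasure on the algebra of cylinders, which extends uniquely to $\mathcal{M}(\mathcal{A}^S)$. Uniqueness is immediate since cylinders over finite trees generate $\mathcal{B}(\mathcal{A}^S)$.

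For part (b), suppose $\mu$ is the $(\mathbf{p},\mathbf{P})$-Markov measure and is $S$-invariant, i.e.\ $\mu(a^{-1}B) = \mu(B)$ for every $a \in \Sigma$ (recall $(a\cdot x)(t) = x(ta)$, so $a^{-1}B$ pulls cylinders at $T$ back to cylinders at $Ta$). To get condition (1), I would compare the marginal of $\mu$ at a single vertex $t$ with the marginal at $\varepsilon$. Applying invariance along the generator $a$ relates the single-site marginal at $at$ with that at $t$; since the single-site marginal at $\varepsilon$ is $\mathbf{p}$ and the transition from $t$ to $at$ is governed by $\mathbf{P}^a$, writing out $\mu([x(at)=\ell])=\sum_k \mu([x(t)=k])\mathbf{P}^a_{k,\ell}$ and using that $S$-invariance forces all single-site marginals to equal $\mathbf{p}$ (one first checks by induction on word length that the marginal at any $t\in S$ equals $\mathbf{p}$, using stochasticity — actually this requires invariance), yields $\mathbf{p}\mathbf{P}^a = \mathbf{p}$. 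For condition (2), when $a, a^{-1}\in\Sigma$ one has two vertices $t$ and $ta$ joined by an edge, traversable in both directions; computing the joint two-site marginal of $(x(t),x(ta))$ in two ways — once viewing $ta$ as the $a$-child of $t$, once viewing $t$ as the $a^{-1}$-child of $ta$ — and using $S$-invariance to identify both with the $\varepsilon$-rooted edge marginal gives $\mathbf{p}_k\mathbf{P}^a_{k,\ell} = \mathbf{p}_\ell\mathbf{P}^{a^{-1}}_{\ell,k}$, which is (2). Some care is needed because a priori $t$ and $ta$ need not both lie in a tree rooted at $\varepsilon$ with the "natural" orientation; one resolves this by exploiting $S$-invariance to translate any edge to an edge incident to $\varepsilon$ and reading off the defining formula there, keeping track of orientation.

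For part (c), assume (1) and (2); I would show that the finite-dimensional distributions of $\mu$ are invariant under each generator $a\in\Sigma$, which suffices since cylinders generate the $\sigma$-algebra and $S$ is generated by $\Sigma$. Fix a finite tree $T$ rooted at $\varepsilon$ and a pattern $x$ on $T$; then $a^{-1}[x;T] = [x; Ta]$ where $Ta$ is the translated tree (rooted at $a$ if $a\in\Sigma$, or possibly closer to $\varepsilon$ if $T$ already "pointed backward" along $a^{-1}$). The measure of $[x;Ta]$ must be recomputed using a spanning subtree of $Ta$ rooted at $\varepsilon$: one decomposes $Ta$, adds the geodesic from $\varepsilon$ to $r(Ta)$, and uses conditions (1) and (2) to cancel the extra edges — condition (1) to sum out vertices hanging off the path with a forward orientation, and condition (2) (the detailed-balance/reversibility relation) to reverse the orientation of edges that must be traversed "against" their natural direction. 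The bookkeeping shows $\mu([x;Ta]) = \mu([x;T])$, giving $S$-invariance. The main obstacle I anticipate is exactly this orientation-reversal computation in part (c): making rigorous the claim that conditions (1) and (2) are precisely what is needed to re-root an arbitrary translated subtree and have the product formula come out unchanged, handling in particular the case where a single generator $a$ simultaneously creates forward edges at some vertices of $T$ and requires backward traversal at others. I would organize this via a clean lemma: for any finite tree $T'\subseteq S$ and any two admissible roots (vertices of $T'$), the two product formulas agree provided (1) and (2) hold — proved by induction on the distance between the two roots, the base case being a single edge (which is exactly condition (2), together with (1) to handle side-branches).
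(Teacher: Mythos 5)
The paper does not actually prove this proposition: its entire proof is a citation to Georgii (p.~240), where the correspondence between tree-indexed Markov chains and shift-invariant Markov measures is worked out. So there is nothing in the paper to compare your argument against step by step; what you have written is a reconstruction of the standard argument, and it is essentially correct. Parts (a) and (b) are fine as sketched: consistency under adding a leaf follows from row-stochasticity because every edge of an $\varepsilon$-rooted tree is oriented away from the root with a label in $\Sigma$; necessity of (1) comes from comparing the one-site marginal at $a$ (which the defining formula makes equal to $\mathbf{p}\mathbf{P}^a$) with the marginal at $\varepsilon$ via invariance; and necessity of (2) comes from applying $\sigma_a^{-1}$ to the cylinder over $\{\varepsilon,a^{-1}\}$ and reading off the two-site formula on $\{\varepsilon,a\}$. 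For part (c), the obstacle you flag is genuine but resolves more simply than your proposed induction suggests. Since $\varepsilon\in T$ and $\text{Cay}(S,\Sigma)$ is a subtree of the Cayley tree of $\F_d$, the root of $Ta$ is the projection of $\varepsilon$ onto $Ta$, and because $a=\varepsilon\cdot a\in Ta$ has length $1$, that root is either $a$ (when $a^{-1}\notin T$) or $\varepsilon$ (when $a^{-1}\in T$). In the first case no edge is reversed and one only attaches the geodesic edge $(\varepsilon,a)$ and sums it out using condition (1). In the second case exactly one edge is reversed, namely the translate of $(\varepsilon,a^{-1})$, whose label is $a^{-1}\in\Sigma$ with $a\in\Sigma$, so condition (2) applies verbatim and no other edge changes orientation. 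In particular the scenario you worry about --- a single translation forcing both forward and backward traversals deep inside $T$ --- cannot occur, and the general ``re-rooting at arbitrary admissible vertices'' lemma, while true, is not needed. With that observation your outline closes into a complete proof.
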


\begin{proof}
See \cite[p. 240]{georgii}.
\end{proof}

\begin{proposition}
\label{propExtFd}
For every Markov $\Sigma$-tree chain $(\mathbf{p},\mathbf{P})$ that induces an $S$-invariant measure $\mu$, there exists a family $\hat{\mathbf{P}} = \{\hat{\mathbf{P}}^a: a \in \Sigma^{\pm}\}$ such that the Markov $\Sigma^{\pm}$-tree chain $(\mathbf{p},\hat{\mathbf{P}})$ induces an $\F_d$-invariant measure $\bar{\mu}_{\F_d}$ that is an $\freegroup_d$-extension of $\mu$.
\end{proposition}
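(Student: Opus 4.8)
The plan is to define the extended family $\hat{\mathbf{P}}$ directly and then verify, using the criteria of Proposition \ref{prop:georgii}, that the resulting $\Sigma^{\pm}$-tree chain induces an $\F_d$-invariant Markov measure which projects onto $\mu$. For every generator $a\in\Sigma$ we of course set $\hat{\mathbf{P}}^a = \mathbf{P}^a$, so the only real content is to specify $\hat{\mathbf{P}}^{a^{-1}}$ for those $a\in\{a_1,\dots,a_d\}$ with $a^{-1}\notin\Sigma$ (when $a^{-1}\in\Sigma$ already, condition (2) of Proposition \ref{prop:georgii} forces the compatibility and there is nothing to choose). Since $\mathbf{p}>\mathbf{0}$, the natural candidate is the \textbf{time-reversal} of $\mathbf{P}^a$ with respect to $\mathbf{p}$, namely
$$
\hat{\mathbf{P}}^{a^{-1}}_{k,\ell} = \frac{\mathbf{p}_\ell\,\mathbf{P}^a_{\ell,k}}{\mathbf{p}_k} \quad\text{for all }k,\ell\in\mathcal{A}.
$$
First I would check this is a well-defined stochastic matrix: non-negativity is clear, and $\sum_{\ell}\hat{\mathbf{P}}^{a^{-1}}_{k,\ell} = \mathbf{p}_k^{-1}\sum_\ell \mathbf{p}_\ell\mathbf{P}^a_{\ell,k} = \mathbf{p}_k^{-1}(\mathbf{p}\mathbf{P}^a)_k = \mathbf{p}_k^{-1}\mathbf{p}_k = 1$, using hypothesis (1) of Proposition \ref{prop:georgii} for $\mu$.

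Next I would verify the two invariance criteria of Proposition \ref{prop:georgii} for the pair $(\mathbf{p},\hat{\mathbf{P}})$ over the larger generating set $\Sigma^{\pm}$. Condition (2) — the detailed-balance-type identity $\mathbf{p}_k\hat{\mathbf{P}}^{a^{-1}}_{k,\ell} = \mathbf{p}_\ell\hat{\mathbf{P}}^a_{\ell,k}$ — is immediate from the definition of $\hat{\mathbf{P}}^{a^{-1}}$. For condition (1) I must show $\mathbf{p}\hat{\mathbf{P}}^{a^{-1}} = \mathbf{p}$: indeed $(\mathbf{p}\hat{\mathbf{P}}^{a^{-1}})_\ell = \sum_k \mathbf{p}_k\cdot \mathbf{p}_\ell\mathbf{P}^a_{\ell,k}/\mathbf{p}_k = \mathbf{p}_\ell\sum_k \mathbf{P}^a_{\ell,k} = \mathbf{p}_\ell$ since $\mathbf{P}^a$ is stochastic. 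Hence by Proposition \ref{prop:georgii} there is a unique $(\mathbf{p},\hat{\mathbf{P}})$-Markov measure $\bar{\mu}_{\F_d}\in\mathcal{M}(\mathcal{A}^{\F_d})$ and it is $\F_d$-invariant.

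It remains to show that $\bar{\mu}_{\F_d}$ is an $\freegroup_d$-extension of $\mu$, i.e. $\pi_*\bar{\mu}_{\F_d} = \mu$, where (via Remark \ref{symbolic_measure_extensions}) $\pi\colon\mathcal{A}^{\F_d}\to\mathcal{A}^S$ is restriction along $\iota\colon S\hookrightarrow\F_d$. It suffices to compare the two measures on cylinders $[x;Q]$ with $Q\subseteq S$ finite, and since any finite $Q\subseteq S$ is contained in some finite tree $T\subseteq S$ with root $\varepsilon$ (take a spanning subtree of $Q\cup\{\varepsilon\}$ inside $\mathrm{Cay}(S,\Sigma)$), it is enough to match the values on cylinders $[x;T]$ for such trees $T$. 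But $T$, viewed inside $\mathrm{Cay}(\F_d,\Sigma^{\pm})$, is still a finite tree rooted at $\varepsilon$ whose edges are all labelled by elements of $\Sigma\subseteq\Sigma^{\pm}$, so the defining formula for $\bar{\mu}_{\F_d}$ gives
$$
\bar{\mu}_{\F_d}([x;T]) = \mathbf{p}_{x(\varepsilon)}\prod_{(t,at)\in E(T)}\hat{\mathbf{P}}^a_{x(t),x(at)} = \mathbf{p}_{x(\varepsilon)}\prod_{(t,at)\in E(T)}\mathbf{P}^a_{x(t),x(at)} = \mu([x;T]),
$$
using $\hat{\mathbf{P}}^a = \mathbf{P}^a$ for $a\in\Sigma$. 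Since cylinders over finite trees rooted at $\varepsilon$ generate $\mathcal{B}(\mathcal{A}^S)$, a standard $\pi$-$\lambda$ argument yields $\pi_*\bar{\mu}_{\F_d} = \mu$, completing the proof.

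I expect the only genuinely delicate point to be the reduction in the last paragraph: one must be careful that an arbitrary finite $Q\subseteq S$ generates, together with $\varepsilon$ and the shortest paths to its elements, a finite \emph{tree} in $\mathrm{Cay}(S,\Sigma)$ — which holds because $\mathrm{Cay}(S,\Sigma)$ with $S=\langle\Sigma\rangle^+$ is connected and geodesics from $\varepsilon$ stay inside $S$ — and that this tree remains a tree with the same $\Sigma$-labelled edge set when re-embedded into $\mathrm{Cay}(\F_d,\Sigma^{\pm})$, so that the two Markov formulas literally agree term by term. Everything else is a routine check of the Proposition \ref{prop:georgii} criteria for the time-reversed transition matrices.
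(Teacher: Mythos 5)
Your proposal is correct and follows essentially the same route as the paper: the same time-reversal definition $\hat{\mathbf{P}}^{a^{-1}}_{k,\ell}=\mathbf{p}_\ell\mathbf{P}^a_{\ell,k}/\mathbf{p}_k$, the same verification of stochasticity and of conditions (1)--(2) of Proposition \ref{prop:georgii}, and the same comparison of the two Markov formulas on cylinders over finite trees. The only (harmless) divergence is in the final reduction: you enclose an arbitrary finite $Q\subseteq S$ in a tree rooted at $\varepsilon$ directly, whereas the paper reduces to root $\varepsilon$ by invoking $S$-invariance of both measures; both work.
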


\begin{proof}
For $a \in \Sigma$, let $\hat{\mathbf{P}}^a = \mathbf{P}^a$, and for $a \notin \Sigma$, let $\hat{\mathbf{P}}^a$ be defined coordinate-wise as
$$
\hat{\mathbf{P}}^a_{k,\ell} = \frac{\mathbf{p}_\ell}{\mathbf{p}_k}\mathbf{P}^{a^{-1}}_{\ell,k} \quad \text{ for } k,\ell \in \mathcal{A}.
$$

First, let's check that $(\mathbf{p},\hat{\mathbf{P}})$ is a Markov $\Sigma^\pm$-tree chain. For every $a\not\in \Sigma$, we have $a^{-1}\in \Sigma$ and $\mathbf{p}\mathbf{P}^{a^{-1}}=\mathbf{p}$. Thus, for all $k\in \mathcal{A}$,
$$\sum_{\ell\in \mathcal{A}}\hat{\mathbf{P}}^{a}_{k,\ell}=\sum_{\ell\in \mathcal{A}}\frac{\mathbf{p}_\ell}{\mathbf{p}_k}\mathbf{P}^{a^-1}_{\ell,k}=\frac{1}{\mathbf{p}_k}\sum_{\ell\in\mathcal{A}}\mathbf{p}_\ell\mathbf{P}^{a^-1}_{\ell,k}=\frac{1}{\mathbf{p}_k}\mathbf{p}_k=1,$$
showing that $\hat{\mathbf{P}}^{a}$ is stochastic. Let's check that $(\mathbf{p},\hat{\mathbf{P}})$ induces an $\F_d$-invariant Markov measure. Indeed,
\begin{enumerate}
\item if $a \in \Sigma$, then $\mathbf{p}\hat{\mathbf{P}}^a = \mathbf{p}\mathbf{P}^a = \mathbf{p}$ and, if  $a \notin \Sigma$, then $a^{-1} \in \Sigma$ and $\mathbf{p}\hat{\mathbf{P}}^a = \mathbf{p}\mathbf{P}^{a^{-1}} = \mathbf{p}$;
\item for every $a \in \F_d$, $\mathbf{p}_k\hat{\mathbf{P}}^{a^{-1}}_{k,\ell} = \mathbf{p}_\ell\hat{\mathbf{P}}^a_{\ell,k}$ for every $k,\ell \in \mathcal{A}$, by construction of $\hat{\mathbf{P}}$.
\end{enumerate}

Let $\bar{\mu}_{\F_d}$ be the $\F_d$-invariant $(\mathbf{p},\hat{\mathbf{P}})$-Markov measure on $\mathcal{A}^{\F_d}$. It remains to show that $\bar{\mu}_{\F_d}$ is an $\freegroup_d$-extension of $\mu$. To see this, it suffices to check that $\pi_*\bar{\mu}_{\F_d} = \mu$ on a cylinder set supported upon a finite tree $T \subseteq S$, where $\pi\colon \mathcal{A}^{\F_d}\rightarrow \mathcal{A}^S$ is the restriction map $x\mapsto x|_S$ (see Remark \ref{symbolic_measure_extensions}; precomposing with $\iota$ is the same as restricting to $S$). Notice that, due to $S$-invariance of $\pi_*\bar{\mu}_{\F_d}$ and $\mu$, we can assume that $T$ has root $\varepsilon$. Then, it is direct that $\pi_*\bar{\mu}_{\F_d} = \mu$, since both $\mu$ and $\bar{\mu}_{\F_d}$ assign the same measure to every cylinder $[x;T]$---as presented in Definition \ref{Markov_measure_def}---, because $\mathbf{P}^a$ and $\hat{\mathbf{P}}^a$ coincide for $a \in \Sigma$.
\end{proof}

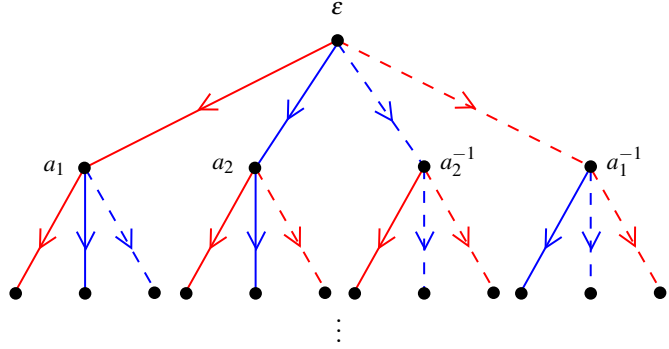
\begin{figure}[t]
    \centering

\tikzset{every picture/.style={line width=0.75pt}} 
\begin{tikzpicture}[x=0.75pt,y=0.75pt,yscale=-1,xscale=1]
\draw [color={rgb, 255:red, 255; green, 0; blue, 0 }  ,draw opacity=1 ] [dash pattern={on 4.5pt off 4.5pt}]  (333.67,133) -- (368.33,193.67) ;
\draw  [color={rgb, 255:red, 255; green, 0; blue, 0 }  ,draw opacity=1 ] (355.72,161.93) -- (355.63,171.7) -- (347.69,166.01) ;
\draw [color={rgb, 255:red, 255; green, 0; blue, 0 }  ,draw opacity=1 ] [dash pattern={on 4.5pt off 4.5pt}]  (289.67,68) -- (417.33,131.67) ;
\draw  [color={rgb, 255:red, 255; green, 0; blue, 0 }  ,draw opacity=1 ] (229.57,102.86) -- (219.81,102.81) -- (225.47,94.85) ;
\draw  [color={rgb, 255:red, 255; green, 0; blue, 0 }  ,draw opacity=1 ] (352.77,93.88) -- (358.55,101.75) -- (348.79,101.96) ;
\draw  [color={rgb, 255:red, 0; green, 0; blue, 255 }  ,draw opacity=1 ] (272.7,102.22) -- (264.57,107.62) -- (264.83,97.86) ;
\draw  [color={rgb, 255:red, 0; green, 0; blue, 255 }  ,draw opacity=1 ] (315.96,98.72) -- (317.1,108.42) -- (308.51,103.77) ;
\draw [color={rgb, 255:red, 0; green, 0; blue, 255 }  ,draw opacity=1 ]   (248.33,132.33) -- (290.67,68) ;
\draw [color={rgb, 255:red, 255; green, 0; blue, 0 }  ,draw opacity=1 ]   (161.33,132.67) -- (290.67,67.33) ;
\draw [color={rgb, 255:red, 0; green, 0; blue, 255 }  ,draw opacity=1 ] [dash pattern={on 4.5pt off 4.5pt}]  (289.67,69.33) -- (333.33,131.67) ;
\draw  [fill={rgb, 255:red, 0; green, 0; blue, 0 }  ,fill opacity=1 ] (287,68) .. controls (287,66.53) and (288.19,65.33) .. (289.67,65.33) .. controls (291.14,65.33) and (292.33,66.53) .. (292.33,68) .. controls (292.33,69.47) and (291.14,70.67) .. (289.67,70.67) .. controls (288.19,70.67) and (287,69.47) .. (287,68) -- cycle ;
\draw  [color={rgb, 255:red, 255; green, 0; blue, 0 }  ,draw opacity=1 ] (147.7,167.22) -- (139.57,172.62) -- (139.83,162.86) ;
\draw [color={rgb, 255:red, 0; green, 0; blue, 255 }  ,draw opacity=1 ] [dash pattern={on 4.5pt off 4.5pt}]  (162.67,133) -- (197.33,193.67) ;

\draw [color={rgb, 255:red, 255; green, 0; blue, 0 }  ,draw opacity=1 ]   (126.67,195.67) -- (162,132.33) ;
 
\draw  [fill={rgb, 255:red, 0; green, 0; blue, 0 }  ,fill opacity=1 ] (124.67,195.67) .. controls (124.67,194.19) and (125.86,193) .. (127.33,193) .. controls (128.81,193) and (130,194.19) .. (130,195.67) .. controls (130,197.14) and (128.81,198.33) .. (127.33,198.33) .. controls (125.86,198.33) and (124.67,197.14) .. (124.67,195.67) -- cycle ;
 
\draw  [fill={rgb, 255:red, 0; green, 0; blue, 0 }  ,fill opacity=1 ] (194.67,195.33) .. controls (194.67,193.86) and (195.86,192.67) .. (197.33,192.67) .. controls (198.81,192.67) and (200,193.86) .. (200,195.33) .. controls (200,196.81) and (198.81,198) .. (197.33,198) .. controls (195.86,198) and (194.67,196.81) .. (194.67,195.33) -- cycle ;
\draw  [color={rgb, 255:red, 0; green, 0; blue, 255 }  ,draw opacity=1 ] (185.72,162.93) -- (185.63,172.7) -- (177.69,167.01) ;
 
\draw [color={rgb, 255:red, 0; green, 0; blue, 255 }  ,draw opacity=1 ]   (162.33,133) -- (162.33,195.67) ;
 
\draw  [fill={rgb, 255:red, 0; green, 0; blue, 0 }  ,fill opacity=1 ] (159.67,195.67) .. controls (159.67,194.19) and (160.86,193) .. (162.33,193) .. controls (163.81,193) and (165,194.19) .. (165,195.67) .. controls (165,197.14) and (163.81,198.33) .. (162.33,198.33) .. controls (160.86,198.33) and (159.67,197.14) .. (159.67,195.67) -- cycle ;
\draw  [color={rgb, 255:red, 0; green, 0; blue, 255 }  ,draw opacity=1 ] (167.15,164.48) -- (162.68,173.17) -- (158.15,164.52) ;
 
\draw  [fill={rgb, 255:red, 0; green, 0; blue, 0 }  ,fill opacity=1 ] (159.33,132.33) .. controls (159.33,130.86) and (160.53,129.67) .. (162,129.67) .. controls (163.47,129.67) and (164.67,130.86) .. (164.67,132.33) .. controls (164.67,133.81) and (163.47,135) .. (162,135) .. controls (160.53,135) and (159.33,133.81) .. (159.33,132.33) -- cycle ;
\draw  [color={rgb, 255:red, 255; green, 0; blue, 0 }  ,draw opacity=1 ] (233.7,167.22) -- (225.57,172.62) -- (225.83,162.86) ;
  
\draw [color={rgb, 255:red, 255; green, 0; blue, 0 }  ,draw opacity=1 ] [dash pattern={on 4.5pt off 4.5pt}]  (248.67,133) -- (283.33,193.67) ;
 
\draw [color={rgb, 255:red, 255; green, 0; blue, 0 }  ,draw opacity=1 ]   (212.67,195.67) -- (248,132.33) ;
 
\draw  [fill={rgb, 255:red, 0; green, 0; blue, 0 }  ,fill opacity=1 ] (210.67,195.67) .. controls (210.67,194.19) and (211.86,193) .. (213.33,193) .. controls (214.81,193) and (216,194.19) .. (216,195.67) .. controls (216,197.14) and (214.81,198.33) .. (213.33,198.33) .. controls (211.86,198.33) and (210.67,197.14) .. (210.67,195.67) -- cycle ;
 
\draw  [fill={rgb, 255:red, 0; green, 0; blue, 0 }  ,fill opacity=1 ] (280.67,195.33) .. controls (280.67,193.86) and (281.86,192.67) .. (283.33,192.67) .. controls (284.81,192.67) and (286,193.86) .. (286,195.33) .. controls (286,196.81) and (284.81,198) .. (283.33,198) .. controls (281.86,198) and (280.67,196.81) .. (280.67,195.33) -- cycle ;
\draw  [color={rgb, 255:red, 255; green, 0; blue, 0 }  ,draw opacity=1 ] (270.72,161.93) -- (270.63,171.7) -- (262.69,166.01) ;
 
\draw [color={rgb, 255:red, 0; green, 0; blue, 255 }  ,draw opacity=1 ]   (248.33,133) -- (248.33,195.67) ;
 
\draw  [fill={rgb, 255:red, 0; green, 0; blue, 0 }  ,fill opacity=1 ] (245.67,195.67) .. controls (245.67,194.19) and (246.86,193) .. (248.33,193) .. controls (249.81,193) and (251,194.19) .. (251,195.67) .. controls (251,197.14) and (249.81,198.33) .. (248.33,198.33) .. controls (246.86,198.33) and (245.67,197.14) .. (245.67,195.67) -- cycle ;
\draw  [color={rgb, 255:red, 0; green, 0; blue, 255 }  ,draw opacity=1 ] (253.15,164.48) -- (248.68,173.17) -- (244.15,164.52) ;
 
\draw  [fill={rgb, 255:red, 0; green, 0; blue, 0 }  ,fill opacity=1 ] (245.33,132.33) .. controls (245.33,130.86) and (246.53,129.67) .. (248,129.67) .. controls (249.47,129.67) and (250.67,130.86) .. (250.67,132.33) .. controls (250.67,133.81) and (249.47,135) .. (248,135) .. controls (246.53,135) and (245.33,133.81) .. (245.33,132.33) -- cycle ;
\draw  [color={rgb, 255:red, 255; green, 0; blue, 0 }  ,draw opacity=1 ] (318.7,167.22) -- (310.57,172.62) -- (310.83,162.86) ;
 
\draw [color={rgb, 255:red, 255; green, 0; blue, 0 }  ,draw opacity=1 ]   (297.67,195.67) -- (333,132.33) ;
 
\draw  [fill={rgb, 255:red, 0; green, 0; blue, 0 }  ,fill opacity=1 ] (295.67,195.67) .. controls (295.67,194.19) and (296.86,193) .. (298.33,193) .. controls (299.81,193) and (301,194.19) .. (301,195.67) .. controls (301,197.14) and (299.81,198.33) .. (298.33,198.33) .. controls (296.86,198.33) and (295.67,197.14) .. (295.67,195.67) -- cycle ;
 
\draw  [fill={rgb, 255:red, 0; green, 0; blue, 0 }  ,fill opacity=1 ] (365.67,195.33) .. controls (365.67,193.86) and (366.86,192.67) .. (368.33,192.67) .. controls (369.81,192.67) and (371,193.86) .. (371,195.33) .. controls (371,196.81) and (369.81,198) .. (368.33,198) .. controls (366.86,198) and (365.67,196.81) .. (365.67,195.33) -- cycle ;
  
\draw [color={rgb, 255:red, 0; green, 0; blue, 255 }  ,draw opacity=1 ] [dash pattern={on 4.5pt off 4.5pt}]  (333.33,132) -- (333.33,194.67) ;
 
\draw  [fill={rgb, 255:red, 0; green, 0; blue, 0 }  ,fill opacity=1 ] (330.67,195.67) .. controls (330.67,194.19) and (331.86,193) .. (333.33,193) .. controls (334.81,193) and (336,194.19) .. (336,195.67) .. controls (336,197.14) and (334.81,198.33) .. (333.33,198.33) .. controls (331.86,198.33) and (330.67,197.14) .. (330.67,195.67) -- cycle ;
\draw  [color={rgb, 255:red, 0; green, 0; blue, 255 }  ,draw opacity=1 ] (338.15,164.48) -- (333.68,173.17) -- (329.15,164.52) ;
 
\draw  [fill={rgb, 255:red, 0; green, 0; blue, 0 }  ,fill opacity=1 ] (330.67,131.67) .. controls (330.67,130.19) and (331.86,129) .. (333.33,129) .. controls (334.81,129) and (336,130.19) .. (336,131.67) .. controls (336,133.14) and (334.81,134.33) .. (333.33,134.33) .. controls (331.86,134.33) and (330.67,133.14) .. (330.67,131.67) -- cycle ;
\draw  [color={rgb, 255:red, 0; green, 0; blue, 255 }  ,draw opacity=1 ] (402.7,167.22) -- (394.57,172.62) -- (394.83,162.86) ;
 
\draw [color={rgb, 255:red, 255; green, 0; blue, 0 }  ,draw opacity=1 ] [dash pattern={on 4.5pt off 4.5pt}]  (417.67,133) -- (452.33,193.67) ;
 
\draw [color={rgb, 255:red, 0; green, 0; blue, 255 }  ,draw opacity=1 ]   (381.67,195.67) -- (417,132.33) ;
 
\draw  [fill={rgb, 255:red, 0; green, 0; blue, 0 }  ,fill opacity=1 ] (379.67,195.67) .. controls (379.67,194.19) and (380.86,193) .. (382.33,193) .. controls (383.81,193) and (385,194.19) .. (385,195.67) .. controls (385,197.14) and (383.81,198.33) .. (382.33,198.33) .. controls (380.86,198.33) and (379.67,197.14) .. (379.67,195.67) -- cycle ;
  
\draw  [fill={rgb, 255:red, 0; green, 0; blue, 0 }  ,fill opacity=1 ] (449.67,195.33) .. controls (449.67,193.86) and (450.86,192.67) .. (452.33,192.67) .. controls (453.81,192.67) and (455,193.86) .. (455,195.33) .. controls (455,196.81) and (453.81,198) .. (452.33,198) .. controls (450.86,198) and (449.67,196.81) .. (449.67,195.33) -- cycle ;
\draw  [color={rgb, 255:red, 255; green, 0; blue, 0 }  ,draw opacity=1 ] (439.72,161.93) -- (439.63,171.7) -- (431.69,166.01) ;
 
\draw [color={rgb, 255:red, 0; green, 0; blue, 255 }  ,draw opacity=1 ] [dash pattern={on 4.5pt off 4.5pt}]  (417.33,133) -- (417.33,195.67) ;
 
\draw  [fill={rgb, 255:red, 0; green, 0; blue, 0 }  ,fill opacity=1 ] (414.67,195.67) .. controls (414.67,194.19) and (415.86,193) .. (417.33,193) .. controls (418.81,193) and (420,194.19) .. (420,195.67) .. controls (420,197.14) and (418.81,198.33) .. (417.33,198.33) .. controls (415.86,198.33) and (414.67,197.14) .. (414.67,195.67) -- cycle ;
\draw  [color={rgb, 255:red, 0; green, 0; blue, 255 }  ,draw opacity=1 ] (422.15,164.48) -- (417.68,173.17) -- (413.15,164.52) ;
 
\draw  [fill={rgb, 255:red, 0; green, 0; blue, 0 }  ,fill opacity=1 ] (414.67,131.67) .. controls (414.67,130.19) and (415.86,129) .. (417.33,129) .. controls (418.81,129) and (420,130.19) .. (420,131.67) .. controls (420,133.14) and (418.81,134.33) .. (417.33,134.33) .. controls (415.86,134.33) and (414.67,133.14) .. (414.67,131.67) -- cycle ;

\draw (292.48,206.01) node [anchor=north west][inner sep=0.75pt]  [rotate=-89.9] [align=left] {$\displaystyle \dotsc $};
 
\draw (285,47) node [anchor=north west][inner sep=0.75pt]  [font=\small] [align=left] {$\displaystyle \varepsilon$};
 
\draw (140,127) node [anchor=north west][inner sep=0.75pt]  [font=\small] [align=left] {$\displaystyle a_{1}$};
 
\draw (225,126) node [anchor=north west][inner sep=0.75pt]  [font=\small] [align=left] {$\displaystyle a_{2}$};
 
\draw (424,120) node [anchor=north west][inner sep=0.75pt]  [font=\small] [align=left] {$\displaystyle a_{1}^{-1}$};

\draw (340,120) node [anchor=north west][inner sep=0.75pt]  [font=\small] [align=left] {$\displaystyle a_{2}^{-1}$};

\end{tikzpicture}

    \caption{The Cayley graph of $\F_2$ for the definition of a Markov measure on $\mathcal{A}^{\F_2}$. Dashed lines represent inverses of the generators of $\F_2$.}
\end{figure}

\begin{remark}
    A feature worth highlighting from the last proof is that, if $a\in \Sigma$, and $\mathbf{P}^{a}$ is a real $|\mathcal{A}|\times |\mathcal{A}|$ stochastic matrix with left eigenvector $\mathbf{p}$, then the matrix $\mathbf{P}^{a^{-1}}$ defined by
    $$
\mathbf{P}^{a^{-1}}_{k,\ell} = \frac{\mathbf{p}_\ell}{\mathbf{p}_k}\mathbf{P}^{a}_{\ell,k} \quad \text{ for } k,\ell \in \mathcal{A}.
$$
is also a real $|\mathcal{A}|\times |\mathcal{A}|$ stochastic matrix with left eigenvector $\mathbf{p}$.
\end{remark}

We want to use the fact that all $S$-invariant Markov measures are $\freegroup_d$-extensible to show that every $S$-invariant measure is $\freegroup_d$-extensible. To do this, we consider appropriate Markovizations and recodings of these measures.

Given alphabets $\mathcal{A}, \mathcal{B}$, and $S$-subshifts $X \subseteq \mathcal{A}^S$ and $Y \subseteq \mathcal{B}^S$, we say that $\varphi: X \to Y$ is a {\bf sliding block code} if there exists a finite subset $F \subseteq S$ and a map $\Phi: \mathcal{A}^F \to \mathcal{B}$ such that $\varphi(x)(s) = \Phi((s \cdot x)|_F)$ for every $x \in X$ and $s \in S$. A sliding block code is always continuous and $S$-equivariant and, due to a straightforward generalization of Curtis-Hedlund-Lyndon theorem, these two properties characterize them. Given a finite subset $F \subseteq S$, the {\bf higher $F$-block code}  will be the particular sliding block code $\phi_F: \mathcal{A}^F \to (\mathcal{A}^F)^S$ given by $\Phi = \mathrm{Id}_{\mathcal{A}^F}$. It is direct to check that $\phi_F$ is injective and a conjugacy between $\mathcal{A}^S$ and $\phi_F(\mathcal{A}^S)$. In particular, $\phi_F(\mathcal{A}^S)$ is an $S$-subshift.

\begin{proposition}\label{prop:all_measures_are_extensible}
Every measure $\mu\in \mathcal{M}_{S}(\mathcal{A}^{S})$ is $\freegroup_d$-extensible.
\end{proposition}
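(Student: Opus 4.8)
The plan is to realize an arbitrary $\mu\in\mathcal{M}_S(\mathcal{A}^S)$ as a weak-* limit of $\freegroup_d$-extensible measures; since $\Ext_{\freegroup_d}(\mathcal{A}^S,S)$ is weak-* closed by Proposition \ref{prop:ext_weak_closed}, this is enough. The approximants $\mu_r$ will be built by a ``Markovization after higher-block recoding'' procedure, chosen so that the two tools already available apply cleanly: Proposition \ref{propExtFd} ($S$-invariant Markov measures are $\freegroup_d$-extensible) and Proposition \ref{weakfactors} (extensibility passes to p.m.p.\ factors).

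Concretely, for each $r\ge 1$ I would take the higher $B_r$-block code $\phi_r\colon \mathcal{A}^S\to(\mathcal{A}^{B_r})^S$, a conjugacy onto an $S$-subshift, and set $\nu_r=(\phi_r)_*\mu$. After passing to the subalphabet $\mathcal{B}_r=\{p\in\mathcal{A}^{B_r}:\nu_r([p;\{\varepsilon\}])>0\}$ --- which carries $\nu_r$, by $S$-invariance and countability of $S$, and on which the one-site marginal is strictly positive --- I would define a Markov $\Sigma$-tree chain over $\mathcal{B}_r$ by $\mathbf{p}_p=\nu_r([p;\{\varepsilon\}])$ and $\mathbf{P}^a_{p,q}=\nu_r(\{w:w(\varepsilon)=p,\ w(a)=q\})/\mathbf{p}_p$. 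Using only the $S$-invariance of $\nu_r$, the two conditions of Proposition \ref{prop:georgii} hold: condition (1) is the equality of the one-site marginals of $\nu_r$ at $\varepsilon$ and at $a$, and condition (2) follows by applying $a^{-1}$-invariance to the $\{\varepsilon,a\}$-marginal. Hence the induced Markov measure $M_r$ on $(\mathcal{B}_r)^S$ is $S$-invariant, and by Proposition \ref{propExtFd} applied over the alphabet $\mathcal{B}_r$ it is $\freegroup_d$-extensible. Composing with the $1$-block code $\psi_r\colon(\mathcal{B}_r)^S\to\mathcal{A}^S$, $\psi_r(y)(s)=y(s)(\varepsilon)$, I set $\mu_r=(\psi_r)_*M_r\in\mathcal{M}_S(\mathcal{A}^S)$; since $S\acts(\mathcal{A}^S,\mu_r)$ is a p.m.p.\ factor of $S\acts((\mathcal{B}_r)^S,M_r)$ via $\psi_r$, Proposition \ref{weakfactors} yields $\mu_r\in\Ext_{\freegroup_d}(\mathcal{A}^S,S)$.

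The remaining --- and main --- task is to show $\mu_r\to\mu$ weak-*. Since $S=\langle\Sigma\rangle^+$ and $\Sigma\subseteq\Sigma^{\pm}$, the graph $\mathrm{Cay}(S,\Sigma)$ is a connected subgraph of the tree $\mathrm{Cay}(\F_d,\Sigma^{\pm})$, hence itself a tree; therefore every finite $Q\subseteq S$ is contained in a finite subtree $T$ rooted at $\varepsilon$ with $T\subseteq B_R$, $R=\max_{q\in Q}d(\varepsilon,q)$. Thus it suffices to prove the exact identity $\mu_r([z;T])=\mu([z;T])$ for every finite subtree $T\subseteq B_r$ rooted at $\varepsilon$ and every $z\in\mathcal{A}^T$, which forces $\mu_r=\mu$ on all cylinders once $r$ is large. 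I would prove this identity by peeling the leaves of $T$ one at a time, farthest from $\varepsilon$ first, maintaining an invariant in which the partial sum $M_r(\psi_r^{-1}[z;T])$ is written as a sum of Markov weights $\mathbf{p}_{w(\varepsilon)}\prod_{\text{edges}}\mathbf{P}$ over the current subtree, cut down by indicators $\mathbf{1}[w(s)(g_{s,u})=z(u)]$ ranging over pairs $(s,u)$ with $u\in\{s\}\cup(\text{already-removed descendants of }s)$, where $g_{s,u}\in S$ is the element of $S$ that the $T$-path from $s$ to $u$ spells. Removing a leaf $t^{*}=at$ (with $a\in\Sigma$ and parent $t$) amounts to summing $\mathbf{P}^a_{w(t),\,\cdot\,}$ over the admissible values of $w(t^{*})$; translating through $\nu_r=(\phi_r)_*\mu$, this equals $\mathbf{p}_{w(t)}^{-1}\,\mu\big(\{x:x|_{B_r}=w(t)\}\cap\bigcap_u\{x:x(g_{t,u})=z(u)\}\big)$, and the key point is that each $g_{t,u}$ has length $d(t,u)=d(\varepsilon,u)-d(\varepsilon,t)\le r$, so $g_{t,u}\in B_r$ and $x(g_{t,u})$ is already determined by $x|_{B_r}=w(t)$. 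Hence the quotient collapses to $\prod_u\mathbf{1}[w(t)(g_{t,u})=z(u)]$, which is exactly the updated invariant; once the subtree is reduced to $\{\varepsilon\}$ the leftover sum is $\sum_{p\in\mathcal{B}_r,\,p|_T=z}\mathbf{p}_p=\mu([z;T])$. With the identity in hand, $\mu_r\to\mu$ and $\mu\in\Ext_{\freegroup_d}(\mathcal{A}^S,S)$.

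The step I expect to be the real obstacle is this telescoping computation: choosing the leaf-peeling invariant so that the constraint a removed branch imposes on its parent symbol is book-kept correctly, and verifying the geometric fact that all relevant sites stay inside the recoding window --- which rests on $\mathrm{Cay}(S,\Sigma)$ being a tree and on keeping the left/right conventions straight (edges $\{t,at\}$, windows $B_r t=\{gt:g\in B_r\}$). Everything else --- invariance of the Markovization, extensibility of $M_r$, and the factor argument transferring extensibility from $M_r$ to $\mu_r$ --- is routine given the results already established.
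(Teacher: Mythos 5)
Your proposal is correct and follows essentially the same route as the paper: higher $B_r$-block recoding, Markovization of the $B_r$-marginals of $\mu$ (with $S$-invariance checked via Proposition \ref{prop:georgii}), $\freegroup_d$-extensibility of the resulting Markov measure by Proposition \ref{propExtFd}, transfer of extensibility along an equivariant measure-preserving map by Proposition \ref{weakfactors}, exact agreement with $\mu$ on cylinders supported inside the recoding window, and the weak-* closedness of $\Ext_{\freegroup_d}(\mathcal{A}^S,S)$ from Proposition \ref{prop:ext_weak_closed}. The only difference is cosmetic: the paper proves $\supp(\nu_m)\subseteq\phi_m(\mathcal{A}^S)$ and pulls the Markov measure back through the conjugacy $\phi_m^{-1}$, whereas you push forward through the one-block map $\psi_r$ and recover the same cylinder identity $\mu_r([z;T])=\mu([z;T])$ by your leaf-peeling computation, which does go through since every $g_{t,u}$ has length $d(t,u)\le r$ and hence lies in $B_r$.
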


\begin{proof}

Fix $\mu\in \mathcal{M}_{S}(\mathcal{A}^{S})$ and let $X=\text{supp}(\mu)$, which is an $S$-subshift of $\mathcal{A}^S$. For each $m \geq 0$, we abbreviate by $\phi_m$ the higher $B_m$-block code $\phi_{B_m}: \mathcal{A}^S \to (\mathcal{A}^{B_m})^S$. Fix $m$, define $\left.X\right\vert_{B_m}=\{x\rvert_{B_m}:x\in X\}$, which is a finite set and will play the role of an alphabet. Set for each $\alpha,\beta\in \left.X\right\vert_{B_m}$ and $a \in \Sigma$,
    $$
    \mathbf{p}_\alpha=\mu([\alpha\,;B_{m}]) \quad \text{ and } \quad \mathbf{P}^{a}_{\alpha,\beta}=
        \dfrac{\mu\big{(}[\alpha\,;B_{m}]\cap a^{-1}[\beta\,;B_{m}]\big{)}}{\mathbf{p}_\alpha}.
        $$
Notice that $\mathbf{p}_\alpha > 0$, since $[\alpha\,;B_{m}] \cap \text{supp}(\mu) \neq \emptyset$. It is clear that $\mathbf{p}$ is a probability vector, and for each $\alpha\in \left.X\right\vert_{B_m}$,
    $$\sum_{\beta\in \left.X\right\vert_{B_m}}\mathbf{P}^{a}_{\alpha,\beta}=\frac{1}{\mathbf{p}_\alpha}\sum_{\beta\in \left.X\right\vert_{B_m}}\mu\big{(}[\alpha\,;B_{m}]\cap a^{-1}[\beta\,;B_{m}]\big{)}=1,$$
    since the sets $\big{\{}a^{-1}[\beta;B_{m}]:\beta\in \left.X\right\vert_{B_m}\big{\}}$ form a partition of $(\left.X\right\vert_{B_m})^{S}$. Thus, $\mathbf{P}^{a}$ is a stochastic matrix for all $a \in \Sigma$. For each $\beta\in \left.X\right\vert_{B_m}$, we get
    $$\sum_{\alpha\in \left.X\right\vert_{B_m}}\mathbf{p}_\alpha \mathbf{P}^{a}_{\alpha,\beta}=\sum_{\alpha\in \left.X\right\vert_{B_m}}\mu\big{(}[\alpha;B_{m}]\cap a^{-1}[\beta;B_{m}]\big{)}=\mu\big{(} a^{-1}[\beta\,;B_{m}]\big{)}=\mathbf{p}_\beta,$$
    so $\mathbf{p}\mathbf{P}^{a}=\mathbf{p}$. Finally, if $a,a^{-1} \in \Sigma$, then
    $$
    \mathbf{p}_\alpha\mathbf{P}^{a}_{\alpha,\beta} = \mu\big{(}[\alpha\,;B_{m}]\cap a^{-1}[\beta\,;B_{m}]\big{)} = \mu\big{(}a[\alpha\,;B_{m}]\cap [\beta\,;B_{m}]\big{)} = \mathbf{p}_\beta\mathbf{P}^{a^{-1}}_{\beta,\alpha},
    $$
    and, by Proposition \ref{prop:georgii}, we conclude $\mathbf{p}$ and $\{\mathbf{P}^{a}:a \in \Sigma\}$ define an $S$-invariant Markov measure $\nu_m$ on $(\left.X\right\vert_{B_m})^{S}$. 
    
    We want to see that $\supp(\nu_m)\subseteq \phi_m(\mathcal{A}^S)$. Notice that, if $y\in (\left.X\right\vert_{B_m})^{S}-\phi_m(\mathcal{A}^S)$, then there exist $s\in S$ and $a \in \Sigma$ such that $[y(s)\,;B_{m}]\cap a^{-1}[y(as)\,;B_{m}]=\varnothing$. Indeed, if we assume otherwise, for every $s\in S$ and $a \in \Sigma$, there is an $x\in [y(s);B_{m}]\cap a^{-1}[y(as);B_{m}]$, so for all $\ell<m$ and $t\in B_\ell$,
    $$y(s)(ta)=x(ta)=(a\cdot x)(t)=y(as)(t).$$ 
    Define $z\in \mathcal{A}^{S}$ by $z(s)=y(s)(\varepsilon)$. Applying iteratively the identity just proven implies that, for $s\in S$ and $t\in B_{m}$,
    $$(\varphi_m(z)(s))(t)=(s\cdot z)(t)=z(ts)=y(ts)(\varepsilon)=y(s)(t),$$
    so $\varphi_m(z)=y$, contradicting that $y\not\in\phi_m(\mathcal{A}^S)$. Hence, $[y(s)\,;B_{m}]\cap a^{-1}[y(as)\,;B_{m}]=\varnothing$ for some $s\in S$ and $a \in \Sigma$, which directly implies that $\nu_m([y\, ;\{s,as\}])=\mathbf{p}_{y(s)}\mathbf{P}^{a}_{y(s),y(as)}=0$, so there is an open set containing $y$ with null measure, i.e., $y\not\in\supp(\nu_m)$. Thus $\supp(\nu_m)\subseteq \phi_m(\mathcal{A}^S)$.

    Notice that the Markov measure $\nu_m \in \mathcal{M}_S((\left.X\right\vert_{B_m})^{S})$ is $\freegroup_d$-extensible (by Proposition \ref{propExtFd}) and trivially conjugate to a measure $\nu_m' \in \mathcal{M}_S(\phi_m(\mathcal{A}^S))$. Then, since $\freegroup_d$-extensibility is preserved under factors (by Proposition \ref{weakfactors}), the measure $\nu'_m$ is $\freegroup_d$-extensible. Let $\mu_m=(\phi_m^{-1})_*\nu'_m$, i.e., $\mu_m(A)=\nu'_m(\phi_m(A))$ for all $A\in \mathcal{B}(\mathcal{A}^{S})$. Since $\phi^{-1}_m: \phi_m(\mathcal{A}^S) \to \mathcal{A}^S$ is a conjugacy and the measure $\nu'_m$ is $\freegroup_d$-extensible, the measure $\mu_m$ is $\freegroup_d$-extensible (again by Proposition \ref{weakfactors}).

    We just need to prove that $\mu_m\to\mu$ in the weak-* topology. If $F\subseteq S$ is finite and $x\in \mathcal{A}^{F}$, there is some $m_0 \geq 0$ such that $F\subseteq B_{m}$ for every $m \geq m_0$, so 
    \begin{align*}
        \phi_m([x\,;F])=\phi_m\left(\bigsqcup_{y\in \mathcal{A}^{B_{m}-F}}[y\wedge x\,;B_{m}]\right)=\phi_m(\mathcal{A}^S) \cap\ \bigsqcup_{y\in \mathcal{A}^{B_{m}-F}}[y\wedge x\,;\varepsilon],
    \end{align*}
    whence
    \begin{align*}
        \mu_m([x\,;F])&=\overline{\nu}_m\left(\bigsqcup_{y\in \mathcal{A}^{B_{m}-F}}[y\wedge x\,;\varepsilon]\right)=\nu_m\left(\bigsqcup_{y\in \mathcal{A}^{B_{m}-F}: y\wedge x \in \left.X\right\vert_{B_m}}[y\wedge x\,;\varepsilon]\right)\\
        &=\sum_{y\in \mathcal{A}^{B_{m}-F}: y\wedge x \in \left.X\right\vert_{B_m}}\mathbf{p}_{y\wedge x}
        =\sum_{y\in \mathcal{A}^{B_{m}-F}}\mu([y\wedge x\,;B_{m}])=\mu([x\,;F]).
    \end{align*}
     As a result, $\mu_m([x;F])$ is eventually $\mu([x;F])$ as $m\to \infty$ for every cylinder, implying that $\mu_m\to\mu$ in the weak-* topology.

\end{proof}

We have the following result.

\fourththeorem*

\begin{proof}
By Theorem \ref{thm:bowen}, we know $\F_d$ has the \textsc{pa} property. By Proposition \ref{prop:all_measures_are_extensible}, we have that $\textup{Ext}_{\freegroup_d}(\mathcal{A}^S,S)=\mathcal{M}_S(\mathcal{A}^S)$ for every finite alphabet $\mathcal{A}$. We conclude that $S$ has the \textsc{pa}
 property by appealing to Theorem \ref{thm:B}.
\end{proof}

\subsection{Non-extensibility} 

Another consequence of the main result in \cite{bricenobustosdonoso1} is the following:
\begin{theorem}[\cite{bricenobustosdonoso1}]
    Let $\receivingSgroup$ be a receiving $S$-group, $\freeSgroup=(\Gamma,\gamma)$ be the free $S$-group, and assume that $\Gamma$ is residually finite. Then, $\receivingSgroup$ is the free $S$-group if and only if every surjective continuous $S$-action is topologically $\receivingSgroup$-extensible.
\end{theorem}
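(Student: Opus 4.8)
The forward implication is exactly the quoted theorem of \cite{bricenobustosdonoso1}: if $\receivingSgroup \simeq \freeSgroup$, then every surjective continuous $S$-action is topologically $\receivingSgroup$-extensible (this direction does not even need residual finiteness). So the plan is to prove the converse. Write $\receivingSgroup = (G,\eta)$ and let $\theta\colon \Gamma \to G$ be the morphism granted by the universal property of the free $S$-group, so that $\theta \circ \gamma = \eta$; as noted right after the definition of the free $S$-group, $\theta$ is automatically surjective because its image contains $\eta(S)$, which generates $G$. Since a bijective morphism of $S$-groups is an isomorphism of $S$-groups, it suffices to show that the extensibility hypothesis forces $\theta$ to be injective.

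I would argue by contraposition. Suppose $\theta$ is not injective and fix $g_0 \in \ker\theta$ with $g_0 \neq 1_\Gamma$. Because $\Gamma$ is residually finite, Proposition~\ref{prop:residually_finite_groups} provides a finite group $F$ and a group morphism $\rho\colon \Gamma \to F$ with $\rho(g_0) \neq 1_F$. I then build an explicit surjective continuous $S$-action witnessing non-extensibility: let $S$ act on the finite discrete space $F$ by $s \cdot f = \rho(\gamma(s))\,f$. Using that $\gamma$ and $\rho$ are morphisms, this is a genuine continuous $S$-action, and since each $\alpha_s$ is left translation by an element of the group $F$, it is a bijection; in particular the action $S \acts F$ is surjective.

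The crux is to show that the topological natural $\receivingSgroup$-extension of $S \acts F$ is empty, so that $\pi\colon F_{\receivingSgroup} \to F$ cannot be surjective. Suppose $(x_h)_{h \in G} \in F_{\receivingSgroup}$, i.e.\ $\rho(\gamma(s))\,x_h = x_{\eta(s)h}$ for all $s \in S$ and $h \in G$. A straightforward induction on word length in the generators $\{\gamma(s)^{\pm 1} : s \in S\}$ of $\Gamma$ establishes the transport identity
$$
x_{\theta(v)\,h} = \rho(v)\,x_h \qquad \text{for all } v \in \Gamma,\ h \in G,
$$
where the inverse-generator step uses $x_{\eta(s)^{-1}h} = \rho(\gamma(s))^{-1} x_h$, itself a rearrangement of the defining relation. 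Taking $v = g_0$ and $h = 1_G$, and recalling $\theta(g_0) = 1_G$, this gives $x_{1_G} = \rho(g_0)\,x_{1_G}$; since $x_{1_G} \in F$ is invertible, $\rho(g_0) = 1_F$, contradicting the choice of $\rho$. Hence $F_{\receivingSgroup} = \varnothing$, so $S \acts F$ is surjective but not topologically $\receivingSgroup$-extensible. This contradicts the hypothesis, proving the contrapositive; therefore $\theta$ is injective, and combined with surjectivity it is an isomorphism of $S$-groups, i.e.\ $\receivingSgroup \simeq \freeSgroup$.

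The main obstacle is the design of the witnessing action: one must convert an element of $\ker\theta$ into a concrete dynamical obstruction, and residual finiteness of $\Gamma$ is precisely what makes this possible, by realizing the separation of $g_0$ from $1_\Gamma$ inside a \emph{finite} group $F$ on which $S$ can act through $\rho \circ \gamma$. Once the finite left-translation action is in place, the emptiness of $F_{\receivingSgroup}$ reduces to the transport identity, whose only delicate point is the consistent treatment of inverse generators; this is exactly where the passage from the semigroup $S$ to its free $S$-group $\Gamma$, together with the fact that $\Gamma$ is generated by $\gamma(S)$, is used.
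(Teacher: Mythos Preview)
The paper does not give its own proof of this theorem; it is stated as a consequence of the main result in \cite{bricenobustosdonoso1} and cited from there. What the paper does provide, a few lines later, is a sketch of the mechanism behind the converse: when $\receivingSgroup\not\simeq\freeSgroup$ and $\Gamma$ is residually finite, one can find a finite alphabet $\mathcal{A}$ and an $S$-periodic point $x\in\mathcal{A}^S$ such that the finite orbit $S\acts Sx$ is not even partially $\receivingSgroup$-extensible.

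Your argument is correct and is essentially this same mechanism, written in a slightly more abstract form. Instead of realizing the obstruction as a periodic orbit inside a full shift, you go directly to the finite group $F$ furnished by residual finiteness and let $S$ act by left translation via $\rho\circ\gamma$. The transport identity $x_{\theta(v)h}=\rho(v)x_h$ is exactly the computation that forces $F_{\receivingSgroup}=\varnothing$; your treatment of the inverse-generator step is the only point needing care, and you handle it correctly. One cosmetic remark: you do not need $\rho$ to be surjective onto $F$ (and it need not be), since left translation by any element of $F$ is a bijection, so surjectivity of the $S$-action is automatic. If you wanted to match the paper's phrasing literally, you could replace $F$ by the image $\rho(\Gamma)$ or by a single orbit, thereby obtaining a transitive finite action that is visibly a periodic orbit; but this is not needed for the argument.
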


We want to establish a measure-theoretical analog of this theorem. Observe that in the case where $S$ is left amenable this is trivial, as there is only one receiving $S$-group up to isomorphism of $S$-groups, namely, the group of right fractions $\GrRightFrac{S}$, and we have already shown that every $S$-invariant measure is $\GrRightFrac{S}$-extensible. We focus on the free case. Let us consider the following proposition.

\begin{proposition}
\label{prop:support_extension}
    Let $S\acts(X,\mu)$ be a p.m.p. action, and let $\receivingSgroup$ be a receiving $S$-group. Then, for every $\bar{\mu}\in \mathcal{M}_G(X_\receivingSgroup)$ such that $\pi_*\bar{\mu}=\mu$, we have that $\supp(\bar{\mu})\subseteq (\supp(\mu))_\receivingSgroup$. In particular, if $S \acts \supp(\mu)$ is not partially $\receivingSgroup$-extensible, then $\mu$ cannot be $\receivingSgroup$-extensible.
\end{proposition}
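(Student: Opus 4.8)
The plan is to exploit the fact that a $G$-invariant measure $\bar{\mu}$ on $X_\receivingSgroup$ is pushed onto $\mu$ not only by the distinguished projection $\pi=\pi_{1_G}$ but by \emph{every} coordinate projection $\pi_h\colon X_\receivingSgroup\to X$, $(x_k)_{k\in G}\mapsto x_h$. Indeed, with the paper's convention for the shift action one has $\pi\circ\sigma_h=\pi_h$ for each $h\in G$ (apply the relevant projection to $\sigma_h(x_k)_k=(x_{kh})_k$), so the $G$-invariance of $\bar{\mu}$ gives
\[
(\pi_h)_*\bar{\mu}=\pi_*(\sigma_h)_*\bar{\mu}=\pi_*\bar{\mu}=\mu\qquad\text{for every }h\in G.
\]

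Next, I would write $N=X\setminus\supp(\mu)$, which is open and satisfies $\mu(N)=0$. By the previous step $\bar{\mu}(\pi_h^{-1}(N))=\mu(N)=0$ for all $h$, and since $G$ is countable, $\bar{\mu}\bigl(\bigcup_{h\in G}\pi_h^{-1}(N)\bigr)=0$. Hence the set
\[
Z=\bigl\{(x_h)_{h\in G}\in X_\receivingSgroup : x_h\in\supp(\mu)\text{ for all }h\in G\bigr\}
\]
has full $\bar{\mu}$-measure. I would then observe that, since $\supp(\mu)$ is closed and $S$-invariant (Remark \ref{rem:supp-inv}), the restricted action $S\acts\supp(\mu)$ is well-defined and $Z$ is exactly the underlying space $(\supp(\mu))_\receivingSgroup$ of its topological natural $\receivingSgroup$-extension: the defining relations $s\cdot x_h=x_{\eta(s)h}$ are inherited from $X_\receivingSgroup$, and demanding that all coordinates lie in $\supp(\mu)$ is precisely the condition $(x_h)_{h}\in(\supp(\mu))^G$. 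In particular $Z=(\supp(\mu))^G\cap X_\receivingSgroup$ is closed in $X_\receivingSgroup$, so from $\bar{\mu}(Z)=1$ we conclude $\supp(\bar{\mu})\subseteq Z=(\supp(\mu))_\receivingSgroup$.

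For the final assertion, suppose $\mu$ is $\receivingSgroup$-extensible and pick $\bar{\mu}\in\mathcal{M}_G(X_\receivingSgroup)$ with $\pi_*\bar{\mu}=\mu$. As $\bar{\mu}$ is a Borel probability measure, $\supp(\bar{\mu})\neq\varnothing$, and by the inclusion just proved $(\supp(\mu))_\receivingSgroup\neq\varnothing$; that is, $S\acts\supp(\mu)$ is topologically partially $\receivingSgroup$-extensible. Taking the contrapositive yields the claim. I do not expect a genuine obstacle here: the only points needing care are the verification that each $(\pi_h)_*\bar{\mu}$ equals $\mu$---a one-line consequence of $G$-invariance once the shift convention is pinned down---and the set-theoretic identification $Z=(\supp(\mu))_\receivingSgroup$, which is immediate from the definitions together with the closedness and $S$-invariance of $\supp(\mu)$.
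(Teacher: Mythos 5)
Your proof is correct and follows essentially the same route as the paper: both arguments rest on the observation that $G$-invariance of $\bar{\mu}$ makes every coordinate projection push $\bar{\mu}$ onto $\mu$, and both conclude via the identification $(\supp(\mu))_{\receivingSgroup}=(\supp(\mu))^G\cap X_{\receivingSgroup}$ and the same contrapositive for the final assertion. The only cosmetic difference is that the paper argues pointwise, showing each coordinate of a point of $\supp(\bar{\mu})$ lies in $\supp(\mu)$ by testing open neighborhoods (so it never invokes countability of $G$), whereas you pass to the closed full-$\bar{\mu}$-measure set $Z$ via a countable union of null sets.
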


\begin{proof} 
    Note that if $\bar{x}=(x_h)_{h\in G}\in \supp(\bar{\mu})\subseteq X_{\receivingSgroup}$ and $U_h\subseteq X$ is an open neighborhood of $x_h$ for $h\in G$, then $h^{-1}\pi^{-1}U_h$ is an open neighborhood of $\bar{x}$, which means
    $$\mu(U_h)=\bar{\mu}(\pi^{-1}U_h)=\bar{\mu}(h^{-1}\pi^{-1}U_h)>0,$$
    so $x_h\in \supp(\mu)$ for all $h\in G$. Hence $\supp(\bar{\mu})\subseteq X_{\receivingSgroup}\cap (\supp(\mu))^G= (\supp(\mu))_{\receivingSgroup}$. Finally, if $S \acts \supp(\mu)$ is not partially $\receivingSgroup$-extensible and $\bar{\mu}\in \mathcal{M}_G(X_\receivingSgroup)$ is such that $\pi_*\bar{\mu}=\mu$, then $(\supp(\mu))_{\receivingSgroup}=\varnothing$, so $\supp(\bar{\mu})=\varnothing$, which is absurd.
\end{proof}

So far, we have not provided an example of a receiving $S$-group $\receivingSgroup$ and an $S$-invariant measure which is not $\receivingSgroup$-extensible. In view of Proposition \ref{prop:support_extension}, taking a continuous action $S\acts X$ that is not partially $\receivingSgroup$-extensible and admits an $S$-invariant measure would provide an example of this. This can be done, as for every $S$ such that the free $S$-group $\freeSgroup=(\Gamma,\gamma)$ satisfies that $\Gamma$ is residually finite, if $\receivingSgroup\not\simeq \freeSgroup$, there exist a finite alphabet $\mathcal{A}$ and an $S$-periodic point $x\in \mathcal{A}^S$ such that $S\acts Sx$ is not partially $\receivingSgroup$-extensible, so we may take the periodic measure associated with $x$ (see \cite{bricenobustosdonoso1}).

Our last main result characterizes $\receivingSgroup$-extensibility of $S$-invariant measures in the symbolic context. In particular, it tells us that if $\receivingSgroup\not\simeq \freegroup_d$, there is a fully-supported Markov measure on $\mathcal{A}^S$---which is trivially topologically $\receivingSgroup$-extensible---that is not $\receivingSgroup$-extensible.

\fifththeorem*

Before proving Theorem \ref{thm:E}, we need the following general result, which in particular will allow us to translate $\freegroup_d$-extensibility to arbitrary realizations of the free $S$-group.

\begin{proposition}\label{prop:isomorphic_S_groups}
Let $S\acts X$ be a surjective continuous action, and let $\grmorph\colon\receivingSgroup\to\receivingSgroup'$ be an isomorphism between two receiving $S$-groups $\receivingSgroup = (G,\eta)$ and $\receivingSgroup' =(G',\eta')$. Then, there is a $\theta$-equivariant homeomorphism $\varphi\colon X_{\receivingSgroup'} \to X_{\receivingSgroup}$ such that $\pi \circ \varphi = \pi'$. Moreover, the push-forward $\varphi_*\colon \mathcal{M}_{G'}(X_{\receivingSgroup'})\to \mathcal{M}_G(X_\receivingSgroup)$ is a bijection and, in particular, $\textup{Ext}_{\G}(X,S) = \textup{Ext}_{\G'}(X,S)$.
\end{proposition}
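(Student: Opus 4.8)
The plan is to realize $\varphi$ as the relabeling of coordinates induced by $\theta$. Explicitly, for $(y_{h'})_{h'\in G'}\in X_{\receivingSgroup'}$ I set
$$\varphi\big((y_{h'})_{h'\in G'}\big)=(y_{\theta(h)})_{h\in G},$$
and, symmetrically, $\psi\big((x_h)_{h\in G}\big)=(x_{\theta^{-1}(h')})_{h'\in G'}$. First I would check that $\varphi$ maps $X_{\receivingSgroup'}$ into $X_{\receivingSgroup}$: writing $x_h=y_{\theta(h)}$, for all $s\in S$ and $h\in G$ the identity $\theta\circ\eta=\eta'$ together with the multiplicativity of $\theta$ give $s\cdot x_h=s\cdot y_{\theta(h)}=y_{\eta'(s)\theta(h)}=y_{\theta(\eta(s))\theta(h)}=y_{\theta(\eta(s)h)}=x_{\eta(s)h}$, which is exactly the defining condition of $X_{\receivingSgroup}$. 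The same computation run with $\theta^{-1}$ (using $\theta^{-1}\circ\eta'=\eta$) shows $\psi$ maps $X_{\receivingSgroup}$ into $X_{\receivingSgroup'}$, and $\psi\circ\varphi$ and $\varphi\circ\psi$ are seen at once to be the identities. Both $\varphi$ and $\psi$ are restrictions of coordinate relabelings of the product spaces $X^{G'}$ and $X^{G}$, hence continuous, so $\varphi$ is a homeomorphism with inverse $\psi$.

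Next I would verify the two compatibility identities. Since $\theta(1_G)=1_{G'}$, the $1_G$-coordinate of $\varphi(y)$ is $y_{1_{G'}}$, i.e.\ $\pi\circ\varphi=\pi'$. For $\theta$-equivariance, recalling the shift convention $(g\cdot x)_h=x_{hg}$, for $g\in G$ and $y\in X_{\receivingSgroup'}$ the $h$-coordinate of $\varphi(\theta(g)\cdot y)$ equals $(\theta(g)\cdot y)_{\theta(h)}=y_{\theta(h)\theta(g)}=y_{\theta(hg)}$, which is precisely the $h$-coordinate of $g\cdot\varphi(y)$; hence $\varphi(\theta(g)\cdot y)=g\cdot\varphi(y)$, i.e.\ $\varphi$ is $\theta$-equivariant in the sense of \S\ref{sec1}.

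For the measure-theoretic claim, since $\varphi$ is a homeomorphism the push-forward $\varphi_*\colon\mathcal{M}(X_{\receivingSgroup'})\to\mathcal{M}(X_{\receivingSgroup})$ is a bijection with inverse $\psi_*$. From $\theta$-equivariance one deduces $\varphi^{-1}(g^{-1}\bar A)=\theta(g)^{-1}\varphi^{-1}(\bar A)$ for all $g\in G$ and $\bar A\in\mathcal{B}(X_{\receivingSgroup})$, so if $\bar\mu\in\mathcal{M}_{G'}(X_{\receivingSgroup'})$ then $(\varphi_*\bar\mu)(g^{-1}\bar A)=\bar\mu(\theta(g)^{-1}\varphi^{-1}(\bar A))=\bar\mu(\varphi^{-1}(\bar A))=(\varphi_*\bar\mu)(\bar A)$, whence $\varphi_*\bar\mu\in\mathcal{M}_G(X_{\receivingSgroup})$. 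Because $\theta$ is onto, every element of $G'$ has the form $\theta(g)$, and the same argument applied to $\psi=\varphi^{-1}$ (which is $\theta^{-1}$-equivariant) gives the reverse inclusion; thus $\varphi_*$ restricts to a bijection $\mathcal{M}_{G'}(X_{\receivingSgroup'})\to\mathcal{M}_G(X_{\receivingSgroup})$. Finally, $\pi'=\pi\circ\varphi$ yields $\pi'_*=\pi_*\circ\varphi_*$, so $\Ext_{\receivingSgroup'}(X,S)=\im(\pi'_*)=\pi_*\big(\varphi_*(\mathcal{M}_{G'}(X_{\receivingSgroup'}))\big)=\pi_*\big(\mathcal{M}_G(X_{\receivingSgroup})\big)=\Ext_{\receivingSgroup}(X,S)$.

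I do not expect a genuine obstacle here: the statement is essentially the functoriality of the natural-extension construction under isomorphisms of receiving $S$-groups. The only delicate point is matching the left/right conventions of the shift action when checking $\theta$-equivariance (and, correspondingly, the identity $\varphi^{-1}(g^{-1}\bar A)=\theta(g)^{-1}\varphi^{-1}(\bar A)$); the rest is bookkeeping with $\theta\circ\eta=\eta'$ and the multiplicativity of $\theta$ and $\theta^{-1}$.
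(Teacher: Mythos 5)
Your proposal is correct and follows essentially the same route as the paper: the coordinate-relabeling map $(y_{h'})_{h'\in G'}\mapsto (y_{\theta(h)})_{h\in G}$, the same well-definedness computation via $\theta\circ\eta=\eta'$, the same equivariance check, and the induced push-forward bijection. Your treatment of the measure-theoretic part is somewhat more explicit than the paper's (which dismisses it as "readily checked"), but the argument is identical in substance.
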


\begin{proof}
Define $\varphi\colon X_{\receivingSgroup'}\to X_{\receivingSgroup}$ by
$$(x_{h'})_{h'\in G'}\mapsto (x_{\grmorph(h)})_{h\in G},$$
which is well-defined, since for all $s\in S$ and $(x_{h'})_{h'\in G'}\in X_{\receivingSgroup'}$ we have
    $$s\cdot x_{\grmorph(h)}=x_{\eta'(s)\grmorph(h)}=x_{\grmorph(\eta(s))\grmorph(h)}=x_{\grmorph(\eta(s)h)},$$
    so $(x_{\grmorph(h)})_{h\in G}\in X_\receivingSgroup$. Continuity of $\varphi$ is easily verified, since $X_{\receivingSgroup}$ and $X_{\receivingSgroup'}$ have the topology of pointwise convergence: for every $g\in G$ there is a unique $h\in G'$ with $\theta(g)=h$ and vice versa, and thus for any sequence $(x^{(n)})_n$ in $X_{\receivingSgroup}$, convergence of $x^{(n)}$ at the coordinate $h$ is equivalent to the convergence of $\varphi(x^{(n)})$ at coordinate $g$, so $\varphi$ maps convergent sequences to convergent sequences. Similarly, the map $X_{\receivingSgroup}\to X_{\receivingSgroup'}$ given by $(x_{h})_{h\in G}\mapsto (x_{\grmorph^{-1}(h')})_{h'\in G'}$ is well-defined, so it defines an inverse for $\varphi$, which is thus bijective. The map $\varphi^{-1}$ is, as well, continuous by the same argument as above; thus, $\varphi$ is a homeomorphism.
    Since $\grmorph(1_{G'})=1_G$, we have $\pi'=\pi\circ\varphi$ and $\pi=\pi'\circ \varphi^{-1}$. For any $g\in G$ and $(x_{h'})_{h'\in G}\in X_{\receivingSgroup'}$, we have
    $$\varphi(\theta(g)\cdot (x_{h'})_{h'\in G'})=\varphi((x_{h'\theta(g)})_{h'\in G'})=(x_{\theta(h)\theta(g)})_{h\in G}=g\cdot(x_{\theta(h)})_{h\in G}=g\cdot \varphi((x_{h})_{h\in G}),$$
    so $\varphi$ is $\theta$-equivariant.

    It is readily checked that the push-forward $\varphi_*\colon \mathcal{M}_{G'}(X_{\receivingSgroup'})\to \mathcal{M}_G(X_\receivingSgroup)$ is well-defined and satisfies $\pi_*\circ \varphi_*=\pi_*'$. The inverse of $\varphi_*$ is $(\varphi^{-1})_*$.
\end{proof}

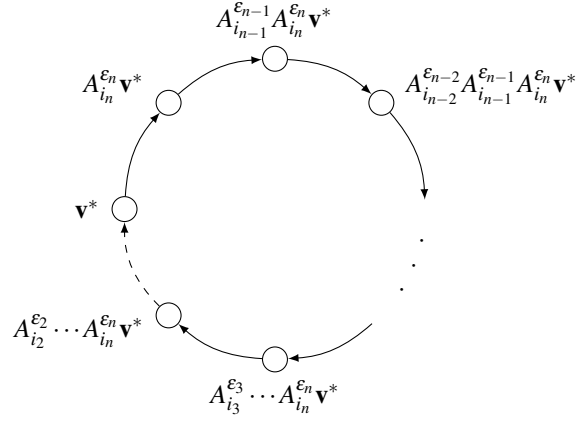
\begin{figure}[ht]
\centering

		\begin{tikzpicture}
			\node[shape=circle,draw] (N1) at (180:2cm) {};
			\node at (180:2.5cm) {$\mathbf{v^*}$};
			\node[shape=circle,draw] (N2) at (135:2cm){};
			\node[left] at (135:2.3cm) {$A_{i_n}^{\varepsilon_n}\mathbf{v^*}$};
			\node[shape=circle,draw] (N3) at (90:2cm) {};
			\node at (90:2.5cm) {$A_{i_{n-1}}^{\varepsilon_{n-1}}A_{i_n}^{\varepsilon_n}\mathbf{v^*}$};
			\node[shape=circle,draw] (N4) at (45:2cm) {};
			\node[right] at (45:2.3cm) {$A_{i_{n-2}}^{\varepsilon_{n-2}}A_{i_{n-1}}^{\varepsilon_{n-1}}A_{i_n}^{\varepsilon_n}\mathbf{v^*}$};
			\node (V1) at (0:2cm) {};
			\node (V2) at (-45:2cm) {};
			\node at (-12.5:2cm) {$\cdot$};
			\node at (-22.5:2cm) {$\cdot$};
			\node at (-32.5:2cm) {$\cdot$};
			\node[shape=circle,draw] (T1) at (-90:2cm) {};
			\node at (-90:2.5cm) {$A_{i_{3}}^{\varepsilon_{3}}\cdots A_{i_n}^{\varepsilon_n}\mathbf{v^*}$};
			\node[shape=circle,draw] (T2) at (-135:2cm) {};
			\node[left] at (-135:2.3cm) {$A_{i_{2}}^{\varepsilon_{2}}\cdots A_{i_n}^{\varepsilon_n}\mathbf{v^*}$};
			\draw[-latex, bend left=19] (N1) to (N2);
			\draw[-latex, bend left=19] (N2) to (N3);
			\draw[-latex, bend left=19] (N3) to (N4);
			\draw[-latex, bend left=19] (N4) to (V1);
			\draw[-latex, bend left=19] (V2) to (T1);
			\draw[-latex, bend left=19] (T1) to (T2);
			\draw[-latex,dashed, bend left=22.5] (T2) to (N1);
		\end{tikzpicture}

\caption{In this cycle in $G$, transitions corresponding to solid edges all have high probability. As $A_{i_1}^{\varepsilon_1}\cdots A_{i_n}^{\varepsilon_n}\mathbf{v^*}\ne\mathbf{v^*}$, the final transition is thus forced to be of very low probability.}
\label{figure:Baumslag_cycle}
\end{figure}

\begin{proof}[Proof of Theorem \ref{thm:E}]
It follows from Proposition \ref{prop:all_measures_are_extensible} together with Proposition \ref{prop:isomorphic_S_groups} that (i) implies (ii). That (ii) implies (iii) is direct. To see that (iii) implies (i), assume that (i) does not hold and let us show that (iii) does not hold either. Indeed, since $\receivingSgroup\not\simeq \freegroup_d$, there is a surjective morphism $\theta\colon \F_d\to G$ with non-trivial kernel and such that $\theta\circ \iota=\eta$, so we may choose an element $w\in\text{ker}(\theta)-\{ \varepsilon\}$. Consider a $2\times 2$ matrix representation of $\F_d$ generated by $d$ matrices $A_{1},\dots,A_{d}$. Write $w=A_{i_1}^{\varepsilon_1}\cdots A_{i_n}^{\varepsilon_n}$ for $n \geq 1$ and $\varepsilon_j \in \{-1,1\}$, $j = 1,\dots,n$, and choose a sufficiently large positive prime $p$ such that $A_{i_1}^{\varepsilon_1}\cdots A_{i_n}^{\varepsilon_n}\neq I_d \bmod p$. Let $\mathcal{A}=\Z_p^2$ and $0<\delta<1/(p^2-1)$. Define, for $A \in \Sigma$ and $\mathbf{u},\mathbf{v}\in \mathcal{A}$,
$$\mathbf{P}^{A}_{\mathbf{u},\mathbf{v}}=
\begin{cases}
    1-(p^2-1)\delta & \text{if }\mathbf{v}=A\mathbf{u} \bmod p,\\
    \delta & \text{otherwise}.
\end{cases}$$
and let $\mathbf{p}\in [0,1]^{p^2}$ be the uniform distribution upon $\mathcal{A}$, i.e., $\mathbf{p}_{\mathbf{u}}=1/p^2$ for all $\mathbf{u}\in \mathcal{A}$. Since $A$ is invertible and $p$ is prime, $A$ is invertible modulo $p$ as well. Then, for a fixed $\mathbf{v}\in \mathcal{A}$, there is a unique $\mathbf{u}\in \mathcal{A}$ such that $\mathbf{v}=A\mathbf{u} \bmod p$, so
$$(\mathbf{p}\mathbf{P}^{A})_{\mathbf{v}}=\sum_{\mathbf{u}\in \mathcal{A}}\mathbf{p}_{\mathbf{u}}\mathbf{P}^{A}_{\mathbf{u},\mathbf{v}}=\frac{(p^2-1)\delta}{p^2}+\frac{1-(p^2-1)\delta}{p^2}=\frac{1}{p^2},$$
yielding $\mathbf{p}\mathbf{P}^{A}=\mathbf{p}$. It is also readily checked that $\mathbf{P}^{A^{-1}}$ is the transpose of $\mathbf{P}^{A}$, which implies that $\mathbf{p}_{\mathbf{u}}\mathbf{P}^{A^{-1}}_{\mathbf{u},\mathbf{v}}=\mathbf{p}_{\mathbf{v}}\mathbf{P}^{A}_{\mathbf{v},\mathbf{u}}$ for all $\mathbf{u},\mathbf{v}\in \mathcal{A}$, since $\mathbf{p}$ is uniform. Thus, letting $\mathbf{P}=\{\mathbf{P}^{A}:A \in \Sigma\}$, the $(\mathbf{p},\mathbf{P})$-Markov measure $\mu$ on $\mathcal{A}^{S}$ is $S$-invariant by Proposition \ref{prop:georgii}. Since $\mathbf{p}>0$ and $\mathbf{P}^{A}>0$ for every $A\in \Sigma$, $\mu$ is fully supported.

Assume that there is a $\receivingSgroup$-extension $\bar{\mu}\in \mathcal{M}_G(\mathcal{A}^G)$ of $\mu$. If $\overline{x}\in \mathcal{A}^G$, $g\in G$, and $A \in \Sigma$, then 
\begin{align*}
\bar{\mu}\big{(}[\overline{x}\,;\{g,\eta(A)g\}]\big{)}  &   =\bar{\mu}\big{(}g^{-1}[g\cdot\overline{x}\,;\{1_G,\eta(A)\}]\big{)}    \\
&   =\bar{\mu}\big{(}[g\cdot\overline{x}\,;\{1_G,\eta(A)\}]\big{)}    \\
    &   =\mu\left(\left[\pi(g\cdot\overline{x})\,;\left\{\varepsilon,A\right\}\right]\right) \\
    &   =\mathbf{p}_{\hspace{1pt}\overline{x}(g)}\mathbf{P}^{A}_{\overline{x}(g),\overline{x}(\eta(A)g)},
\end{align*}
    as $\pi(g\cdot \overline{x})(t)=\overline{x}(tg)$ for all $t\in S$. Analogously, we have that
    $$\bar{\mu}\big{(}[\overline{x}\,;\{g,\eta(A)^{-1}g\}]\big{)}=\mathbf{p}_{\hspace{1pt}\overline{x}(\eta(A)^{-1}g)}\mathbf{P}^{A}_{\overline{x}(\eta(A)^{-1}g),\overline{x}(g)}.$$
    Since $A_{i_1}^{\varepsilon_1}\cdots A_{i_n}^{\varepsilon_n}\neq I_d \bmod p$, there exists $\mathbf{v^*}\in \mathcal{A}$ such that $A_{i_1}^{\varepsilon_{i_1}}\cdots A_{i_n}^{\varepsilon_{i_n}}\mathbf{v^*}\neq \mathbf{v^*} \bmod p$. Consider the finite subset $C=\{\eta(A_{i_k})^{\varepsilon_k}\cdots \eta(A_{i_n})^{\varepsilon_n}: 1\leq k\leq n\}$ of $G$. Since $\theta\circ \iota=\eta$, we have that $\eta(A_{i_1})^{\varepsilon_1}\cdots \eta(A_{i_n})^{\varepsilon_n}=\theta(w)=1_G$, so $C$ can be viewed as a cycle in $G$. Let $\tau\in \mathcal{A}^C$ be a configuration such that $\tau(1_G)=\mathbf{v}^*$, and assume that  $\tau(\eta(A_{i_k})^{\varepsilon_k}\cdots \eta(A_{i_n})^{\varepsilon_n})=A_{i_k}^{\varepsilon_k}\tau(\eta(A_{i_{k+1}})^{\varepsilon_{k+1}}\cdots \eta(A_{i_n})^{\varepsilon_n})$ for every $0\leq k< n$, where we consider $A_{i_0}=A_{i_n}$ and $\varepsilon_0=\varepsilon_n$. Iteratively, we see that 
    $$\mathbf{v^*}=\tau(1_G)=\tau(\eta(A_{i_1})^{\varepsilon_1}\cdots \eta(A_{i_n})^{\varepsilon_n})=A_{i_1}^{\varepsilon_1}\cdots A_{i_n}^{\varepsilon_n}\mathbf{v}^*\neq \mathbf{v}^*,$$
    a contradiction, so it must be the case that there exist $g\in C$ and $0\leq k<n$ such that $\eta(A_{i_k})^{\varepsilon_k}g\in C$ and $\tau(\eta(A_{i_k})^{\varepsilon_k}g)\neq A_{i_k}^{\varepsilon_k}\tau(g)$. Hence,
$$
\bar{\mu}([\tau;C]) \leq \bar{\mu}\big{(}[\tau\,;\{g,\eta(A_{i_k})^{\varepsilon_k}g\}]\big{)}=\mathbf{p}_{\tau(g)}\mathbf{P}^{A_{i_k}}_{\tau(g),\tau(\eta(A_{i_k})g)}
= \frac{\delta}{p^2}
$$
if $\varepsilon_k=1$, and similarly $\bar{\mu}([\tau;C])\leq \delta/p^2$ if $\varepsilon_k=-1$ as well. Therefore,
$$\frac{1}{p^2}=\mu([\mathbf{v^*}\,;\varepsilon])=\bar{\mu}([\mathbf{v^*}\,;1_G])
        = \sum_{\tau\in \mathcal{A}^{C}:\tau(1_G)=\mathbf{v^*}}\bar{\mu}([\tau;C])\leq (p^2)^{n-1}\frac{\delta}{p^2}=p^{2n-4}\delta,$$
        which leads to a contradiction if we take $\delta < \frac{1}{p^{2n-2}}$.

\end{proof}

\bibliographystyle{abbrv}
\bibliography{references}

\appendix

\section{Proof of Theorem \ref{thm:extensibility_reversible}}
Assume that $S$ is left reversible and bicancellative, and $\freeSgroup$ is the free $S$-group. Recall that $\freeSgroup$ is isomorphic to $\GrRightFrac{S}$,
the group of right fractions of $S$.
Define the preorder $\leq_S$ on $\Gamma$ by
$$g\leq_S h\iff hg^{-1}\in \gamma(S).$$
Denote by $\mathcal{F}(\Gamma)$ the collection of all finite subsets of $\Gamma$. We say $\leq_S$ is \textbf{downward directed} if for every $F \in \mathcal{F}(\Gamma)$ there is an $m\in \Gamma$ such that $m\leq_S t$ for all $g\in F$.

\begin{lemma}[{\cite[Lemma 2.22]{bricenobustosdonoso1}}]\label{directed_order}
The pre-order $\leq_S$ is downward directed. Equivalently, the subset $\gamma(S)$ is \textbf{thick}, that is, for every $F \in \mathcal{F}(\Gamma)$ there exists $g\in \Gamma$ such that $Fg\subseteq \gamma(S)$.
\end{lemma}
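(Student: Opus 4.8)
The plan is to prove the thickness formulation and obtain downward directedness by passing to inverses, since the two statements are formally dual. Indeed, if $g\in\Gamma$ satisfies $Fg\subseteq\gamma(S)$, then $m:=g^{-1}$ satisfies $hm^{-1}=hg\in\gamma(S)$ for every $h\in F$, i.e. $m\leq_S h$ for all $h\in F$; and conversely a downward bound $m$ yields the thickness witness $g=m^{-1}$. Hence it suffices to exhibit, for an arbitrary $F=\{h_1,\dots,h_n\}\in\mathcal{F}(\Gamma)$, a single element $g\in\Gamma$ with $h_ig\in\gamma(S)$ for all $i$.

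The first ingredient I would establish is a \emph{finite common right multiple} property for $S$: for any $b_1,\dots,b_n\in S$ the intersection $\bigcap_{i=1}^n b_iS$ is nonempty. This follows from left reversibility by induction on $n$. The case $n=1$ is trivial. Assuming $c\in\bigcap_{i=1}^{n-1}b_iS$, left reversibility provides a point $p\in cS\cap b_nS$; and since $c\in b_iS$ forces $cS\subseteq b_iS$ for each $i<n$ (from $c=b_ix$ we get $cs=b_ixs\in b_iS$), we obtain $p\in\bigcap_{i=1}^{n}b_iS$. Unwinding this, there is an element $c\in S$ together with $e_1,\dots,e_n\in S$ such that $c=b_ie_i$ for every $i$.

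The second ingredient is the group-of-right-fractions representation. Since $\freeSgroup\simeq\GrRightFrac{S}$, every element of $\Gamma$ is of the form $\gamma(a)\gamma(b)^{-1}$ with $a,b\in S$. Writing $h_i=\gamma(a_i)\gamma(b_i)^{-1}$, I would apply the common right multiple property to the denominators $b_1,\dots,b_n$ to get $c=b_ie_i$ for all $i$, and then set $g=\gamma(c)$. As $\gamma$ is a semigroup morphism, $\gamma(c)=\gamma(b_i)\gamma(e_i)$, so $\gamma(b_i)^{-1}\gamma(c)=\gamma(e_i)$, whence
$$
h_ig=\gamma(a_i)\gamma(b_i)^{-1}\gamma(c)=\gamma(a_i)\gamma(e_i)=\gamma(a_ie_i)\in\gamma(S)
$$
for every $i$. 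Thus $Fg\subseteq\gamma(S)$, as required, and the dual reformulation gives the downward directedness of $\leq_S$.

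There is no serious obstacle here once the right-fractions representation is in hand; the only step demanding a little care is promoting the pairwise common right multiples granted by left reversibility to a common right multiple of a finite family, which rests entirely on the elementary observation that $c\in bS$ implies $cS\subseteq bS$. Everything else is a direct computation in $\Gamma$ exploiting that $\gamma$ is a homomorphism.
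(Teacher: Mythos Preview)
Your proof is correct. The paper does not actually prove this lemma; it is quoted from \cite[Lemma 2.22]{bricenobustosdonoso1} without argument, and your approach---promoting pairwise common right multiples to finite ones and then clearing denominators via the right-fractions representation---is precisely the standard one.
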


Let $S\overset{\alpha}{\acts} X$ be a continuous action, and $\mu\in \mathcal{M}_S(X)$. Given $F \in \mathcal{F}(\Gamma)$, fix any lower bound $m_{F}$ for $F$ (i.e., such that $m_{F}\leq_S t$ for all $t\in F$). Note that, by definition, this means $s_{t,F}\in \gamma(S)$ for each $t\in F$, so that there is a unique element $s_{t,F}\in S$ with $\gamma(s_{t,F})=s_{t,F}$. Define the following collection of subsets of $\mathcal{B}(X^F)$:
$$\mathcal{C}_F=\left\{\prod_{t\in F}A_t:A_t\in \mathcal{B}(X)\text{ for every } t\in F\right\},$$ 
and the set function $\mu_F\colon\mathcal{C}_F\rightarrow[0,1]$ by
$$\mu_F\left(\prod_{t\in F}A_t\right)=\mu\left(\bigcap_{t\in F}(s_{t,F})^{-1}(A_t)\right).$$
Observe that, due to the $S$-invariance of $\mu$, the value of the function $\mu_F$ does not depend on the choice of $m_{F}$.

\begin{lemma}
The function $\mu_F$ extends to a finitely additive probability measure on the algebra of sets $\mathcal{A}_F$ generated by $\mathcal{C}_F$.
\end{lemma}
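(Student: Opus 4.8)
The plan is to exhibit $\mu_F$ as the restriction to $\mathcal{C}_F$ of a genuine Borel probability measure on the compact metric space $X^F$. Once this is done, restricting that measure to the algebra $\mathcal{A}_F$ generated by $\mathcal{C}_F$ immediately produces a finitely additive -- indeed countably additive -- probability measure on $\mathcal{A}_F$ that extends $\mu_F$, with nothing further to verify.

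Concretely, fix $F\in\mathcal{F}(\Gamma)$ together with a lower bound $m_F$ and the associated elements $s_{t,F}\in S$, $t\in F$, satisfying $\gamma(s_{t,F})=t\,m_F^{-1}$; these are unique since $S$ is bicancellative and left reversible, so $\gamma$ embeds $S$ into $\Gamma=\Gamma_S$. Define $\Psi_F\colon X\to X^F$ by $\Psi_F(x)=(s_{t,F}\cdot x)_{t\in F}=(\alpha_{s_{t,F}}(x))_{t\in F}$. Each coordinate of $\Psi_F$ is continuous because the action is continuous, and $X^F$ is again a compact metric space (finite product), so $\Psi_F$ is continuous and in particular Borel measurable; set $\nu_F:=(\Psi_F)_*\mu\in\mathcal{M}(X^F)$. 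For a rectangle $\prod_{t\in F}A_t\in\mathcal{C}_F$ one has $\Psi_F^{-1}\big(\prod_{t\in F}A_t\big)=\bigcap_{t\in F}\alpha_{s_{t,F}}^{-1}(A_t)=\bigcap_{t\in F}(s_{t,F})^{-1}(A_t)$, whence $\nu_F\big(\prod_{t\in F}A_t\big)=\mu\big(\bigcap_{t\in F}(s_{t,F})^{-1}(A_t)\big)=\mu_F\big(\prod_{t\in F}A_t\big)$. Thus $\nu_F$ agrees with $\mu_F$ on all of $\mathcal{C}_F$.

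To finish, observe that $X$ is second countable (being compact metric), so every rectangle with Borel sides is a Borel subset of $X^F$ and hence $\mathcal{A}_F\subseteq\mathcal{B}(X^F)$. The restriction $\nu_F|_{\mathcal{A}_F}$ is then a countably additive -- a fortiori finitely additive -- probability measure on $\mathcal{A}_F$, with $\nu_F(X^F)=\mu_F(X^F)=\mu(X)=1$, and it extends $\mu_F$ by the previous paragraph. The one point deserving a word is well-definedness: $\mu_F$ itself does not depend on the choice of $m_F$ -- this is the observation already recorded above, following from the $S$-invariance of $\mu$ and the uniqueness of the $s_{t,F}$ -- so any choice of $m_F$ yields a $\nu_F$ restricting to the same $\mu_F$ on $\mathcal{C}_F$.

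There is essentially no genuine obstacle here; the pushforward description is precisely what makes the statement transparent, since it bypasses verifying directly that $\mathcal{C}_F$ is a semiring (closed under intersections, with set differences of rectangles splitting into finite disjoint unions of rectangles) and that $\mu_F$ is consistently additive across overlapping rectangles. If one preferred an intrinsic argument, the alternative would be exactly to check those facts and then invoke the standard extension of an additive set function from a semiring to the generated algebra -- but the route through $(\Psi_F)_*\mu$ renders this unnecessary.
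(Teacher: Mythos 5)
Your proof is correct, and it takes a genuinely different route from the paper. The paper works intrinsically: it notes that $\mathcal{A}_F$ consists of finite disjoint unions of rectangles and verifies finite additivity by hand, comparing two partitions of a set in $\mathcal{A}_F$ via a common refinement and using the key observation that disjointness of rectangles forces disjointness of the corresponding preimages $\bigcap_{t\in F}(s_{t,F})^{-1}(A_t)$. You instead observe that the map $\Psi_F(x)=(s_{t,F}\cdot x)_{t\in F}$ is continuous, hence Borel, and that $\mu_F$ is precisely the restriction of the pushforward $(\Psi_F)_*\mu$ to the rectangles, since $\Psi_F^{-1}\bigl(\prod_{t\in F}A_t\bigr)=\bigcap_{t\in F}(s_{t,F})^{-1}(A_t)$. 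This is a clean and complete argument: a restriction of a countably additive Borel probability measure to a subalgebra is automatically a finitely additive probability measure, so nothing remains to check. What your approach buys is considerable: it would also make the paper's two subsequent lemmas (continuity at $\varnothing$ and $\sigma$-additivity, followed by Carath\'eodory's theorem) unnecessary, since $(\Psi_F)_*\mu$ is already a Borel measure on $X^F$ extending $\mu_F$, and uniqueness of that extension follows from the fact that $\mathcal{C}_F$ is a $\pi$-system generating $\mathcal{B}(X^F)$. What the paper's more laborious route buys is independence from the topological setting (it uses only that $\mu$ is a measure and each $s_{t,F}$ acts measurably), though since the ambient hypotheses already assume a continuous action on a compact metric space, that generality is not exploited. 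One minor point: you correctly read $\gamma(s_{t,F})=t\,m_F^{-1}$ where the paper's text has a typo, which is the right interpretation of $m_F\leq_S t$.
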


\begin{proof}
Since $\mathcal{A}_F$ consists of finite disjoint unions of elements of $\mathcal{C}_F$, we just need to check that, for any $C\in \mathcal{A}_F$ and finite partitions $\mathcal{P},\mathcal{Q}$ of $C$ by elements of $\mathcal{C}_F$,
$$\sum_{A\in \mathcal{P}}\mu_F(A)=\sum_{B\in \mathcal{Q}}\mu_F(B).$$

First, if $\mathcal{Q}$ is a refinement of $\mathcal{P}$, take any $A\in\mathcal{P}$ and write it as a union of a collection $\{B^{(i)}\}_{i=1}^{n}\subseteq \mathcal{Q}$: 
$$A=\prod_{t\in F}A_t=\bigsqcup_{i=1}^{n}B^{(i)},\quad B^{(i)}=\prod_{t\in F}B^{(i)}_t.$$ 
Since this union is disjoint, for $1\leq i<j\leq n$ there must be a $t_{ij}\in F$ such that $B_{t_{ij}}^{(i)}\cap B_{t_{ij}}^{(j)}=\varnothing$, which implies 
$$(s_{t_{ij},F})^{-1}\left(B_{t_{ij}}^{(i)}\right)\cap (s_{t_{ij},F})^{-1}\left(B_{t_{ij}}^{(j)}\right)=\varnothing.$$ In particular, this means 
$$\left[\bigcap_{t\in F}(s_{t,F})^{-1}\left(B_{t}^{(i)}\right)\right] \cap \left[\bigcap_{t\in F}(s_{t,F})^{-1}\left(B_{t}^{(j)}\right)\right]=\varnothing,$$ 
and therefore the collection $\left\{\bigcap_{t\in F}(s_{t,F})^{-1}\left(B_{t}^{(i)}\right):1\leq i\leq n\right\}$ is pairwise disjoint. Hence,
$$\mu\left(\bigsqcup_{i=1}^{n}\bigcap_{t\in F}(s_{t,F})^{-1}\left(B_{t}^{(i)}\right)\right)=\sum_{i=1}^{n}\mu_F\left(B^{(i)}\right).$$
Now, for every $1\leq i\leq n$ and $t\in F$, $B^{(i)}_t\subseteq A_t$, from where we obtain
$$\bigsqcup_{i=1}^{n}\bigcap_{t\in F}(s_{t,F})^{-1}\left(B_{t}^{(i)}\right)\subseteq \bigcap_{t\in F}(s_{t,F})^{-1}(A_{t}).$$
We want to see that this last inclusion is an equality. Take any $x\in \bigcap_{t\in F}(s_{t,F})^{-1}(A_{t})$. Then, the tuple $(s_{t,F}\cdot x)_{t\in F}$ belongs to $A$, which means it belongs to some $B^{(i)}$. Thus, 
$$x\in \bigcap_{t\in F}(s_{t,F})^{-1}\left(B^{(i)}_{t}\right),$$
and we get the desired opposite inclusion. Putting all together yields
$$\mu_F(A)=\mu\left(\bigcap_{t\in F}(s_{t,F})^{-1}(A_{t})\right)=\mu\left(\bigsqcup_{i=1}^{n}\bigcap_{t\in F}(s_{t,F})^{-1}\left(B_{t}^{(i)}\right)\right)=\sum_{i=1}^{n}\mu_F\big(B^{(i)}\big).$$
Finally, summing over $\mathcal{P}$:
$$\sum_{A\in\mathcal{P}}\mu_F(A)=\sum_{A\in \mathcal{P}}\sum_{\substack{B\in \mathcal{Q} \\ B\subseteq A}}\mu_F(B)=\sum_{B\in\mathcal{Q}}\mu_F(B).$$
The remaining case, where $\mathcal{Q} $ need not be a refinement of $\mathcal{P}$, follows by considering a refinement $\mathcal{P}\lor \mathcal{Q}$ of both $\mathcal{P}$ and $\mathcal{Q}$.
\end{proof}

We want to prove that $\mu_F$ is $\sigma$-additive on $\mathcal{A}_F$. We recall a result which will help us. 

\begin{lemma}
Let $\nu$ be a finite measure on an algebra $\mathcal{A}$ which is finitely additive and continuous at $\varnothing$. Then, $\nu$ is $\sigma$-additive in $\mathcal{A}$.
\end{lemma}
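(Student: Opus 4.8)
The plan is to deduce $\sigma$-additivity from continuity at $\varnothing$ by the standard ``tails'' argument. Recall that continuity at $\varnothing$ means: whenever $(B_N)_N$ is a decreasing sequence in $\mathcal{A}$ with $\bigcap_N B_N = \varnothing$, one has $\nu(B_N) \to 0$. So suppose $A = \bigsqcup_{n\ge 1} A_n$ is a countable disjoint union of sets $A_n \in \mathcal{A}$ whose union $A$ also belongs to $\mathcal{A}$; we must show $\nu(A) = \sum_{n\ge 1}\nu(A_n)$. First I would introduce the finite partial unions $C_N = \bigsqcup_{n=1}^N A_n$, which lie in $\mathcal{A}$ since an algebra is closed under finite unions, and the tails $B_N = A \setminus C_N = \bigsqcup_{n>N} A_n$, which lie in $\mathcal{A}$ because algebras are closed under set differences.

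Next I would verify that $(B_N)_N$ is a decreasing sequence with empty intersection: $B_{N+1} = B_N \setminus A_{N+1} \subseteq B_N$, and if $x \in \bigcap_N B_N$ then $x \in A$ yet $x \notin A_n$ for every $n$, contradicting $A = \bigsqcup_n A_n$. Hence $B_N \downarrow \varnothing$ inside $\mathcal{A}$, and continuity at $\varnothing$ gives $\nu(B_N) \to 0$ as $N \to \infty$.

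Finally, for each $N$ the decomposition $A = C_N \sqcup B_N$ is a finite disjoint union in $\mathcal{A}$, so finite additivity yields $\nu(A) = \sum_{n=1}^N \nu(A_n) + \nu(B_N)$. Letting $N \to \infty$ and invoking $\nu(B_N) \to 0$ shows that the partial sums $\sum_{n=1}^N \nu(A_n)$ converge to $\nu(A)$, i.e., $\nu(A) = \sum_{n\ge 1}\nu(A_n)$, which is precisely $\sigma$-additivity on $\mathcal{A}$.

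There is essentially no obstacle in this argument; the two hypotheses are used in minimal but essential ways. The hypothesis that the union $A$ itself lies in $\mathcal{A}$ (rather than in a $\sigma$-algebra, where it would be automatic) is exactly what guarantees $B_N = A \setminus C_N \in \mathcal{A}$, so that continuity at $\varnothing$ is applicable; and the finiteness of $\nu$ is what makes all the quantities $\nu(B_N)$ finite, so that the identity $\nu(B_N) = \nu(A) - \nu(C_N)$ — and hence the passage to the limit — is legitimate.
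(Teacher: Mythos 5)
Your argument is correct and complete: the tail sets $B_N = A \setminus \bigsqcup_{n=1}^N A_n$ stay in the algebra, decrease to $\varnothing$, and continuity at $\varnothing$ combined with finite additivity gives $\sigma$-additivity. The paper states this lemma without proof as a recalled standard fact, and yours is exactly the classical textbook argument it is implicitly invoking, so there is nothing to reconcile.
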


\begin{lemma}
The measure $\mu_F\colon\mathcal{A}_F\rightarrow[0,1]$ is continuous at $\varnothing$, hence $\sigma$-additive.
\end{lemma}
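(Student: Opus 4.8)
The plan is to realize $\mu_F$ as the restriction of a genuine Borel probability measure on $X^F$ and then appeal to the preceding lemma. Fix the lower bound $m_F$ and the elements $s_{t,F}\in S$ used in the construction, and consider the map
$$\Psi_F\colon X\longrightarrow X^F,\qquad \Psi_F(x)=(s_{t,F}\cdot x)_{t\in F},$$
which is continuous since each $\alpha_{s_{t,F}}\colon X\to X$ is continuous and $F$ is finite. Let $\nu_F=(\Psi_F)_*\mu\in \mathcal{M}(X^F)$ be the push-forward, a Borel probability measure on the compact metric space $X^F$. For a rectangle $\prod_{t\in F}A_t\in \mathcal{C}_F$ one has $\Psi_F^{-1}\!\left(\prod_{t\in F}A_t\right)=\bigcap_{t\in F}(s_{t,F})^{-1}(A_t)$, so $\nu_F$ and $\mu_F$ agree on $\mathcal{C}_F$.

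Since a finite product of Borel subsets of $X$ is a Borel subset of $X^F$, we have $\mathcal{C}_F\subseteq \mathcal{B}(X^F)$ and hence $\mathcal{A}_F\subseteq \mathcal{B}(X^F)$. Moreover, every element of $\mathcal{A}_F$ is a finite disjoint union of elements of $\mathcal{C}_F$, and both $\mu_F$ (by the previous lemma) and the restriction $\nu_F|_{\mathcal{A}_F}$ are finitely additive, so in fact $\mu_F=\nu_F|_{\mathcal{A}_F}$. Continuity at $\varnothing$ is then immediate: if $(C_n)_n$ is a decreasing sequence in $\mathcal{A}_F$ with $\bigcap_n C_n=\varnothing$, then, $\nu_F$ being $\sigma$-additive on $\mathcal{B}(X^F)$, we get $\mu_F(C_n)=\nu_F(C_n)\to \nu_F(\varnothing)=0$. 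By the lemma recalled just above, a finitely additive probability measure on an algebra that is continuous at $\varnothing$ is $\sigma$-additive, whence $\mu_F$ is $\sigma$-additive on $\mathcal{A}_F$.

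I do not anticipate a genuine difficulty here; the only point deserving a line of care is the identification $\mu_F=\nu_F|_{\mathcal{A}_F}$ on the whole algebra (not merely on the rectangles of $\mathcal{C}_F$), which rests on the well-definedness established in the previous lemma together with finite additivity. Should one prefer to avoid push-forwards, the same conclusion can be reached directly: given $C_n\downarrow\varnothing$ and $\varepsilon>0$, use inner regularity of $\mu$ on the compact metric space $X$ together with the $S$-invariance of $\mu$ to replace, for each $n$, every rectangle appearing in $C_n$ by one with closed sides (so that the corresponding subset of $X^F$ is compact), at the cost of at most $\varepsilon 2^{-n}$ in $\mu_F$-measure; the resulting decreasing sequence of nonempty compact subsets of $X^F$ would then have nonempty intersection by the finite intersection property, contradicting $\bigcap_n C_n=\varnothing$ unless $\mu_F(C_n)\to 0$.
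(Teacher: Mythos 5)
Your argument is correct, and it rests on exactly the same mechanism as the paper's proof: both reduce continuity at $\varnothing$ to continuity from above of the genuine measure $\mu$ on $X$, via the observation that $\mu_F(A)=\mu\bigl(\Psi_F^{-1}(A)\bigr)$ for $A\in\mathcal{A}_F$, where $\Psi_F(x)=(s_{t,F}\cdot x)_{t\in F}$. The difference is one of packaging: the paper never names the map $\Psi_F$ and instead unwinds the decomposition of each $A_n$ into rectangles, checks by hand that the corresponding unions of intersections of preimages decrease to $\varnothing$, and applies continuity of $\mu$; you instead introduce $\Psi_F$ explicitly, push $\mu$ forward to $\nu_F=(\Psi_F)_*\mu$ on $\mathcal{B}(X^F)$, and identify $\mu_F$ with $\nu_F|_{\mathcal{A}_F}$ using agreement on $\mathcal{C}_F$ plus finite additivity. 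Your formulation buys something beyond the lemma itself: since $\nu_F$ is already a Borel probability measure on $X^F$ extending $\mu_F$, the subsequent appeal to Carath\'eodory's Extension Theorem becomes redundant (only uniqueness of the extension is still needed, and that follows from $\mathcal{C}_F$ being a $\pi$-system generating $\mathcal{B}(X^F)$). The compactness-based alternative you sketch at the end would also work but is unnecessary given the push-forward argument.
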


\begin{proof}
Let $A_n\downarrow \varnothing$ in $\mathcal{A}_F$, that is, $A_n\supseteq A_{n+1}$ and $\bigcap_nA_n=\varnothing$. For each $n\in \N$ write
$$A_n=\bigsqcup_{i=1}^{k_n}\prod_{t\in F}A_{t}^{(i,n)}.$$
Observe that
\begin{align*}
    \mu_F(A_n)&=\sum_{i=1}^{k_n}\mu_F\left(\prod_{t\in F}A_{t}^{(i,n)}\right)
    = \sum_{i=1}^{k_n}\mu\left(\bigcap_{t\in F}(s_{t,F})^{-1}\left(A_{t}^{(i,n)}\right)\right)\\
    &= \mu\left(\bigsqcup_{i=1}^{k_n}\bigcap_{t\in F}(s_{t,F})^{-1}\left(A_{t}^{(i,n)}\right)\right).
\end{align*}
These last sets are decreasing in $n$. Suppose there exists an element
$$x\in \bigcap_n\left[\bigsqcup_{i=1}^{k_n}\bigcap_{t\in F}(s_{t,F})^{-1}\left(A_{t}^{(i,n)}\right)\right].$$
Then, for each $t\in F$ define $x_t:=s_{t,F}\cdot x$. We would have that for all $n\geq 1$ there is a $1\leq i_n\leq k_n$ such that $x_t\in A_{t}^{(i_n,n)}$ for all $t\in F$, meaning
$$(x_t)_{t\in F}\in \prod_{t\in F} A_t^{(i_n,n)}\subseteq\bigsqcup_{i=1}^{k_n}
\prod_{t\in F}A_{t}^{(i,n)}$$
for all $n\geq 1$, which contradicts the fact that $A_n\downarrow \varnothing$. Thus, the intersection was empty, and by continuity of $\mu$ we conclude
$$\lim_{n\to\infty}\mu_F(A_n)=\lim_{n\to\infty}\mu\left(\bigsqcup_{i=1}^{k_n}\bigcap_{t\in F}(s_{t,F})^{-1}\left(A_{t}^{(i,n)}\right)\right)=0.$$
\end{proof}

By applying Carathéodory's Extension Theorem, we obtain a unique extension of $\mu_F$ to the $\sigma$-algebra $\sigma(\mathcal{C}_F)=\mathcal{B}(X^F)$ generated by $\mathcal{C}_F$.

\begin{corollary}
Let $F \in \mathcal{F}(\Gamma)$ and $m_{F}\leq_S t$ for all $t\in F$. Then, there is a unique probability measure $\mu_F\colon\mathcal{B}(X^F)\rightarrow [0,1]$ such that
$$\mu_F\left(\prod_{t\in F}A_t\right)=\mu\left(\bigcap_{t\in F}(s_{t,F})^{-1}(A_t)\right)$$
for every $\prod_{t\in F}A_t\in \mathcal{B}(X^F)$.
\end{corollary}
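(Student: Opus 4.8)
The statement is a direct consequence of the three preceding lemmas together with Carath\'eodory's extension theorem, so the plan is simply to assemble these ingredients. First I would observe that $\mathcal{C}_F$ is a $\pi$-system on $X^F$: the intersection of two product sets $\prod_{t\in F}A_t$ and $\prod_{t\in F}B_t$ is again a product set, namely $\prod_{t\in F}(A_t\cap B_t)$. Moreover, the $\sigma$-algebra generated by $\mathcal{C}_F$ is the full Borel $\sigma$-algebra $\mathcal{B}(X^F)$. This holds because $X$ is a compact metric space, hence second countable, so the finite product $X^F$ is second countable as well, and its Borel $\sigma$-algebra coincides with the product $\sigma$-algebra $\bigotimes_{t\in F}\mathcal{B}(X)=\sigma(\mathcal{C}_F)$. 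Consequently, the algebra $\mathcal{A}_F$ generated by $\mathcal{C}_F$ satisfies $\sigma(\mathcal{A}_F)=\mathcal{B}(X^F)$.

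Next, the preceding lemmas establish precisely that $\mu_F\colon\mathcal{A}_F\to[0,1]$ is a well-defined set function (independent of the choice of lower bound $m_F$ by the $S$-invariance of $\mu$), finitely additive, with total mass $\mu_F(X^F)=\mu(X)=1$, and continuous at $\varnothing$, hence $\sigma$-additive on the algebra $\mathcal{A}_F$. Carath\'eodory's extension theorem then produces a probability measure on $\sigma(\mathcal{A}_F)=\mathcal{B}(X^F)$ extending $\mu_F$, which I would continue to denote by $\mu_F$. On each product set $\prod_{t\in F}A_t\in\mathcal{C}_F$ the value of this extension is the original one, namely $\mu\bigl(\bigcap_{t\in F}(s_{t,F})^{-1}(A_t)\bigr)$, which is the asserted identity.

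For uniqueness, suppose $\mu_F$ and $\mu_F'$ are two probability measures on $\mathcal{B}(X^F)$ satisfying the displayed formula. They then agree on the $\pi$-system $\mathcal{C}_F$, which contains $X^F$ and generates $\mathcal{B}(X^F)$, so by Dynkin's $\pi$–$\lambda$ theorem they coincide on all of $\mathcal{B}(X^F)$. I do not anticipate any genuine obstacle here: the substantive content (finite additivity and continuity at $\varnothing$) was already carried out in the lemmas, and the only points deserving a word of justification are the identification $\sigma(\mathcal{C}_F)=\mathcal{B}(X^F)$ and the standard uniqueness argument.
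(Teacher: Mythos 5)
Your proposal is correct and follows essentially the same route as the paper, which likewise assembles the preceding lemmas (finite additivity on $\mathcal{A}_F$ and continuity at $\varnothing$, hence $\sigma$-additivity) and invokes Carath\'eodory's extension theorem. The extra details you supply—that $\sigma(\mathcal{C}_F)=\mathcal{B}(X^F)$ because $X^F$ is second countable, and the $\pi$--$\lambda$ argument for uniqueness—are sound and merely make explicit what the paper leaves implicit.
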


We want to extend this collection of measures to a measure on $X^{\Gamma}$, via Kolmogorov's Extension Theorem. If $F\subseteq K\subseteq \Gamma$, we define $\pi_{F}^{K}\colon X^{K}\rightarrow X^{F}$ as the canonical projection, and we omit the super-index if $K=\Gamma$. Recall that a family of measures $\{\nu_F\colon\mathcal{B}(X^F)\rightarrow[0,1]\mid F\in \mathcal{F}(\Gamma)\}$ is called \textbf{consistent} if whenever $F\subseteq K\in\mathcal{F}(\Gamma)$, we have $(\pi^K_F)_*\mu_K=\mu_F$.

\begin{lemma}
The family of probability measures $\{\mu_F:F\in\mathcal{F}(\Gamma)\}$ is consistent.
\end{lemma}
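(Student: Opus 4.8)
The plan is to reduce the identity $(\pi^K_F)_*\mu_K = \mu_F$ to a check on a generating $\pi$-system. The collection $\mathcal{C}_F$ is closed under finite intersections, since $\prod_{t\in F}A_t \cap \prod_{t\in F}A_t' = \prod_{t\in F}(A_t\cap A_t')$; it contains $X^F$; and it generates $\mathcal{B}(X^F)$ — recall that $X$ is a compact metric space, hence second countable, so the Borel $\sigma$-algebra of the finite product $X^F$ coincides with the product $\sigma$-algebra $\bigotimes_{t\in F}\mathcal{B}(X)$. Since $(\pi^K_F)_*\mu_K$ and $\mu_F$ are both probability measures on $\mathcal{B}(X^F)$, Dynkin's $\pi$-$\lambda$ theorem reduces the claim to verifying that they agree on every set of the form $\prod_{t\in F}A_t$ with $A_t\in\mathcal{B}(X)$.

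Fix $F\subseteq K$ in $\mathcal{F}(\Gamma)$ and such a set $\prod_{t\in F}A_t$. First I would note that $(\pi^K_F)^{-1}\!\left(\prod_{t\in F}A_t\right)=\prod_{t\in K}B_t$, where $B_t=A_t$ for $t\in F$ and $B_t=X$ for $t\in K\setminus F$, and that this set lies in $\mathcal{C}_K$, so that $\mu_K$ is already defined on it. The key observation is that the value of $\mu_F$ on a product set does not depend on the choice of lower bound $m_F$ of $F$ (this was already recorded, and follows from the $S$-invariance of $\mu$), and likewise for $\mu_K$; hence I am free to evaluate both with a common lower bound. Using that $\leq_S$ is downward directed (Lemma \ref{directed_order}), I would pick a single $m\in\Gamma$ with $m\leq_S t$ for all $t\in K$; since $F\subseteq K$, the same $m$ serves as a lower bound for $F$. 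With this choice, for every $t\in F$ the elements $s_{t,K}$ and $s_{t,F}$ coincide, as both are the unique element of $S$ sent by the injective map $\gamma$ to $tm^{-1}$; and for $t\in K\setminus F$ one has $(s_{t,K})^{-1}(X)=X$. Therefore
\begin{align*}
(\pi^K_F)_*\mu_K\!\left(\prod_{t\in F}A_t\right)
&= \mu_K\!\left(\prod_{t\in K}B_t\right)
= \mu\!\left(\bigcap_{t\in K}(s_{t,K})^{-1}(B_t)\right) \\
&= \mu\!\left(\bigcap_{t\in F}(s_{t,F})^{-1}(A_t)\right)
= \mu_F\!\left(\prod_{t\in F}A_t\right).
\end{align*}

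This settles the equality on $\mathcal{C}_F$, and the $\pi$-$\lambda$ argument then concludes the proof. The only point requiring attention is the appeal to independence from the lower bound, so I would state and use it cleanly: once $\mu_K$ and $\mu_F$ are both evaluated with the same $m$, their defining expressions literally share the same semigroup elements $s_{t,\cdot}$ on the coordinates indexed by $F$, which is what makes the middle equality above work. I do not expect a genuine obstacle here; all of the real content is carried by the downward-directedness of $\leq_S$, which is already available, and by the reduction to a generating $\pi$-system.
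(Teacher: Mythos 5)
Your proof is correct and follows essentially the same route as the paper's: both evaluate the two measures on the product cylinders $\prod_{t\in F}A_t$ using a common lower bound for $K$ (which automatically bounds $F$), and then pass to all of $\mathcal{B}(X^F)$ by a standard uniqueness-of-measures argument on the generating class. Your explicit appeal to the $\pi$-$\lambda$ theorem and to the independence of $\mu_F$ from the choice of lower bound merely makes precise what the paper states more tersely.
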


\begin{proof}
Let $F\subseteq K\in \mathcal{F}(\Gamma)$. Then, a lower bound $m_K$ for $K$ is a lower bound for $F$ as well. Let $\prod_{t\in F}A_t$ be an arbitrary element of $\mathcal{C}_F$, and define, for $g\in K-F$, $A_g:=X$. Then,

 \begin{align*}
    \left(\pi_{F}^{K}\right)_{*}\mu_{K}\left(\prod_{t\in F}A_t\right)&=\mu_K\left(\prod_{g\in K}A_g\right)=\mu\left(\bigcap_{g\in K}(s_{g,K})^{-1}(A_g)\right)\\
    &=\mu\left(\bigcap_{t\in F}(s_{t,K})^{-1}(A_t)\right)=\mu_{F}\left(\prod_{t\in F}A_t\right).
\end{align*}

Now, the sets in $\mathcal{B}(X^F)$ which satisfy the formula $\left(\pi_{F}^{K}\right)_{*}\mu_K=\mu_F$ form a $\sigma$-algebra, which implies the result for all sets in $\mathcal{B}(X^F)$.
\end{proof}

By Kolmogorov's Extension Theorem, we obtain a unique probability measure $\bar{\mu}_\Gamma$ on $\mathcal{B}(X^{\Gamma})$ satisfying the condition $\mu_F=\left(\pi_{F}^{\Gamma}\right)_{*}\bar{\mu}_{\Gamma}$ for every finite subset $F\subseteq \Gamma$. This allows to establish the following result.

\begin{proposition}\label{reversible_extensions}
The measure $\bar{\mu}_\Gamma$ is $\Gamma$-invariant, and we have $\bar{\mu}_\Gamma(X_{\freeSgroup})=1$. Therefore, $\mu_\Gamma:=\bar{\mu}_\Gamma|_{\mathcal{B}(X_{\freeSgroup})}$ is a $\Gamma$-invariant probability measure satisfying $\pi_*\mu_\Gamma=\mu$.
\end{proposition}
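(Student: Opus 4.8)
The plan is to establish the three assertions of Proposition~\ref{reversible_extensions} — that $\bar\mu_\Gamma$ is $\Gamma$-invariant, that $\bar\mu_\Gamma(X_{\freeSgroup})=1$, and that $\pi_*\mu_\Gamma=\mu$ — by working throughout with the finite-dimensional marginals $\mu_F$, which already determine $\bar\mu_\Gamma$ via Kolmogorov's theorem. The basic reduction is that the measurable rectangles $\pi_F^{-1}\!\big(\prod_{t\in F}A_t\big)$, with $F\in\mathcal F(\Gamma)$ and $A_t\in\mathcal B(X)$, form a $\pi$-system generating $\mathcal B(X^\Gamma)$ (here $\Gamma$ is countable and $X$ is metrizable), so any two probability measures on $X^\Gamma$ agreeing on all such sets coincide; hence all identities below only need to be checked on rectangles.

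For $\Gamma$-invariance, fix $g\in\Gamma$ and a rectangle $\bar A=\pi_F^{-1}\!\big(\prod_{t\in F}A_t\big)$; we must show $\bar\mu_\Gamma(g^{-1}\bar A)=\bar\mu_\Gamma(\bar A)$, where $g^{-1}\bar A=\{\bar x:g\cdot\bar x\in\bar A\}$. Since the shift satisfies $(g\cdot\bar x)_t=\bar x_{tg}$, the set $g^{-1}\bar A$ is again a rectangle, now supported on the coordinate set $Fg=\{tg:t\in F\}$, and $\bar\mu_\Gamma(g^{-1}\bar A)=\mu_{Fg}\!\big(\prod_{s\in Fg}A_{sg^{-1}}\big)$. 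The crucial algebraic observation is that if $m'$ is a lower bound for $Fg$ — which exists by Lemma~\ref{directed_order} — then $m'g^{-1}$ is a lower bound for $F$, because $t(m'g^{-1})^{-1}=tg(m')^{-1}\in\gamma(S)$ for every $t\in F$. Evaluating $\mu_F$ with the lower bound $m'g^{-1}$ and $\mu_{Fg}$ with the lower bound $m'$, injectivity of $\gamma$ forces $s_{t,F}=s_{tg,Fg}$ for each $t\in F$, so both quantities equal $\mu\!\big(\bigcap_{t\in F}(s_{t,F})^{-1}(A_t)\big)=\bar\mu_\Gamma(\bar A)$. Hence $\bar\mu_\Gamma$ is $\Gamma$-invariant.

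Next, to prove $\bar\mu_\Gamma(X_{\freeSgroup})=1$, write $X_{\freeSgroup}=\bigcap_{(s,h)\in S\times\Gamma}E_{s,h}$, where $E_{s,h}=\{(x_k)_k\in X^\Gamma:\alpha_s(x_h)=x_{\gamma(s)h}\}$ is closed; since $S$ and $\Gamma$ are countable it suffices to show $\bar\mu_\Gamma(E_{s,h})=1$ for each pair. Taking $F=\{h,\gamma(s)h\}$, we have $E_{s,h}=\pi_F^{-1}(\Delta_s)$ with $\Delta_s=\{(u,v)\in X\times X:v=\alpha_s(u)\}$, so $\bar\mu_\Gamma(E_{s,h})=\mu_F(\Delta_s)$. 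Choosing a lower bound $m$ for $F$ and writing $s_0=s_{h,F}$, injectivity of $\gamma$ together with $\gamma(s)\gamma(s_0)=\gamma(ss_0)$ gives $s_{\gamma(s)h,F}=ss_0$; using the action identity $\alpha_{ss_0}=\alpha_s\circ\alpha_{s_0}$ and the $S$-invariance of $\mu$, one computes $\mu_F(A\times B)=\mu\!\big(A\cap\alpha_s^{-1}(B)\big)$ for all $A,B\in\mathcal B(X)$, i.e. $\mu_F$ is the push-forward of $\mu$ under $u\mapsto(u,\alpha_s(u))$. This measure is concentrated on $\Delta_s$, whence $\mu_F(\Delta_s)=1$, and the countable intersection has full measure. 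Since $X_{\freeSgroup}$ is closed and $\Gamma$-invariant and receives full $\bar\mu_\Gamma$-mass, its trace $\mu_\Gamma=\bar\mu_\Gamma|_{\mathcal B(X_{\freeSgroup})}$ is a $\Gamma$-invariant probability measure on $X_{\freeSgroup}$.

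Finally, $\pi^{-1}(A)=X_{\freeSgroup}\cap\pi_{1_\Gamma}^{-1}(A)$, so $\mu_\Gamma(\pi^{-1}A)=\bar\mu_\Gamma(\pi_{1_\Gamma}^{-1}A)=\mu_{\{1_\Gamma\}}(A)$; picking a lower bound $m$ for $\{1_\Gamma\}$ and the element $s_0\in S$ with $\gamma(s_0)=m^{-1}$, the $S$-invariance of $\mu$ yields $\mu_{\{1_\Gamma\}}(A)=\mu(\alpha_{s_0}^{-1}(A))=\mu(A)$, so $\pi_*\mu_\Gamma=\mu$. Since the reverse inclusion $\Ext_{\freeSgroup}(X,S)\subseteq\mathcal M_S(X)$ is automatic, this also completes the proof of Theorem~\ref{thm:extensibility_reversible}. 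I expect the only genuinely delicate point to be the reindexing in the invariance step — verifying that $m'g^{-1}$ is a legitimate lower bound for $F$ and that the corresponding elements $s_{t,F}$ coincide with $s_{tg,Fg}$; the remaining two assertions are routine once the two-coordinate marginal of $\bar\mu_\Gamma$ has been identified as a graph measure.
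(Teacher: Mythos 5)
Your proposal is correct, and its overall architecture coincides with the paper's: verify $\Gamma$-invariance on cylinders via the interplay between lower bounds for $F$ and for $Fg$, write $X_{\freeSgroup}$ as a countable intersection of closed compatibility sets and show each has full measure, then restrict and push forward. The invariance step is the same computation run in the opposite direction (the paper transports a lower bound $m_F$ for $F$ to the lower bound $m_Fg$ for $Fg$, whereas you pull a lower bound $m'$ for $Fg$ back to $m'g^{-1}$ for $F$); both are legitimate because, as the paper notes, $\mu_F$ does not depend on the choice of lower bound, and your identification $s_{t,F}=s_{tg,Fg}$ via injectivity of $\gamma$ is exactly what makes the two marginals match. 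Where you genuinely diverge is the full-measure step: the paper covers the open complement of each set $A_{s,h}=\{(x_k)_k : s\cdot x_h=x_{\gamma(s)h}\}$ by countably many cylinders and shows each such cylinder is $\mu_{F_n}$-null because $(s_{h,F_n})^{-1}(C_h)\cap(s_{\gamma(s)h,F_n})^{-1}(C_{\gamma(s)h})=\varnothing$; you instead identify the two-coordinate marginal $\mu_{\{h,\gamma(s)h\}}$ as the push-forward of $\mu$ under $u\mapsto(u,\alpha_s(u))$ (agreeing on the generating $\pi$-system of rectangles, using $s_{\gamma(s)h,F}=ss_0$ and $S$-invariance) and conclude that the closed graph $\Delta_s$ carries full mass. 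Your route is cleaner and more conceptual, at the mild cost of invoking that $\mathcal{B}(X\times X)$ is the product $\sigma$-algebra (harmless here since $X$ is compact metric); the paper's route avoids naming the marginal explicitly but requires the small argument that any cylinder inside the complement must have disjoint fibers over the two relevant coordinates. Both close the argument identically with $\pi_*\mu_\Gamma=\mu$ via the one-point marginal at $1_\Gamma$.
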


\begin{proof}
To show $\Gamma$-invariance of $\bar{\mu}_\Gamma$, let $g\in \Gamma$ and define $\nu=g_{*}\bar{\mu}_\Gamma$. It suffices to check that, for a finite subset $F\subseteq \Gamma$, $\mu_F=\left(\pi_{F}^{\Gamma}\right)_{*}\nu$ on cylinders, so that by the uniqueness granted by Kolmogorov's Extension Theorem we get $\nu=\overline{\mu}_\Gamma$. 

It can be easily verified that, if $m_{F}$ is a lower bound for $F$, then so is $m_{F}g$ for $Fg$. Let $\prod_{t\in F}A_t$ be any element of $\mathcal{C}_F$, and define $A_g=X$ for $g\in \Gamma-F$. We obtain the following.
\begin{align*}
    \left(\pi_{F}^{G}\right)_{*}\nu\left(\prod_{t\in F}A_t\right)&=\bar{\mu}_\Gamma\left(g^{-1}\prod_{t\in \Gamma}A_t \right)= \bar{\mu}_\Gamma\left(\prod_{t\in Fg}A_{tg^{-1}}\right)= \mu_{Fg}\left(\prod_{t\in Fg}A_{tg^{-1}}\right)\\
    &=\mu\left(\bigcap_{t\in Fg}(s_{tg^{-1},F})^{-1}A_{tg^{-1}}\right)= \mu\left(\bigcap_{t\in F}(s_{t,F})^{-1}A_{t}\right)= \mu_F\left(\prod_{t\in F}A_t\right).
\end{align*}

Now we prove $X_G$ has full measure. We already know $X_{\freeSgroup}$ is closed, thus measurable. As it can be written as
\begin{align*}
    X_{\freeSgroup}&=\bigcap_{t\in G}\bigcap_{s\in S}\left\{(x_t)_{t\in \Gamma}\in X^\Gamma:s\cdot x_t=x_{\gamma(s)t}\right\},
\end{align*}
defining, for each $s\in S$ and $t\in \Gamma$, $A_{s,t}=\{(x_t)_{t\in \Gamma}\in X^{\Gamma}:s\cdot x_t=x_{\gamma(s)t}\}$ (which is also closed, by continuity of $S\acts X$ and of the coordinate projections), it suffices to check that each $A_{s,t}$ has full measure. To see this, fix $s\in S$, $t\in \Gamma$. The set $X^\Gamma-A_{s,t}$ is open, and can hence be written as a countable union of cylinders of $X^\Gamma$:
$$X^\Gamma-A_{s,t}=\bigcup_n\prod_{g\in \Gamma}C_g^{(n)},$$
where for each $n$, $C^{(n)}_g=X$ if $g\in \Gamma-F_n$. We must have that $C_t^{(n)}\cap s^{-1}\big(C_{\gamma(s)t}^{(n)}\big)=\varnothing$ (in particular $t,\gamma(s)t\in F_n$ for every $n\geq 1$), and therefore, for all $n\geq 1$,
$$\big(s_{t,F_n}\big)^{-1}\left[C_t^{(n)}\cap s^{-1}\big(C_{\gamma(s)t}^{(n)}\big)\right]=\big(s_{t,F_n}\big)^{-1}\left(C_t^{(n)}\right)\cap \big(s_{\gamma(s)t,F_n}\big)^{-1}\big(C_{\gamma(s)t}^{(n)}\big)=\varnothing.$$
This last fact directly implies that
\begin{align*}
    \bar{\mu}_\Gamma\left(X_{\freeSgroup}-A_{s,t}\right)\leq \sum_n\mu_{F_n}\left(\prod_{h\in F_n}C_h^{(n)}\right)=\sum_n\mu\left(\bigcap_{h\in F_n}(s_{h,F_n})^{-1}\left(C_h^{(n)}\right)\right)=0,
\end{align*}
obtaining $\bar{\mu}_\Gamma(A_{s,t})=1$, as desired.

Finally, let $\mu_\Gamma:=\bar{\mu}_\Gamma|_{\mathcal{B}(X_{\freeSgroup})}$. This measure is $\Gamma$-invariant, as $\mu_\Gamma(A)=\bar{\mu}_\Gamma(A)$ for every $A\in \mathcal{B}(X_{\freeSgroup})$. Since $(\pi_F^\Gamma)_*\bar{\mu}_\Gamma=\mu_F$ and $\pi^\Gamma_{1_\Gamma}(\overline{x})=\pi(\overline{x})$ for every $\overline{x}\in X_{\freeSgroup}$, we have
$$\mu_\Gamma(\pi^{-1}(A))=\mu_\Gamma\big{(}[\pi^\Gamma_{1_\Gamma}]^{-1}(A)\cap X_{\freeSgroup}\big{)}=\overline{\mu}_\Gamma\big{(}[\pi^\Gamma_{1_\Gamma}]^{-1}(A)\big{)}=\mu(A)$$
for all $A\in \mathcal{B}(X_{\freeSgroup})$, i.e., $\pi_*\mu_\Gamma=\mu$.
\end{proof}

Theorem \ref{thm:extensibility_reversible} follows directly from Proposition \ref{reversible_extensions}.

\end{document}